\title{Global unique solutions with instantaneous loss of regularity for SQG with fractional diffusion}
\author{Diego C\'ordoba\footnote{dcg@icmat.es}\quad and Luis Mart\'inez-Zoroa\footnote{luis.martinez@icmat.es}\\ \\ \small Instituto de Ciencias Matem\'aticas CSIC-UAM-UCM-UC3M }
\newtheorem{theorem}{Theorem}[section]
\newtheorem{corollary}{Corollary}[theorem]
\newtheorem{lemma}[theorem]{Lemma}
\newtheorem{remark}{Remark}
\def\p{\partial}
\begin{document}
\maketitle

\begin{abstract}
    In this work we construct global unique solutions of the dissipative Surface quasi-geostrophic equation ($\alpha$-SQG) that lose regularity instantly when there is  super-critical fractional diffusion.
    
\end{abstract}

\section{Introduction}
The inviscid Surface Quasi-Geostrophic (SQG) equation is a significant active scalar model with various applications in atmospheric modeling \cite{P}, owing to its similarities with the 3D incompressible Euler equations (see \cite{CMT}). In this work we consider the initial value problem for the dissipative 2D  Surface Quasi-geostrophic equations ($\alpha$-SQG)
in the space-time domain $\mathds{R}^2 \times \mathds{R}_+$ which has the following form
\begin{eqnarray}\label{SQG}
\frac{\partial w}{\partial t} + v\cdot\nabla w + \Lambda^{\alpha} w  = 0\quad\quad \alpha\in(0,2]\\
v = (- \frac{\partial \psi}{\partial x_2}, \frac{\partial \psi}{\partial x_1}), \quad \psi = \Lambda^{-1} w \nonumber
\end{eqnarray}
where we denote $\Lambda^{\alpha} f\equiv (-\Delta)^{\frac{\alpha}{2}} f$ by the Fourier transform $\widehat{\Lambda^{\alpha} f}(\xi) = |\xi|^{\alpha}\widehat{f} (\xi)$.   Here the function $w= w(x,t)$ represents the potential temperature in a rapidly rotating and stratified flow driven by an incompressible velocity $v$. The velocity field can be written as $v=(-\mathcal{R}_{2} w, \mathcal{R}_{1} w)$,
where $\mathcal{R}_{i}$ are the Riesz transforms in 2 dimensions, with the integral expression
$$\mathcal{R}_{j} w(x,t)=\frac{\Gamma (3/2)}{\pi^{3/2}}P.V. \int_{\mathds{R}^2} \frac{(x_{j}-y_{j})w(y,t)}{|x-y|^{3}}dy_{1}dy_{2} $$
for $j=1,2.$


The equation (\ref{SQG}) has been extensively studied since its introduction in \cite{R}. In that work, the global existence of weak solutions in $L^2$ (finite energy) was demonstrated for $0 \leq \alpha \leq 2$. Further research on global existence of weak solutions in other spaces can be found in \cite{Ma} and \cite{BG}. However, it should be noted that weak solutions are not unique below a certain regularity threshold \cite{BSV}.

The equation's scaling leads to three regimes to consider: sub-critical ($1<\alpha\leq 2$), critical ($\alpha=1$), and super-critical ($0<\alpha<1$). The global existence of unique smooth solutions in the sub-critical case has been established in \cite{CW1}, while the global well-posedness for the critical case with $\alpha=1$    has been shown in \cite{KNV},  \cite{CV} and \cite{CVi}  using different techniques  (see also \cite{KN}, \cite{DKSV} and \cite{CTV}).



\subsection{Regularity in the Super-critical regime $\alpha\in(0,1)$}

The problem of global regularity in the supercritical regime remains unresolved, despite the existence of  eventual regularity results in \cite{D}, \cite{CZV}, \cite{K}, \cite{LX}, \cite{SGS} and \cite{Si}. The local well-posedness  has been established  for large data in $H^s$ for $s\geq 2-\alpha$ (see \cite{M}) and for a number of functional spaces global well-posedness is present for small data (see \cite{CC}, \cite{ChL}, \cite{ChMZ}, \cite{CW2} \cite{Do}, \cite{DoLi}, \cite{HK}, \cite{Ju}, \cite{M}, \cite{SGS}, \cite{W1}, \cite{W2}, \cite{W3} and \cite{XZ}). In the case of large initial data global existence as $\alpha\rightarrow 1^-$  is shown in \cite{CZV}  (see also \cite{CH}). Additionally, there is a corresponding instant parabolic smoothing effect for sufficiently regular initial data \cite{ConstantinWu}, \cite{CW2},  \cite{Bi}, \cite{Do} and \cite{DoLi}.

Recently in \cite{CH}, a bound is obtained on  the dimension of the spacetime singular set of the suitable weak solutions of (\ref{SQG}) for a range of $\alpha$'s in the super-critical regime.

\subsection{Main result}
Our main result is to construct global unique solutions of (\ref{SQG}) that loses regularity instantly in the  super-critical regime.
\begin{theorem}
Given $\epsilon>0$, $\alpha\in(0,1)$, $\beta\in (1,2-\alpha)$, there exist initial conditions $w_{0}(x)$ with $||w_{0}||_{H^{\beta}}\leq \epsilon$ such that there exists a unique solution $w(x,t)$ to \eqref{SQG} with $w(x,t)\in L_{t}^{\infty}H^{1}_{x}$. This solution is global and smooth for any $t>0$ and it fulfils

    $$\text{lim}_{n\rightarrow\infty}||w(x,t_{n})||_{H^{\beta}}=\infty$$
    for some sequence of times $(t_{n})_{n\in \mathds{N}}$ that tends to zero.
\end{theorem}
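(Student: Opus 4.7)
The strategy I would adopt is a multi-scale construction: build the initial datum as a superposition $w_{0}=\Psi+\sum_{k\geq 1}a_{k}\phi_{k}$ of a smooth, slowly varying background $\Psi$ and a lacunary family of high-frequency perturbations $\phi_{k}$, each concentrated on a tiny spatial patch at frequency $N_{k}$, with the patches pairwise disjoint. Choose the amplitudes so that $w_{0}$ just barely lies in $H^{\beta}$ with $\|w_{0}\|_{H^{\beta}}\leq\epsilon$, but fails to lie in $H^{2-\alpha}$; since $\beta<2-\alpha$, this is possible, for instance by taking $a_{k}\sim N_{k}^{-\sigma}$ with $\beta<\sigma<2-\alpha$ and $N_{k}\to\infty$ super-exponentially. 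The $k$-th perturbation will generate a transient $H^{\beta}$ burst on the short time scale $t_{k}\sim N_{k}^{-\alpha}$ set by the fractional dissipation, and the super-exponential spacing of the $N_{k}$ locates these bursts along a sequence $t_{k}\to 0^{+}$.

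My first task is to establish existence, uniqueness and instantaneous smoothing in $L^{\infty}_{t}H^{1}_{x}$. Since $\beta>1$ gives $H^{\beta}\hookrightarrow H^{1}$, the initial datum has finite energy and finite $H^{1}$ norm; a Galerkin/viscosity approximation together with standard commutator estimates produces a solution in $L_{t}^{\infty}H_{x}^{1}$. The parabolic smoothing effect for supercritical $\alpha$-SQG recorded in \cite{ConstantinWu},\cite{CW2},\cite{Bi},\cite{Do},\cite{DoLi} upgrades this to $w\in C^{\infty}$ for every $t>0$; subcritical well-posedness (\cite{CW1}) then gives global smoothness. For uniqueness in $L^{\infty}_{t}H^{1}_{x}$ I would use a weak--strong stability argument: the difference of two solutions satisfies an $L^{2}$ energy inequality in which the transport term is absorbed by $\Lambda^{\alpha}$ once one of the two solutions is smooth, and smoothness for $t>0$ propagates uniqueness back to $t=0$.

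With well-posedness in place, I would compare the true solution to a pseudo-solution $\widetilde w=\Psi(t)+\sum_{k}\widetilde\phi_{k}(t)$, where $\Psi(t)$ is the $\alpha$-SQG evolution of $\Psi$ and each $\widetilde\phi_{k}$ solves the linear advection--diffusion equation $\p_{t}\widetilde\phi_{k}+v_{\Psi}\cdot\nabla\widetilde\phi_{k}+\Lambda^{\alpha}\widetilde\phi_{k}=0$ with datum $a_{k}\phi_{k}$. A microlocal/WKB analysis shows that $\widetilde\phi_{k}$ stays concentrated near the characteristic trajectory through its initial patch, while its Fourier support is stretched along the unstable direction of the strain tensor of $v_{\Psi}$. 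By arranging $\Psi$ and the locations so that this strain is nondegenerate on each patch, the $H^{\beta}$ norm of $\widetilde\phi_{k}(t)$ can be driven up to any prescribed level $M_{k}$ before the dissipation at frequency $\sim N_{k}$ extinguishes the packet at time $t_{k}\sim N_{k}^{-\alpha}$, forcing $\|\widetilde w(t_{k})\|_{H^{\beta}}\geq M_{k}$.

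The final step is to transfer the blow-up from the pseudo-solution to the true solution by controlling the error $e=w-\widetilde w$. Because the patches are spatially separated and the $\phi_{k}$ sit at sharply different frequencies, cross-interactions between distinct packets, and between the packets and $\Psi$, are of lower order and can be absorbed by the fractional dissipation in an $L^{2}$ energy identity; the nonlinear self-interaction of each packet is made small by the choice of $a_{k}$. A Gronwall estimate then yields $\|e(t)\|_{H^{\beta}}\ll M_{k}$ on $[0,t_{k}]$, so the peak of $\widetilde w$ is inherited by $w$. The main obstacle I anticipate is precisely this error estimate: the hierarchy $(N_{k},x_{k},a_{k})$ must be tuned so that for every $k$ simultaneously the sum of all pairwise interaction errors is dominated by the individual peaks, which forces a very rapid lacunarity of the $N_{k}$ and exploits the strict inequality $\beta<2-\alpha$ at every stage of the construction.
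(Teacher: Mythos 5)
Your high-level strategy matches the paper's: build a fast-growing pseudo-solution out of a background plus a high-frequency angular perturbation, check that the true solution tracks it, and then glue countably many such episodes at separated spatial locations to get loss of regularity along a sequence $t_{n}\to 0$. The heuristic balance between the shearing time and the dissipative time, and the resulting necessity of $\beta<2-\alpha$, is also correctly identified. However, there are two substantive gaps.

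First, your pseudo-solution keeps the \emph{full} nonlocal operator $\Lambda^{\alpha}$ in the equation $\p_{t}\widetilde\phi_{k}+v_{\Psi}\cdot\nabla\widetilde\phi_{k}+\Lambda^{\alpha}\widetilde\phi_{k}=0$ and appeals to a microlocal/WKB description without details. The paper explicitly observes that retaining the nonlocal dissipation already produces an equation that cannot be solved precisely enough, and the central technical device of the whole argument is to \emph{replace} $\Lambda^{\alpha}$ (and $v$) by local, pointwise-multiplicative approximations $\bar{\Lambda}^{\alpha}$, $\bar v$ that act on the WKB-type ansatz $f(r)\cos(N(\theta+g(r))+p(r))$ by multiplication by $K_{\alpha}^{-1}|N^{2}/r^{2}+N^{2}g'(r)^{2}|^{\alpha/2}$. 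Lemma~\ref{-alpha} and Corollary~\ref{+alpha} quantify the error of this replacement in $H^{s}$ and are what make the simplified equations \emph{explicitly} solvable, producing the closed formula for $\bar{w}_{pert}$ in \eqref{wpertdef}. Without an analogue of this local surrogate, the claim that the perturbation's $H^{\beta}$ norm can be made to reach an arbitrary target $M_{k}$ before being extinguished has no quantitative backing: one needs precise control of the competition between the Gaussian-type damping factor $e^{-G(r,t)}$ (which depends on the phase gradient $\partial_{r}\Theta$) and the shearing, and this is exactly what the explicit solution of the approximated equation delivers.

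Second, the uniqueness argument is not correct as stated. In the supercritical regime the term $v(W)\cdot\nabla w_{\infty}$ cannot be ``absorbed by $\Lambda^{\alpha}$'': an $L^{2}$ energy estimate gives a Gronwall factor $\exp\!\big(\int_{t_{0}}^{t}\|\nabla w_{\infty}\|_{L^{\infty}}\,ds\big)$, and by construction $\|w_{\infty}(t)\|_{C^{1}}$ \emph{blows up} as $t\to 0$. The paper's Step~3 handles this by combining the smallness $\|W(t_{0})\|_{L^{2}}\le Ct_{0}\|W\|_{H^{1}}$ (since $W(0)=0$ and $W\in L^{\infty}_{t}H^{1}_{x}$) with a \emph{quantitative} a priori bound $\|w_{\infty}(t)\|_{C^{1}}\le C+S_{j_{0}(t)}$, where $S_{j}$ sums the $H^{6}$ constants of the first $j$ blocks; the sequence $c_{j}$ is chosen precisely so that $c_{j}\ge j\,e^{S_{j-1}+1}$, which makes $t_{0}\,e^{\int_{t_{0}}^{t}\|w_{\infty}\|_{C^{1}}}\to 0$ as $t_{0}\to 0$. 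In other words, uniqueness in $L^{\infty}_{t}H^{1}_{x}$ is not a generic weak--strong statement here; it only holds because the growth rate of $\|w_{\infty}\|_{C^{1}}$ near $t=0$ is tuned to be slow enough during the gluing, and this tuning is an essential part of the construction that your sketch omits. (A smaller point of architecture: the paper does not use a single shared background $\Psi$ but gives each block $w_{c_{j}}$ its own rescaled radial background, translated far away via $T_{R_{j}}$ so that the blocks interact only through an $H^{5}$-small forcing term; this makes the iterated error control in Step~1 of the paper's proof cleaner than a genuinely coupled multi-scale ansatz.)
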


\begin{remark}
One expects that, as $\alpha$ becomes bigger, instant loss of regularity should become harder and, in fact, if we consider $L^{\infty}$ initial conditions the result obtained in \cite{LX} (see also \cite{L})    shows that, in the critical case $\alpha=1$, for $s\in(0,1)$, there exists at least one local weak solution that does not lose regularity, which suggests that there might not be instant loss of regularity for $L^{\infty}$ functions in the case $\alpha\geq 1$. This is also supported by the global existence results for $\alpha=1$ \cite{CV}, \cite{KNV} and \cite{CVi}.  
\end{remark}

\begin{remark}
The growth around the origin is at least logarithmic, i.e., there is an exponent $a>0$ such that

$$\text{lim}_{n\rightarrow\infty}\frac{||w(x,t_{n})||_{H^{\beta}}}{|\ln(t_{n})|^{a}}>0.$$
 We will however omit the proof of this fact in order to obtain a more readable paper.

\end{remark}

\begin{remark}
    The solution $w(x,t)$ converges to the initial conditions $w(x,0)$ in the space $C^{3}_{x}(B_{J}(0))$ for any $J$ as $t$ tends to $0$. This is a trivial consequence of \eqref{convc3trunc}.
\end{remark}

\subsection{Strategy of the proof}

The motivation for studying this problem comes from the work \cite{Zoroacordoba} where the authors show instant loss of regularity in the inviscid case (see also \cite{Injee} for a different proof). This is done in two steps. First we construct a pseudo-solution $\bar{w}= \bar{w}(x,t)$ to SQG i.e. a solution to the following equation
$$\frac{\partial \bar{w}}{\partial t} + v_{1}(\bar{w})\frac{\partial \bar{w}}{\partial x_{1}}+ v_{2}(\bar{w})\frac{\partial \bar{w}}{\partial x_{2}}+F(x,t)=0$$
$$v_{1}(\bar{w})=-\frac{\partial}{\partial x_{2}}(-\Delta)^{-1/2} \bar{w}=-\mathcal{R}_{2}\bar{w}$$
$$v_{2}(\bar{w})=\frac{\partial}{\partial x_{1}}(-\Delta)^{-1/2} \bar{w}=\mathcal{R}_{1}\bar{w}$$
that exhibits norm inflation, with $F(x,t)$ small and  with enough regularity.  This allows us to show that the unique classical solution $w=w(x,t)$ to SQG with the same initial data, $\bar{w}(x,0)=w(x,0),$ also exhibits norm inflation. 

The second step consists in using a gluing argument to combine an infinite number of these rapidly growing solutions to obtain loss of regularity. This result suggest that at least for small diffusion we expect to prove a similar result for the equation $\alpha$-SQG (\ref{SQG}).

In order to obtain norm inflation in $H^s$ with diffusion we consider similar (but more general) initial conditions in polar coordinates $(r,\theta)$ as in \cite{Zoroacordoba}, namely
$$w(r,\theta,0)\equiv w_{rad}(r,0)+w_{pert}(r,\theta,0)= \frac{f(\lambda r)}{\lambda^{\beta-1}}+g(\lambda r)\frac{\cos(N\theta)}{N^{\beta}\lambda^{s-1}}$$
where $N\in \mathds{N},\lambda\in\mathds{R}$ are parameters to be fixed later and $f,g$ are smooth radial functions. 

To obtain  a reliable pseudo-solution,  we aim to find an approximation of $\alpha$-SQG that is simple enough to be solved explicitly, and the pseudo-solution will be obtained by solving the simplified evolution equation.  This simplified evolution equation needs to be precise enough that the pseudo-solutions stay close to the actual solutions to $\alpha$-SQG. When fractional dissipation is absent and $\lambda$ is sufficiently large, we can rely on the following equations

$$\frac{\partial}{\partial t}w_{rad}=0$$
$$\frac{\partial}{\partial t}w_{pert}+v(w_{rad})\cdot\nabla w_{pert}=0$$
to obtain a pseudo-solution that grows rapidly in time, and taking $N$ big makes this pseudo-solution a good approximation of SQG. Unfortunately, this ansatz for the evolution  would completely ignore the diffusion in the case $\alpha>0$, which would make the pseudo-solution a very poor approximation of $\alpha$-SQG. On the other hand, including the fractional diffusion in our simplified evolution equation already produces an equation that is too complicated for our purposes, and in particular it is hard to deal with the non-locality of $\Lambda^{\alpha}$. Before we explain in some detail how to circumvent this, we will study what we call the naive pseudo-solution:

$$\bar{w}_{naive}(r,\theta,t)=e^{-C\lambda^{\alpha}t}\frac{g(\lambda r)}{\lambda^{s-1}}+e^{-C(N\lambda)^{\alpha}t}f(\lambda r)\frac{\cos(N(\theta-\frac{v_{\theta}g(\lambda r)}{r}\int_{0}^{t}e^{-C\lambda^{\alpha}s}ds))}{N^{s}\lambda^{s-1}}$$
with $v_{\theta}(g(\lambda r))=\hat{\theta}\cdot v(g(\lambda r))$, which is obtained by using the (fully local) approximation 

$$\Lambda^{\alpha}w_{rad}\approx C\lambda^{\alpha}w_{rad},$$
$$\Lambda^{\alpha}w_{pert}\approx C(\lambda N)^{\alpha} w_{pert}.$$

This ansatz (which is NOT a good approximation of $\alpha$-SQG) actually gives some basic ideas of what the behaviour for the real solution is going to be. Namely, we see that the characteristic time for the decay of $w_{pert}$ is $(\lambda N)^{-\alpha}$, while the "deformation time" (i.e., the time it would take for $||w_{pert}||_{H^\beta}$ to grow in the absence of diffusion) is of order $\lambda^{-2+\beta}$, which suggests considering $(\lambda N)^{-\alpha}\approx \lambda^{-2+\beta}$, so that the smoothing effects and the deformation effects have roughly the same strength. Note, in particular, that this already suggests that we can only have instantaneous loss of regularity if $\beta< 2-\alpha$, which is consistent with the fact that there is local well posedness in $H^{\beta}$ for $\beta\geq 2-\alpha.$

However $w_{naive}$ is not the right approximation, so to actually include the diffusion in our simplified evolution equation, we will compute $\bar{\Lambda}^{\alpha}$, a local approximation of the diffusion, as well as $\bar{v}$, a local approximation of the velocity operator , to obtain the final version of our simplified equations

$$\frac{\partial}{\partial t} \bar{w}_{rad}(r,t)+\Lambda^{\alpha}\bar{w}_{rad}(r,t)=0$$

$$\frac{\partial \bar{w}_{pert}}{\partial t}+v(\bar{w}_{rad}(r,t))\cdot\nabla(\bar{w}_{pert})+\bar{v}(\bar{w}_{pert})\cdot \nabla \bar{w}_{rad}(r,t)+\bar{\Lambda}^{\alpha}(\bar{w}_{pert})=0,$$
which can be solved explicitly. This pseudo-solution, which depends on $\lambda$, $N$, $\beta$ and $\alpha$, grows very rapidly in $H^{\beta}$ as long as $N$ and $\lambda$ are chosen correctly and $\beta<2-\alpha$. Furthermore, if $\beta>1$ (and again, $N$ and $\lambda$ chosen correctly), the pseudo-solution is a good approximation of $\alpha$-SQG for all time $t>0$. A gluing argument then allows us to combine an infinite number of these rapidly growing solutions to obtain the desired instant loss of regularity.

The approach of using pseudo-solutions to obtain information about norm inflation and loss of regularity has been used for inviscid SQG \cite{Zoroacordoba}, the 2D incompressible Euler equations \cite{CMZO} and for generalized SQG \cite{CMZ2}, but this is the first work where an equation with dissipation is considered.

\subsection{Outline of the paper}
This paper is structured as follows. In Section 2, we introduce the basic notation that will be utilized throughout the paper and derive necessary technical bounds to approximate the diffusion operator and to find and control the pseudo-solution. In Section 3, we present the pseudo-solution and analyze its essential properties. In Section 4, we demonstrate how a gluing argument can be employed to construct a global unique in time solution, despite a loss of regularity.

\section{Technical lemmas}

\subsection{Notation and preliminaries}

When a constant depends on several parameters (such as $\alpha$, $\beta$, and $\gamma$), we will use the notation $C_{\alpha,\beta,\gamma}$ to indicate this dependence in this paper.
We will, however, omit the sub-index if the parameter has been fixed at the time.

For many lemmas it will be necessary to work in polar coordinates, i.e., we will consider the change of coordinates

$$x_{1}=r\cos(\theta), x_{2}=r\sin(\theta).$$

Furthermore, if we call $F_{polar}$ the function that gives us the change of coordinates from polar to cartesian coordinates, for some function $f(x)$ we will use the abuse of notation

$$f(r,\alpha)=f(F_{polar}(r,\alpha)).$$

For $s\geq 0$, we will consider the $H^{s}$ norms, which we will define as

$$||f||_{H^{s}}=||f||_{L^2}+||\Lambda^{s}f||_{L^2},$$
and sometimes we will use the fact that, for $s$ an integer

$$||f||_{H^{s}}\approx \sum_{j=0}^{s}\sum_{i=0}^{j}||\frac{\partial^{j}}{\partial x_{1}^{i}\partial x_{2}^{j-i}}f||_{L^2}.$$

Finally, we will sometimes consider the homogeneous Sobolev norms, defined as

$$||f||_{\dot{H}^{s}}=||\Lambda^{s}f||_{L^2}.$$

\subsection{Approximations for the fractional diffusion}

As we mentioned in the introduction, in order to obtain  an appropriate pseudo-solution, we need an approximation for the diffusion operator that is easier to work with, in particular we would like a local approximation for the operator. Doing this directly for $\Lambda^{\alpha}$ posses some difficulties due to the lack of integrability of the kernel of $\Lambda^{\alpha}$, so we will first approximate $\Lambda^{-\alpha}$ and then use that to obtain information about $\Lambda^{\alpha}$.

\begin{lemma}\label{-alpha}
For any fixed parameters $\alpha\in(0,1]$, $P,\epsilon>0$ there exists $N_{0}$ such that if $N>N_{0}$, then for any functions $f(r)$, $g(r)$ and $p(r)$ fulfilling $\text{supp}f(r)\subset(\frac12,\frac32)$ and

$$||g||_{C^5}\leq (\ln{(N)})^{P}, \ ||f(r)||_{C^{5}}\leq  (\ln{(N)})^{P} ||f||_{L^{\infty}},||p(r)||_{C^{5}}\leq (\ln{(N)})^{P}$$
then if we define
$$w(r,\theta):=f(r)\cos(N(\theta+g(r))+p(r))$$
we have that for $\beta\in[0,3]$ there exist constants $K_{\alpha}>0$ and $C_{\epsilon,\alpha,P}$ such that 
$$||\Lambda^{-\alpha}w(r,\theta)-K_{\alpha}\frac{w(r,\theta)}{|(\frac{N}{r})^2+(N)^2g'(r)^2|^{\alpha/2}}||_{H^{\beta}}\leq C_{\epsilon,\alpha,P} N^{-1-\alpha+\epsilon+\beta}||f||_{L^{\infty}}.$$

Furthermore if we have $f(r),g(r),p(r)$ with $\text{supp} f(r)\subset(\frac{1}{2\lambda},\frac{3}{2\lambda})$ for some $\lambda\geq 1$ and such that we have

$$||g(\frac{r}{\lambda})||_{C^5}\leq (\ln{(N)})^{P}, \ ||f(\frac{r}{\lambda})||_{C^{5}}\leq  (\ln{(N)})^{P} ||f(\frac{r}{\lambda})||_{L^{\infty}},||p(\frac{r}{\lambda})||_{C^{5}}\leq  (\ln{(N)})^{P}$$
then for $\beta\in[0,3]$ there exist constants $K_{\alpha}>0$ and $C_{\epsilon,\alpha,P}$ such that 
$$||\Lambda^{-\alpha}w(r,\theta)-K_{\alpha}\frac{w(r,\theta)}{|(\frac{N}{r})^2+(N)^2g'(r)^2|^{\alpha/2}}||_{H^{\beta}}\leq C_{\epsilon,\alpha,P,M} \lambda^{\beta-\alpha-1} N^{-1-\alpha+\epsilon+\beta}||f||_{L^{\infty}}.$$

\end{lemma}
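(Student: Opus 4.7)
The identity is a principal-symbol computation for the Fourier multiplier $\Lambda^{-\alpha}$ applied to the WKB-type (Lagrangian) distribution $w(y)=\mathrm{Re}\bigl(f(r_{y})e^{i\phi(y)}\bigr)$ with phase $\phi(y)=N\theta_{y}+Ng(r_{y})+p(r_{y})$, whose local wavevector $\nabla\phi=(N/r)\hat\theta+(Ng'(r)+p'(r))\hat r$ has magnitude $\sqrt{(N/r)^{2}+N^{2}g'(r)^{2}}+O((\ln N)^{P}/N)$. My plan is to start from the integral kernel
\[
\Lambda^{-\alpha}w(x)=c_{\alpha}\!\int\frac{w(y)\,dy}{|x-y|^{2-\alpha}}\qquad(\alpha\in(0,2),\ d=2),
\]
split the integration at a dyadic distance $\delta=N^{-1+\epsilon'}$ around $x$, perform a Taylor/stationary-phase expansion on the inner piece, and integrate by parts in the outer piece.

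\textbf{Main term via stationary phase.} On $|x-y|\le\delta$, Taylor-expand $\phi(y)=\phi(x)+\nabla\phi(x)\cdot(y-x)+R(x,y)$ with $|R|\lesssim N(\ln N)^{P}|y-x|^{2}$ and $f(r_{y})=f(r_{x})+O(|y-x|(\ln N)^{P}\|f\|_{\infty})$. Freezing coefficients at $x$ and extending the inner integral to $\mathds{R}^{2}$ (the tail being absorbed in the far-field estimate) gives
\[
c_{\alpha}f(r_{x})e^{i\phi(x)}\int_{\mathds{R}^{2}}\frac{e^{i\nabla\phi(x)\cdot z}}{|z|^{2-\alpha}}\,dz=K_{\alpha}\frac{f(r_{x})e^{i\phi(x)}}{|\nabla\phi(x)|^{\alpha}},
\]
via the classical identity $\mathcal{F}(|\cdot|^{-(2-\alpha)})(\xi)=K_{\alpha}|\xi|^{-\alpha}$. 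Taking real parts and replacing $|\nabla\phi(x)|^{\alpha}$ by $|(N/r)^{2}+N^{2}g'(r)^{2}|^{\alpha/2}$ (an $O((\ln N)^{P}N^{-1})$ relative error, controlled because $|\nabla\phi|\gtrsim N/r$) yields the claimed main term.

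\textbf{Error estimates and scaling.} On $|x-y|>\delta$, integration by parts with $L=(\nabla\phi)/(i|\nabla\phi|^{2})\cdot\nabla$, for which $Le^{i\phi}=e^{i\phi}$, gains a factor $(N\delta)^{-1}=N^{-\epsilon'}$ per step at the cost of $(\ln N)^{P}$ derivative losses, so iterating to order $M$ drives the far-field to $O(N^{-M})$. The near-field Taylor remainder is dominated by $\int_{|z|\le\delta}N(\ln N)^{P}|z|^{2}|z|^{-(2-\alpha)}\,dz\lesssim(\ln N)^{P}N\delta^{\alpha+2}$, which is $\lesssim N^{-1-\alpha+\epsilon}\|f\|_{L^{\infty}}$ for $\epsilon'<\epsilon/(\alpha+2)$. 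For the $H^{\beta}$-bound I would differentiate the whole expansion up to $\lceil\beta\rceil$ times --- each $\partial_{x}$ on $e^{i\phi(x)}$ costs $|\nabla\phi|\lesssim N$ while acting on the symbol $|\nabla\phi(x)|^{-\alpha}$ only costs logs --- and then interpolate against a crude $L^{2}$ bound using the compact support of $w$. The second statement follows from the first by the dilation $\bar w(x)=w(x/\lambda)$: the stated hypotheses transform precisely into those of the first part, and combining $\Lambda^{-\alpha}w(x)=\lambda^{-\alpha}\Lambda^{-\alpha}\bar w(\lambda x)$ with the Sobolev scaling $\|u(\lambda\cdot)\|_{\dot H^{\beta}}=\lambda^{\beta-1}\|u\|_{\dot H^{\beta}}$ produces exactly the extra factor $\lambda^{\beta-\alpha-1}$.

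\textbf{Main obstacle.} The delicate point is the $H^{\beta}$-bound: I need to verify that each derivative acting on the error costs at most a single factor of $N$ (and not $N^{1+\sigma}$ for some $\sigma>0$), and that the symbol $|\nabla\phi|^{-\alpha}$ really contributes only logarithmic factors when differentiated. This relies on both the $\epsilon$-slack in the exponent and the observation that $p'(r)$ is lower order in $N$ than $Ng'(r)$, so it perturbs the leading symbol only at the $(\ln N)^{P}/N$ level and never destabilises the integration-by-parts bookkeeping.
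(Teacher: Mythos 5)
Your proposal is correct, and it organizes the computation differently from the paper. The paper works entirely in polar coordinates, performing the integration by parts only in the angular variable $\tilde\theta$ --- exploiting the fact that the fast oscillation $\cos(N\tilde\theta)$ lives in a single one-dimensional variable while the $h$-dependence varies on the slow scale --- and then identifies the leading constant $K_\alpha$ by an explicit polar change of variables, with positivity coming from the elementary computation $\int_0^\infty \cos(R)\,R^{\alpha-1}\,dR>0$. You instead adopt the full WKB/stationary-phase viewpoint in Cartesian coordinates: treat $w$ as a Lagrangian distribution with phase $\phi$, integrate by parts in the far field with the non-stationary vector field $L=\nabla\phi/(i|\nabla\phi|^2)\cdot\nabla$, and identify the main term via the frozen-coefficient Fourier identity $\mathcal{F}(|\cdot|^{-(2-\alpha)})=K_\alpha|\cdot|^{-\alpha}$, then absorb $p'$ into the error since $|\nabla\phi|^2-(N/r)^2-N^2g'(r)^2=O(N(\ln N)^{2P})$ is relatively small. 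The two routes are substantively equivalent: your $L$ restricted to the $\hat\theta$-direction is the paper's $\tilde\theta$-IBP, and your frozen integral written in polar coordinates is the paper's limit $H^*$. Your near-field Taylor-remainder estimate and your $\lambda$-scaling argument reproduce the paper's bounds exactly. What each buys: the paper's polar decomposition is more elementary, keeps the one-dimensional oscillatory structure explicit, and so transfers painlessly to the closely related Lemma \ref{vr} and Corollary \ref{vcuad}; your Cartesian/microlocal bookkeeping is more systematic and would generalize more readily to other multipliers. Three places you would need to tighten in a full write-up: (a) the passage from the $\delta$-truncated frozen integral to $\mathds{R}^2$ is not absolutely convergent --- the tail must be controlled by the same constant-coefficient IBP you use in the far field, which you should state rather than wave at; (b) the $L^2$ control on the region away from $\mathrm{supp}\,w$ (the paper's $I_2$) does come out of your far-field IBP together with kernel decay, but this should be recorded since it is needed for $\alpha$ close to $1$; and (c) for the $H^3$ part the clean route (and the paper's) is to note $\Lambda^{-\alpha}$ commutes with $\partial^3$, apply the $L^2$ estimate to $\partial^3 w$ (now a finite sum of modes $N,N\pm1,\dots,N\pm3$ in $\theta$), and estimate the remaining Leibniz terms from differentiating the symbol $|(N/r)^2+N^2g'(r)^2|^{-\alpha/2}$, each of which costs only logs over an $N^{-\alpha}$ prefactor, rather than differentiating the whole expansion directly.
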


\begin{proof}

We will just consider $P=1$ for simplicity since the proof is the same for other values of $P$.
We start by proving the result for $\beta=0$ and $\lambda=1$.  We will consider from now on that $\alpha$ is fixed with $\alpha\in(0,1)$, so we will omit the dependence of the constants with respect to $\alpha$. First, we have that, in polar coordinates

\begin{align*}
    &\Lambda^{-\alpha}w(r,\theta)=\int_{-\pi}^{\pi}\int_{0}^{\infty} \frac{w(r',\theta')}{|(r-r')^2+2rr'(1-\cos(\theta-\theta'))|^{\frac{2-\alpha}{2}}}r'dr'd\theta'\\
    &=\int_{-\pi}^{\pi}\int_{-r}^{\infty} \frac{f(r+h)\cos(N\tilde{\theta})\cos(N\theta+Ng(r+h)+p(r+h))}{|h^2+2r(r+h)(1-\cos(\tilde{\theta}))|^{\frac{2-\alpha}{2}}}(r+h)dhd\tilde{\theta}.
\end{align*}

Since

\begin{align*}
    &||\Lambda^{-\alpha}w(r,\theta)-K_{\alpha}\frac{w(r,\theta)}{|(\frac{N}{r})^2+N^2g'(r)^2|^{\alpha/2}}||_{L^{2}}\\
    &\leq||(\Lambda^{-\alpha}w(r,\theta)-K_{\alpha}\frac{w(r,\theta)}{|(\frac{N}{r})^2+N^2g'(r)^2|^{\alpha/2}})1_{r\in(\frac14,2)}||_{L^{2}}\\
    &+ ||(\Lambda^{-\alpha}w(r,\theta)-K_{\alpha}\frac{w(r,\theta)}{|(\frac{N}{r})^2+N^2g'(r)^2|^{\alpha/2}})1_{r\notin(\frac14,2)}||_{L^{2}}=I_{1}+I_{2}
\end{align*}
we  will start by studying the operator when $r\in(\frac14,2)$, so that we can bound $I_{1}$. 

First, we note that
$$\frac{\partial^{k}}{\partial \tilde{\theta}^{k}}\frac{1}{|h^2+2r(r+h)(1-\cos(\tilde{\theta}))|^{\frac{2-\alpha}{2}}}=\sum_{i=1}^{k}\frac{((r+h)r)^{i}P_{k,i}(\tilde{\theta})}{|h^2+2r(r+h)(1-\cos(\tilde{\theta}))|^{\frac{2+2i-\alpha}{2}}}$$
with 
\begin{align}\label{Pki}
    P_{k,i}(\tilde{\theta})=\sum_{(j,l)\in S_{k,i}}c_{k,i,j,l}(\cos(\tilde{\theta}))^{j}(\sin(\tilde{\theta}))^{l},
\end{align}
\begin{align}\label{Ski}
    S_{k,i}:=\{(j,l)\in(0\cup\mathds{\mathds{N}})^2:l\geq i-(k-i),j+l=i\}.
\end{align}
To prove this, we note that the equality is trivially true for the case $k=0$ and
\begin{align*}
    &\frac{\partial}{\partial \tilde{\theta}}\frac{((r+h)r)^{i}(\cos(\tilde{\theta}))^{j}(\sin(\tilde{\theta}))^{l}}{(h^2+2r(r+h)(1-\cos(\tilde{\theta})))^{\frac{2+2i-\alpha}{2}}}\\
    &=-(2+2i-\alpha)\frac{((r+h)r)^{i+1}(\cos(\tilde{\theta}))^{j}(\sin(\tilde{\theta}))^{l+1}}{(h^2+2r(r+h)(1-\cos(\tilde{\theta})))^{\frac{2+2(i+1)-\alpha}{2}}}\\
    &-j\frac{((r+h)r)^{i}(\cos(\tilde{\theta}))^{j-1}(\sin(\tilde{\theta}))^{l+1}}{(h^2+2r(r+h)(1-\cos(\tilde{\theta})))^{\frac{2+2i-\alpha}{2}}}+l\frac{((r+h)r)^{i}(\cos(\tilde{\theta}))^{j+1}(\sin(\tilde{\theta}))^{l-1}}{|h^2+2r(r+h)(1-\cos(\tilde{\theta}))|^{\frac{2+2i-\alpha}{2}}}.
\end{align*}
This means that, if $C_{k,i,j,l}$ is  different from zero, by taking a derivative we get contributions to $c_{k+1,i+1,j,l+1},\ c_{k+1,i,j-1,l+1}$ and $c_{k+1,i,j+1,l-1}$. But this coefficients also fulfil the inequalities in \eqref{Ski}, which finishes the proof.

With this in mind, using integration by parts with respect to $\tilde{\theta}$  $k$ times,
\begin{align}\label{porpartes}
    &\int_{-\pi}^{\pi} \frac{\cos(N\tilde{\theta})}{|h^2+2r(r+h)(1-\cos(\tilde{\theta}))|^{\frac{2-\alpha}{2}}}d\tilde{\theta}\\\nonumber
    &=\int_{-\pi}^{\pi}\frac{\cos(N\tilde{\theta}+k\frac{\pi}{2})}{N^{k}}\sum_{i=1}^{k}\frac{((r+h)r)^{i}P_{k,i}(\tilde{\theta})}{|h^2+2r(r+h)(1-\cos(\tilde{\theta}))|^{\frac{2+2i-\alpha}{2}}}d\tilde{\theta}\\\nonumber
\end{align}
with $P_{k,i}$ as given in \eqref{Pki}.

This implies that, for $(r+h)\in \text{supp}(f)$

$$\sum_{i=0}^{k}\frac{((r+h)r)^{i}P_{k,i}(\tilde{\theta})}{|h^2+2r(r+h)(1-\cos(\tilde{\theta}))|^{\frac{2+2i-\alpha}{2}}}\leq \frac{C_{k}}{|h^2+2r(r+h)(1-\cos(\tilde{\theta}))|^{\frac{2+k-\alpha}{2}}}.$$

Therefore we get, for any $\epsilon'>0$

\begin{align*}
    &|\int_{-\pi}^{\pi}\int_{[-r,\infty ]\setminus[-N^{-1+\epsilon'},N^{-1+\epsilon'}]} \frac{f(r+h)\cos(N\tilde{\theta})\cos(N\theta+Ng(r+h)+p(r+h))}{|h^2+2r(r+h)(1-\cos(\tilde{\theta}))|^{\frac{2-\alpha}{2}}}(r+h)dhd\tilde{\theta}|\\
    &\leq C_{k} \frac{||f||_{L^{\infty}}}{N^{k}}N^{(1-\epsilon')(2+k-\alpha)}
\end{align*}
and this can be bounded by $C_{\epsilon'} N^{-1-\alpha}$ by taking $k$ big enough.



We can therefore focus only on the integral when $(h,\tilde{\theta})\in[-N^{-1+\epsilon'},N^{-1+\epsilon'}]\times[-\pi,\pi]$, and in fact by symmetry it is enough to study $(h,\tilde{\theta})\in[-N^{-1+\epsilon'},N^{-1+\epsilon'}]\times[0,\pi]$. For this set, we will make a couple of approximations for our kernel that will only produce a small error, namely, we note that, using integration by parts with respect to $\tilde{\theta}$

\begin{align*}
    &|\int_{0}^{\pi}\int_{-N^{-1+\epsilon'}}^{N^{-1+\epsilon'}} f(r+h)\cos(N\tilde{\theta})\Big(\frac{\cos(N\theta+Ng(r+h)+p(r+h))}{|h^2+2r(r+h)(1-\cos(\tilde{\theta}))|^{\frac{2-\alpha}{2}}}\\
    &-\frac{\cos(N\theta+Ng(r+h)+p(r+h))}{|h^2+r(r+h)\tilde{\theta}^2|^{\frac{2-\alpha}{2}}}\Big)(r+h)dhd\tilde{\theta}|\\
    &\leq C\frac{||f||_{L^{\infty}}}{N}\int_{-N^{-1+\epsilon'}}^{N^{-1+\epsilon'}}\int_{0}^{\pi} \Big|\frac{2r(r+h)\sin(\tilde{\theta})}{|h^2+2r(r+h)(1-\cos(\tilde{\theta}))|^{\frac{4-\alpha}{2}}}-\frac{2r(r+h)\tilde{\theta}}{|h^2+r(r+h)\tilde{\theta}^2|^{\frac{4-\alpha}{2}}}\Big|d\tilde{\theta}dh\\
    &\leq C\frac{||f||_{L^{\infty}}}{N}\Big(\int_{-N^{-1+\epsilon'}}^{N^{-1+\epsilon'}}\int_{0}^{\pi}\frac{1}{|h^2+\tilde{\theta}^2|^{\frac{1-\alpha}{2}}}d\tilde{\theta}dh\Big)\\
    &\leq C||f||_{L^{\infty}}N^{-2+\epsilon'}
\end{align*}


We also have, for large $N$,
\begin{align*}
    &|\int_{0}^{\pi}\int_{-N^{-1+\epsilon'}}^{N^{-1+\epsilon'}} f(r+h)\Big(\frac{(r+h)\cos(N\tilde{\theta})\cos(N\theta+Ng(r+h)+p(r+h))}{|h^2+r(r+h)\tilde{\theta}^2|^{\frac{2-\alpha}{2}}}\\
    &-\frac{R\cos(N\tilde{\theta})\cos(N\theta+Ng(r+h)+p(r+h))}{|h^2+r^2\tilde{\theta}^2|^{\frac{2-\alpha}{2}}}\Big)dhd\tilde{\theta}|\\
    &\leq ||f||_{L^{\infty}}C\int_{0}^{\pi}\int_{-N^{-1+\epsilon'}}^{N^{-1+\epsilon'}} \bigg(\frac{h\tilde{\theta}^2}{|h^2+r(r+h)\tilde{\theta}^2|^{\frac{4-\alpha}{2}}}+\frac{h}{|h^2+r(r+h)\tilde{\theta}^2|^{\frac{2-\alpha}{2}}}\bigg)dhd\tilde{\theta}\\
    &\leq ||f||_{L^{\infty}}CN^{-1+\epsilon'}\int_{-N^{-1+\epsilon'}}^{N^{-1+\epsilon'}}\frac{1}{h^{1-\alpha}}dh\leq ||f||_{L^{\infty}}CN^{(-1+\epsilon')(1+\alpha)}.
\end{align*}

All these inequalities combined already give us

\begin{align*}
    &|\Lambda^{-\alpha}w(r,\theta)-\int_{-\pi}^{\pi}\int_{-N^{-1+\epsilon'}}^{N^{-1+\epsilon'}} \frac{f(r+h)\cos(N\tilde{\theta})\cos(N\theta+Ng(r+h)+p(r+h))}{|h^2+r^2\tilde{\theta}^2|^{\frac{2-\alpha}{2}}}rdhd\tilde{\theta}|\\
    &\leq C_{\alpha,\epsilon'}||f||_{L^{\infty}}N^{-(1+\alpha)(1-\epsilon')},
\end{align*}
and furthermore, we have that

\begin{align*}
    &|\int_{-\pi}^{\pi}\int_{-N^{-1+\epsilon'}}^{N^{-1+\epsilon'}} \frac{f(r+h)\cos(N\tilde{\theta})(\cos(N\theta+Ng(r+h)+p(r+h))-\cos(N\theta+Ng(r+h)+p(r)))}{|h^2+r^2\tilde{\theta}^2|^{\frac{2-\alpha}{2}}}rdhd\tilde{\theta}|\\
    &\leq C||f||_{L^{\infty}}\ln{(N)}N^{-1+\epsilon'}\int_{0}^{\pi}\int_{0}^{N^{-1+\epsilon'}} \frac{1}{(h+\tilde{\theta})^{2-\alpha}}dhd\tilde{\theta}\leq C||f||_{L^{\infty}}\ln{(N)}N^{-(1+\alpha)(1-\epsilon')},
\end{align*}
and
\begin{align*}
    &|\int_{-\pi}^{\pi}\int_{-N^{-1+\epsilon'}}^{N^{-1+\epsilon'}} \frac{(f(r+h)-f(r))\cos(N\tilde{\theta})\cos(N\theta+Ng(r+h)+p(r))}{|h^2+r^2\tilde{\theta}^2|^{\frac{2-\alpha}{2}}}rdhd\tilde{\theta}|\\
    &\leq C||f||_{L^{\infty}}\ln{(N)}N^{-1+\epsilon'}\int_{0}^{\pi}\int_{0}^{N^{-1+\epsilon'}} \frac{1}{(h+\tilde{\theta})^{2-\alpha}}dhd\tilde{\theta}\leq C||f||_{L^{\infty}}\ln{(N)}N^{-(1+\alpha)(1-\epsilon')},
\end{align*}
and

\begin{align*}
    &|\int_{-\pi}^{\pi}\int_{-N^{-1+\epsilon'}}^{N^{-1+\epsilon'}} f(r))\cos(N\tilde{\theta})\frac{1}{|h^2+r^2\tilde{\theta}^2|^{\frac{2-\alpha}{2}}}r\\
    &(\cos(N\theta+Ng(r+h)+p(r))-\cos(N\theta+Ng(r)+Nhg'(r)+p(r))dhd\tilde{\theta}|\\
    &\leq C\ln{(N)}N^{-1+2\epsilon'}\int_{0}^{\pi}\int_{0}^{N^{-1+\epsilon'}} \frac{1}{(h+\tilde{\theta})^{2-\alpha}}dhd\tilde{\theta}\leq C\ln{(N)}N^{-(1+\alpha)(1-\epsilon')+\epsilon'}.
\end{align*}



Therefore, we just need to study
\begin{align*}
    &\int_{-\pi}^{\pi}\int_{-N^{-1+\epsilon'}}^{N^{-1+\epsilon'}} \frac{f(r)\cos(N\tilde{\theta})\cos(N\theta+Ng(r)+Nhg'(r)+p(r))}{|h^2+r^2\tilde{\theta}^2|^{\frac{2-\alpha}{2}}}rdhd\tilde{\theta}\\
    &=\int_{-r\pi }^{r\pi}\int_{-N^{-1+\epsilon'}}^{N^{-1+\epsilon'}} \frac{f(r)\cos(\frac{N}{r}\bar{\theta})\cos(N\theta+Ng(r)+Nhg'(r)+p(r))}{|h^2+\bar{\theta}^2|^{\frac{2-\alpha}{2}}}dhd\bar{\theta}\\
    &=N^{-\alpha}\int_{-r\pi N}^{r\pi N}\int_{-N^{\epsilon'}}^{N^{\epsilon'}} \frac{f(r)\cos(\frac{1}{r}s_{2})\cos(N\theta+Ng(r)+s_{1}g'(r)+p(r))}{|s_{1}^2+s_{2}^2|^{\frac{2-\alpha}{2}}}ds_{1}ds_{2}\\
    &=N^{-\alpha}\cos(N\theta+Ng(r)+p(r))\int_{-r\pi N}^{r\pi N}\int_{-N^{\epsilon'}}^{N^{\epsilon'}} \frac{f(r)\cos(\frac{1}{r}s_{2})\cos(s_{1}g'(r))}{|s_{1}^2+s_{2}^2|^{\frac{2-\alpha}{2}}}ds_{1}ds_{2},
\end{align*}

With this in mind, we want to show that
\begin{align}\label{H*}
    H^{*}:=\lim_{N\rightarrow} H_{N}&:=\lim_{N\rightarrow\infty}\int_{-r\pi N}^{r\pi N}\int_{-N^{\epsilon'}}^{N^{\epsilon'}} \frac{\cos(\frac{1}{r}s_{2})\cos(s_{1}g'(r))}{|s_{1}^2+s_{2}^2|^{\frac{2-\alpha}{2}}}ds_{1}ds_{2}\nonumber \\
    &=\frac{K_{\alpha}}{\big(\big(\frac{1}{r}\big)^2+g'(r)^2\big)^{\frac{\alpha}{2}}}
\end{align}
with $K_{\alpha}>0$ and also that, for $N_{2}\geq N_{1}$

\begin{equation}\label{HN1N2}
    |H_{N_{1}}-H_{N_{2}}|\leq C_{\epsilon',\alpha}N_{1}^{-1+\epsilon'},
\end{equation}
so in particular $|H^{*}-H_{N}|\leq C_{\epsilon'}N^{-1+\epsilon'}$.
We start by obtaining (\ref{HN1N2}). Note that

$$H_{N_{2}}-H_{N_{1}}=\int_{A_{N_{1},N_{2}}\cup B_{N_{1},N_{2}}} \frac{\cos(\frac{1}{r}s_{2})\cos(s_{1}g'(r))}{|s_{1}^2+s_{2}^2|^{\frac{2-\alpha}{2}}}ds_{1}ds_{2}$$
with 
$$A_{N_{1},N_{2}}:=[N_{1}^{\epsilon'},N_{2}^{\epsilon'}]\times[-r\pi N_{2},r\pi N_{2}]\cup [-N_{2}^{\epsilon'},-N_{1}^{\epsilon'}]\times[-r\pi N_{2},r\pi N_{2}]$$

$$B_{N_{1},N_{2}}:=[-N_{1}^{\epsilon'},N_{1}^{\epsilon'}]\times[-r\pi N_{2},-r\pi N_{1}]\cup [-N_{1}^{\epsilon'},N_{1}^{\epsilon'}]\times[r\pi N_{1},r\pi N_{2}].$$

We now bound the integral over $A_{N_{1},N_{2}}$, we will focus on the part with $s_{1}>0$, the other half being analogous.

Applying integration by parts $k$ times with respect to the variable $s_{2}$  we get

\begin{align*}
  &|\int_{N_{1}^{\epsilon'}}^{N_{2}^{\epsilon'}}\int_{-r\pi N_{2}}^{r\pi N_{2}} \frac{\cos(\frac{1}{r}s_{2})\cos(s_{1}g'(r))}{|s_{1}^2+s_{2}^2|^{\frac{2-\alpha}{2}}}ds_{2}ds_{1}|\\ 
  &\leq \Big(\frac{C_{k} N_{2}^{\epsilon'}}{N_{2}^{2-\alpha}}+C_{k}\int_{N_{1}^{\epsilon'}}^{N_{2}^{\epsilon'}}\int_{-r\pi N_{2}}^{r\pi N_{2}}\frac{1}{|s_{1}^2+s_{2}^2|^{\frac{2-\alpha+k}{2}}}ds_{2}ds_{1}\Big)\\
  &\leq \Big(\frac{C_{k}N_{2}^{\epsilon'}}{N_{2}^{2-\alpha}}+C_{k}\int_{N_{1}^{\epsilon'}}^{N_{2}^{\epsilon'}}\int_{-r\pi N_{2}}^{r\pi N_{2}}\frac{1}{|s_{1}+s_{2}|^{2-\alpha+k}}ds_{2}ds_{1}\Big)\\
  &\leq C_{k}N_{2}^{-2+\alpha+\epsilon'}+C_{k}N_{1}^{-\epsilon'(k-\alpha)}
\end{align*}
and by taking $k$ big enough we can bounds this quantity by $C_{\epsilon'}N_{1}^{-2+\alpha+\epsilon'}$ and in particular by $C_{\epsilon'}N_{1}^{-1+\epsilon'}$.

For the integral over $B_{N_{1},N_{2}}$, this time focusing on the part with $s_{2}>0$ and again applying integration by parts once with respect to $s_{2}$ we have

\begin{align*}
  &|\int_{-N_{1}^{\epsilon'}}^{N_{1}^{\epsilon'}}\int_{r\pi N_{1}}^{r\pi N_{2}} \frac{\cos(\frac{1}{r}s_{2})\cos(s_{1}g'(r))}{|s_{1}^2+s_{2}^2|^{\frac{2-\alpha}{2}}}ds_{2}ds_{1}|\\
  &\leq \frac{C N_{1}^{\epsilon'}}{N_{1}^{2-\alpha}}+C\int_{-N_{1}^{\epsilon'}}^{N_{1}^{\epsilon'}}\int_{r\pi N_{1}}^{r\pi N_{2}}\frac{1}{|s_{1}+s_{2}|^{3-\alpha}}ds_{2}ds_{1}\leq C N^{\epsilon'-2+\alpha}.
\end{align*}

Next we need to show the last equality in \eqref{H*}.
But
\begin{align*}
    &\int_{-r\pi N}^{r\pi N}\int_{-N^{\epsilon'}}^{N^{\epsilon'}} \frac{\cos(\frac{1}{r}s_{2})\cos(s_{1}g'(r))}{|s_{1}^2+s_{2}^2|^{\frac{2-\alpha}{2}}}ds_{1}ds_{2}\\
    &=4\int_{0}^{r\pi N}\int_{0}^{N^{\epsilon'}} \frac{\cos(\frac{1}{r}s_{2}+s_{1}g'(r))}{|s_{1}^2+s_{2}^2|^{\frac{2-\alpha}{2}}}ds_{1}ds_{2}\\
    &=4\int_{0}^{L_{N,r}(A)}\int_{0}^{\frac{\pi}{2}} \frac{\cos(\frac{1}{r}R\sin(A)+R\cos(A)g'(r))}{R^{2-\alpha}}RdAdR
\end{align*}
where we made the change of variables $s_{1}^1+s_{2}^2=R^2$, $A=\arctan\Big(\frac{s_{2}}{s_{1}}\Big)$ and $L_{N,r}(A)$ is  (given values $A,N$ and $r$) the maximum value of $R$ that is still in our domain of integration. The expression for $L_{N,r}(A)$ is complicated but we will just need to use that $L_{N,r}(A)\geq \text{min}(N^{\epsilon'},r\pi N)$. 
Note also that we can rewrite $\frac{1}{r}R\sin(A)+R\cos(A)g'(r)=R\lambda \cos(A+\theta_{0})$ with $\lambda=(\frac{1}{r^2}+g'(r)^2)^{\frac12}$, $\theta_{0}=\arctan(-\frac{1}{rg'(r)})$. But

\begin{align*}
    &\int_{0}^{L_{N,r}(A)}\int_{0}^{\frac{\pi}{2}} \frac{\cos(\lambda R\cos(A+\theta_{0}))}{R^{1-\alpha}}dAdR\\
    &=\int_{0}^{L_{N,r}(\tilde{A}-\theta_{0}-\frac{\pi}{2})}\int_{\delta}^{\frac{\pi}{2}} \frac{\cos(\lambda R\sin(\tilde{A}))}{R^{1-\alpha}}d\tilde{A}dR\\
    &+\int_{0}^{L_{N,r}(\tilde{A}-\theta_{0}-\frac{\pi}{2})}\int_{0}^{\delta}\frac{\cos(\lambda R\sin(\tilde{A}))}{R^{1-\alpha}}dAdR
\end{align*}
so then, using that, for any $S_{2}\geq S_{1}\geq 0$, $\Gamma\neq 0$, since $\alpha\in(0,1)$, we have


\begin{equation*}
    |\int_{0}^{S_{1}} \frac{\cos(\Gamma R)}{R^{1-\alpha}}dR|\leq \Gamma^{-\alpha} C_{max},\  |\int_{S_{1}}^{S_{2}} \frac{\cos(R)}{R^{1-\alpha}}dR|\leq \frac{C_{max}}{S^{1-\alpha}_{1}}
\end{equation*}
we get

$$|\int_{0}^{\delta}\int_{0}^{L_{N,r}(\tilde{A}-\theta_{0}-\frac{\pi}{2})}\frac{\cos(\lambda R\sin(\tilde{A}))}{R^{1-\alpha}}dRd\tilde{A}|\leq \int_{0}^{\delta} C_{max}(\lambda \sin(\tilde{A}))^{-\alpha}d\tilde{A}\leq \frac{CC_{max} \delta}{(\lambda\delta)^{\alpha}}$$
and also

$$\text{lim}_{N\rightarrow\infty}\int_{\delta}^{\frac{\pi}{2}}\int_{0}^{L_{N,r}(\tilde{A}-\theta_{0}-\frac{\pi}{2})} \frac{\cos(\lambda R\sin(\tilde{A}))}{R^{1-\alpha}}dRd\tilde{A}=\int_{\delta}^{\frac{\pi}{2}}P.V.\int_{0}^{\infty} \frac{\cos(\lambda R\sin(\tilde{A}))}{R^{1-\alpha}}dRd\tilde{A}$$
thus


\begin{align*}
    &\frac{H^{^*}}{4}=\text{lim}_{N\rightarrow\infty}\int_{0}^{\frac{\pi}{2}}\int_{0}^{L_{N,r}(A)} \frac{\cos(\lambda R\cos(A+\theta_{0}))}{R^{1-\alpha}}dRdA\\
    &=\text{lim}_{\delta\rightarrow 0}(\text{lim}_{N\rightarrow\infty}\int_{\delta}^{\frac{\pi}{2}}\int_{0}^{L_{N,r}(\tilde{A}-\theta_{0}-\frac{\pi}{2})} \frac{\cos(\lambda R\sin(\tilde{A}))}{R^{1-\alpha}}dRd\tilde{A})\\
    &+\text{lim}_{\delta\rightarrow 0}(\text{lim}_{N\rightarrow\infty}\int_{0}^{\delta}\int_{0}^{L_{N,r}(\tilde{A}-\theta_{0}-\frac{\pi}{2})} \frac{\cos(\lambda R\sin(\tilde{A}))}{R^{1-\alpha}}dRd\tilde{A})\\
    &=\text{lim}_{\delta\rightarrow 0}\int_{\delta}^{\frac{\pi}{2}}P.V.\int_{0}^{\infty} \frac{\cos(\lambda R\sin(\tilde{A}))}{R^{1-\alpha}}dRd\tilde{A}\\
    &=\text{lim}_{\delta\rightarrow 0}\int_{\delta}^{\frac{\pi}{2}}(\lambda \sin(\tilde{A}))^{-\alpha}\int_{0}^{\infty} \frac{\cos(R)}{R^{1-\alpha}}dRd\tilde{A}\\
    &=\frac{1}{(\frac{1}{r^2}+g'(r)^2)^{\frac{\alpha}{2}}}\int_{0}^{\frac{\pi}{2}}\sin(\tilde{A})^{-\alpha}d\tilde{A}\int_{0}^{\infty} \frac{\cos(R)}{R^{1-\alpha}}dR
\end{align*}
so, if we prove that $\int_{0}^{\infty} \frac{\cos(R)}{R^{1-\alpha}}dR$ is positive we are done.

For this, we note that, for $n\in\mathds{N}$

$$\int_{2\pi n}^{2\pi (n+1)}\frac{\cos(R)}{R^{1-\alpha}}dR=\int_{2\pi n}^{2\pi (n+1)}\frac{\sin(R)(1-\alpha)}{R^{2-\alpha}}dR>0$$
where we used integration by parts for the equality and the fact that the denominator is increasing and $\sin(x+\pi)=-\sin(x)$ for the inequality. In fact, the integral is also positive for $n=0$ by taking the integral in $[\delta,2\pi]$, applying integration by parts there and then taking $\delta$ small.

But then since, for $d\leq 2\pi$

$$\int_{2\pi n}^{2\pi n+d}\frac{\cos(R)}{R^{1-\alpha}}dR\leq \frac{C}{(2\pi n)^{1-\alpha}}$$
the limit trivially exists and is positive.

Combining all these bounds we have, for any $\epsilon'>0$, 
$$I_{1}\leq C_{\epsilon'}||f||_{L^{\infty}}N^{(-1-\alpha)(1-\epsilon')},$$ 
so we just need to bound $I_{2}$. In order to bound the $L^2$ norm for $r\notin(\frac14,2)$, we can use integration by parts twice and $h\gtrsim r$, $r+h\approx 1$ to get

\begin{align*}
    &|\int_{-\pi}^{\pi} \frac{f(r+h)\cos(N\tilde{\theta})\cos(N\theta+Ng(r+h)+p(r))}{|h^2+2r(r+h)(1-\cos(\tilde{\theta}))|^{\frac{2-\alpha}{2}}}(r+h)d\tilde{\theta}|\\
    \leq \frac{C||f||_{L^{\infty}}}{N^2}&\int_{-\pi}^{\pi} \frac{r}{|h^2+2r(r+h)(1-\cos(\tilde{\theta}))|^{\frac{2-\alpha}{2}+1}}\big(1+\frac{r}{|h^2+2r(r+h)(1-\cos(\tilde{\theta}))|}\big)d\tilde{\theta}\\
    &\leq \frac{C||f||_{L^{\infty}}}{N^2 |h|^{3-\alpha}}\big(1+\frac{1}{|h|}\big)
\end{align*}
which gives us

$$||(\Lambda^{-\alpha}w(r,\theta))1_{r\in(0,\frac14)}||_{L^{2}}\leq C||(\Lambda^{-\alpha}w(r,\theta))1_{r\in(0,\frac14)}||_{L^{\infty}}\leq \frac{C||f||_{L^\infty}}{N^2}$$
and, for $r\geq 2$
$$|\Lambda^{-\alpha}w(r,\theta)|\leq \frac{C||f||_{L^{\infty}}}{N^2 r^{3-\alpha}}$$
so that 
$$||(\Lambda^{-\alpha}w(r,\theta))1_{r\in(2,\infty)}||_{L^{2}}\leq \frac{C||f||_{L^\infty}}{N^2}$$
which finally gives us
\begin{align*}
    &||\Lambda^{-\alpha}w(r,\theta)-K_{\alpha}\frac{w(r,\theta)}{|(\frac{N}{r})^2+N^2g'(r)^2|^{\alpha/2}}||_{L^{2}}\leq I_{1}+I_{2}\leq C_{\epsilon'}N^{(-1-\alpha)(1-2\epsilon')}.
\end{align*}
Now, taking for example $\epsilon'=\frac{\epsilon}{4}$ finishes the proof for $L^2$. Note also that we only needed the $C^2$ bounds of $f$ and $g$ to obtain this result.
Next to obtain the bound for $H^{3}$ it is enough to check that we have a small $L^2$ norm for some arbitrary third derivative, that is to say, we want to show that

$$||\frac{\partial^3\Lambda^{-\alpha}w(r,\theta)}{\partial x_{i}\partial x_{j}\partial x_{k}}-\frac{\partial^3 K_{\alpha}\frac{w(r,\theta)}{|(\frac{N}{r})^2+N^2g'(r)^2|^{\alpha/2}}}{\partial x_{i}\partial x_{j}\partial x_{k}}||_{L^2}\leq C_{\epsilon,\beta} N^{-1-\alpha+\epsilon+3}||f||_{L^{\infty}}.$$

We will consider $i=j=k=1$ for ease of notation. To bound this norm, we divide it in two different contributions

\begin{align*}
    &||\frac{\partial^3\Lambda^{-\alpha}w(r,\theta)}{\partial x^3_{1}}-\frac{\partial^3 K_{\alpha}\frac{w(r,\theta)}{|(\frac{N}{r})^2+N^2g'(r)^2|^{\alpha/2}}}{\partial x^3_{1}}||_{L^2}\\
    &\leq ||\Lambda^{-\alpha}\frac{\partial^3w(r,\theta)}{\partial x^3_{1}}-\frac{K_{\alpha}}{|(\frac{N}{r})^2+N^2g'(r)^2|^{\alpha/2}}\frac{\partial^3 w(r,\theta)}{\partial x^3_{1}}||_{L^2}\\
    &+||\frac{\partial^2 \frac{K_{\alpha}}{|(\frac{N}{r})^2+N^2g'(r)^2|^{\alpha/2}}}{\partial x^2_{1}}\frac{\partial w(r,\theta)}{\partial x_{1}}||_{L^2}+||\frac{\partial \frac{K_{\alpha}}{|(\frac{N}{r})^2+N^2g'(r)^2|^{\alpha/2}}}{\partial x_{1}}\frac{\partial^2 w(r,\theta)}{\partial x^2_{1}}||_{L^2}\\
    &+||\frac{\partial^3 \frac{K_{\alpha}}{|(\frac{N}{r})^2+N^2g'(r)^2|^{\alpha/2}}}{\partial^3 x_{1}} w(r,\theta)||_{L^2}\\
    &\leq ||\Lambda^{-\alpha}\frac{\partial^3w(r,\theta)}{\partial x^3_{1}}-\frac{K_{\alpha}}{|(\frac{N}{r})^2+N^2g'(r)^2|^{\alpha/2}}\frac{\partial^3 w(r,\theta)}{\partial x^3_{1}}||_{L^2}\\
    &+\sum_{i=1}^{3}N^{-\alpha}|| \frac{K_{\alpha}}{|(\frac{1}{r})^2+g'(r)^2|^{\alpha/2}}||_{C^{i}(r\in(\frac{1}{2},\frac{3}{2}))}||w||_{H^{3-i}}
\end{align*}

The first term of the contribution can be bounded by writing the derivatives in polar coordinates, dividing it in its different frequencies in $\theta$ (which now includes frequencies $N\pm1$, $N\pm2$ and $N\pm 3$, which does not change the bounds for $N$ big ) and using the exact same bounds we obtained in $L^2$. The other term can be bound easily by direct computation by again writing the derivatives in polar coordinates, obtaining the desired bound for $\beta=3$. The interpolation inequality for Sobolev spaces then gives the result for $\beta\in[0,3]$.

Finally, the result for $\lambda>1$ follows directly from a scaling argument plus applying the lemma for $w(\frac{r}{\lambda},\theta)$ since

\begin{align*}
    &||\Lambda^{-\alpha}w(r,\theta)-K_{\alpha}\frac{w(r,\theta)}{|(\frac{N}{r})^2+N^2g'(r)^2|^{\alpha/2}}||_{H^{\beta}}\\
    &\leq \lambda^{-1-\alpha+\beta}||\Lambda^{-\alpha}w(\frac{r}{\lambda},\theta)-K_{\alpha}\frac{w(\frac{r}{\lambda},\theta)}{|(\frac{N}{r})^2+N^2g'(\frac{r}{\lambda})^2|^{\alpha/2}}||_{H^{\beta}}\\
    &\leq C_{P,\epsilon,\alpha} M \lambda^{-1-\alpha+\beta}N^{-1-\alpha+\epsilon+\beta}||f||_{L^{\infty}} 
\end{align*}

\end{proof}

\begin{corollary}\label{+alpha}
For any fixed parameters $\alpha\in(0,1]$, $P,\epsilon>0$ there exists $N_{0}$ such that if $N>N_{0}$, then for any $\lambda\geq 1$ and functions $f(r)$, $g(r)$ and $p(r)$ fulfilling $\text{supp}{f}\subset(\frac{1}{2\lambda},\frac{3}{2\lambda})$ and

$$|| g(\frac{r}{\lambda})||_{C^5}\leq(\ln{(N)})^{P},\ ||f(\frac{r}{\lambda})||_{C^{5}}\leq (\ln{(N)})^{P} ||f||_{L^{\infty}},||p(\frac{r}{\lambda})||_{C^{5}}\leq (\ln{(N)})^{P}$$
then for
$$w(r,\theta):=f(r)\cos(N(\theta+g(r))+p(r))$$
we have that for $\beta\in[0,3-\alpha]$ there exist constants $K_{\alpha}>0$ and $C_{P,\epsilon,\alpha,\beta}$ (depending on $\alpha$ and $P,\epsilon$ and $\alpha$ respectively) such that
$$||\Lambda^{\alpha}w(r,\theta)-K_{\alpha}^{-1}w(r,\theta)|(\frac{N}{r})^2+N^2g'(r)^2|^{\alpha/2}||_{H^{\beta}}\leq C_{P,\epsilon,\alpha} \lambda^{-1+\alpha+\beta}N^{-1+\alpha+\epsilon+\beta}||f||_{L^{\infty}}.$$
\end{corollary}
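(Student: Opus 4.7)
The plan is to deduce Corollary \ref{+alpha} from Lemma \ref{-alpha} via a short duality-style argument. The natural candidate for a local approximation of $\Lambda^\alpha w$ is
$$\tilde u(r,\theta) := K_\alpha^{-1}\bigl|(N/r)^2 + N^2 g'(r)^2\bigr|^{\alpha/2} w(r,\theta),$$
which has exactly the functional form required by Lemma \ref{-alpha}: writing $\tilde u = \tilde f(r)\cos(N(\theta+g(r))+p(r))$ with $\tilde f(r) := K_\alpha^{-1}|(N/r)^2+N^2 g'(r)^2|^{\alpha/2}f(r)$, the phase and support are unchanged. The crucial observation is that the prefactor of $w$ in $\tilde u$ is precisely the reciprocal of the one appearing in the conclusion of Lemma \ref{-alpha}, so applying that lemma to $\tilde u$ gives $\Lambda^{-\alpha}\tilde u = w + (\text{small error})$. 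Writing
$$\Lambda^\alpha w - \tilde u = -\Lambda^\alpha\bigl(\Lambda^{-\alpha}\tilde u - w\bigr),$$
and using the elementary Fourier-multiplier bound $\|\Lambda^\alpha F\|_{H^\beta}\le C\|F\|_{H^{\beta+\alpha}}$, it then suffices to control $\|\Lambda^{-\alpha}\tilde u - w\|_{H^{\beta+\alpha}}$ by the right-hand side of the claim.

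The first concrete step is to verify that $\tilde u$ satisfies the hypotheses of Lemma \ref{-alpha}. The support and $C^5$ conditions on $g$ and $p$ are inherited unchanged. For the amplitude, after rescaling I write $\tilde f(r/\lambda) = K_\alpha^{-1}(N\lambda)^\alpha[1/r^2 + G'(r)^2]^{\alpha/2}F(r)$ with $F(r)=f(r/\lambda)$ and $G(r)=g(r/\lambda)$, using $g'(r/\lambda)=\lambda G'(r)$. On $(1/2,3/2)$ the multiplier $[1/r^2+G'(r)^2]^{\alpha/2}$ is bounded above and away from zero, with $C^5$ norm controlled by a polynomial in $(\ln N)^P$, so the chain and product rules yield $\|\tilde f(r/\lambda)\|_{C^5}\le (\ln N)^{P'}\|\tilde f(r/\lambda)\|_{L^\infty}$ for some $P'=P'(P,\alpha)$. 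Along the way, since $1/r\le 2\lambda$ and $|g'(r)|\le \lambda(\ln N)^P$ on the support, one records the pointwise bound
$$\|\tilde f\|_{L^\infty}\le C(N\lambda)^\alpha(\ln N)^{\alpha P}\|f\|_{L^\infty}.$$

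Finally, I apply Lemma \ref{-alpha} to $\tilde u$ at the Sobolev exponent $\beta+\alpha$, which lies in $[\alpha,3]\subset[0,3]$ whenever $\beta\in[0,3-\alpha]$, with $\epsilon/2$ in place of $\epsilon$ and $P'$ in place of $P$. Since by construction $K_\alpha|(N/r)^2+N^2g'(r)^2|^{-\alpha/2}\tilde u = w$ exactly, the lemma gives
$$\|\Lambda^{-\alpha}\tilde u - w\|_{H^{\beta+\alpha}} \le C_{\epsilon,\alpha,P}\,\lambda^{\beta-1}N^{-1+\epsilon/2+\beta}\|\tilde f\|_{L^\infty}.$$
Substituting the amplitude bound, absorbing $(\ln N)^{\alpha P}$ into $N^{\epsilon/2}$ (valid for $N\ge N_0(\epsilon,\alpha,P)$), and combining with the identity and the $H^\beta\leftarrow H^{\beta+\alpha}$ bound for $\Lambda^\alpha$ from the first paragraph produces exactly the claimed estimate. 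The only potential obstacle is bookkeeping: one must check that the $(N\lambda)^\alpha$ growth of $\|\tilde f\|_{L^\infty}$ is precisely compensated by the shift $\beta\mapsto\beta+\alpha$ in the exponents of Lemma \ref{-alpha} (which produces $\lambda^{(\beta+\alpha)-\alpha-1}=\lambda^{\beta-1}$ and $N^{-1-\alpha+\epsilon+(\beta+\alpha)}=N^{-1+\epsilon+\beta}$), so that everything combines into the advertised $\lambda^{-1+\alpha+\beta}N^{-1+\alpha+\epsilon+\beta}\|f\|_{L^\infty}$; but this is a direct algebraic check with no analytical difficulty beyond the lemma itself.
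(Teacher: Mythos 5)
Your argument is essentially identical to the paper's: you define $\tilde u$ (which is the paper's $\bar{\Lambda}^{\alpha}w$), rewrite $\Lambda^{\alpha}w-\tilde u=-\Lambda^{\alpha}(\Lambda^{-\alpha}\tilde u-w)$, and apply Lemma \ref{-alpha} at the shifted Sobolev index $\beta+\alpha$, exactly as the paper does with $\beta'=\beta+\alpha$. The extra bookkeeping you supply (the $C^5$ verification for $\tilde f$ and the $(N\lambda)^{\alpha}(\ln N)^{\alpha P}$ tracking of $\|\tilde f\|_{L^{\infty}}$) is implicit in the paper's one-line appeal to the lemma, so the two proofs coincide.
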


\begin{proof}
This follows from the previous lemma. If we define, for $w$ as in our statement, the operators

\begin{equation}\label{bar+alpha}
    \bar{\Lambda}^{\alpha}(w(r,\theta)):=K_{\alpha}^{-1}w(r,\theta)|(\frac{N}{r})^2+N^2g'(r)^2|^{\alpha/2}
\end{equation}
\begin{equation}\label{bar-alpha}
    \bar{\Lambda}^{-\alpha}(w(r,\theta)):=K_{\alpha}\frac{w(r,\theta)}{|(\frac{N}{r})^2+N^2g'(r)^2|^{\alpha/2}}
\end{equation}
we have that $\bar{\Lambda}^{-\alpha}(\bar{\Lambda}^{\alpha} w)=w$, $\Lambda^{-\alpha}(\Lambda^{\alpha} w)=w$, so

\begin{align*}
    &(\Lambda^{\alpha}-\bar{\Lambda}^{\alpha})(w)=-\Lambda^{\alpha}(\Lambda^{-\alpha}-\bar{\Lambda}^{-\alpha})\bar{\Lambda}^{\alpha}w
    \end{align*}
and since, for our choice of $w$, $\bar{\Lambda}^{\alpha}w$ fulfils the hypothesis of lemma \ref{-alpha}, we can apply it and get, for $\beta'\in[0,3]$

$$||(\Lambda^{-\alpha}-\bar{\Lambda}^{-\alpha})\bar{\Lambda}^{\alpha}w||_{H^{\beta'}}\leq C_{\epsilon,\alpha,\beta',P} \lambda^{-1+\beta'}N^{-1+\epsilon+\beta'}||f||_{L^{\infty}} $$
and therefore

$$||(\Lambda^{\alpha}-\bar{\Lambda}^{\alpha})w||_{H^{\beta'-\alpha}}=||\Lambda^{\alpha}(\Lambda^{-\alpha}-\bar{\Lambda}^{-\alpha})\bar{\Lambda}^{\alpha}w||_{H^{\beta'-\alpha}}\leq C_{\epsilon,\alpha,\beta',P} \lambda^{-1+\beta'} N^{-1+\epsilon+\beta'}||f||_{L^{\infty}} $$
which finishes the proof by considering $\beta=\beta'-\alpha$.

\end{proof}

\subsection{Other relevant bounds}

Even though the most crucial technical bounds in this paper are the ones we obtained for the fractional dissipation, we need some other technical lemmas in order to obtain a suitable pseudo-solution and control the errors between the pseudo-solution and the real solution to \eqref{SQG}. Corollary \ref{vcuad} and lemma \ref{vr} give useful local approximations for $v$. Lemma \ref{conmutador} and corollary \ref{commutardorlambda} give  commutator estimates for the velocity of a highly oscillatory function, which will be useful when propagating the $L^2$ error between our pseudo-solution and the actual solution to \eqref{SQG}. Lemma \ref{radialdecay} proves some  decay bounds for radial functions solving a fractional heat equation, and finally lemma \ref{initialvel} shows that we can find a radial function with several useful properties, that we will use to construct the initial conditions for our pseudo-solution.

\begin{corollary}\label{vcuad}
For any fixed parameters $P,\epsilon>0$ there exists $N_{0}$ such that if $N>N_{0}$, then for any $\lambda\geq 1$ and  functions $f(r)$, $g(r)$ and $p(r)$ fulfilling $\text{supp} f(r)\subset(\frac{1}{2\lambda},\frac{3}{2\lambda})$ and

$$|| g(\frac{r}{\lambda})||_{C^5}\leq (\ln{(N)})^{P}\ ||f(\frac{r}{\lambda})||_{C^{5}}\leq (\ln{(N)})^{P} ||f||_{L^{\infty}}, ||p(\frac{r}{\lambda})||_{C^{5}}\leq (\ln{(N)})^{P}$$
if we define
$$w(r,\theta):=f(r)\cos(N(\theta+g(r))+p(r))$$
we have that for $\beta\in[0,2]$ there exist a constant $C_{\epsilon,\beta,P}$ (depending on $\beta,P$ and $\epsilon$) such that

$$||v_{1}(w(r,\theta))+\frac{\partial}{ \partial x_{2}}\bar{\Lambda}^{-1}(w)(r,\theta)||_{H^{\beta}}\leq C_{\epsilon,\beta,P}  N^{-1+\epsilon+\beta}||f||_{L^{\infty}},$$

$$||v_{2}(w(r,\theta))-\frac{\partial}{ \partial x_{1}}\bar{\Lambda}^{-1}(w)(r,\theta)||_{H^{\beta}}\leq C_{\epsilon,\beta,P}  N^{-1+\epsilon+\beta}||f||_{L^{\infty}},$$
with $\bar{\Lambda}^{-1}$ defined as in \eqref{bar-alpha}.

\end{corollary}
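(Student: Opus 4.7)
The plan is to reduce this corollary directly to Lemma \ref{-alpha} applied at $\alpha = 1$. The key observation is that the velocity comes from the Biot-Savart law $v=(-\mathcal{R}_{2}w,\mathcal{R}_{1}w)$ with $\mathcal{R}_{j}=\partial_{x_{j}}\Lambda^{-1}$, and hence
$$v_{1}(w)+\tfrac{\partial}{\partial x_{2}}\bar{\Lambda}^{-1}(w)=-\tfrac{\partial}{\partial x_{2}}\bigl(\Lambda^{-1}-\bar{\Lambda}^{-1}\bigr)w,\qquad v_{2}(w)-\tfrac{\partial}{\partial x_{1}}\bar{\Lambda}^{-1}(w)=\tfrac{\partial}{\partial x_{1}}\bigl(\Lambda^{-1}-\bar{\Lambda}^{-1}\bigr)w.$$
So in both cases the task reduces to controlling $\|(\Lambda^{-1}-\bar{\Lambda}^{-1})w\|_{H^{\beta+1}}$, since the outer Cartesian derivative $\partial_{x_{i}}$ is bounded from $H^{\beta+1}$ to $H^{\beta}$.

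The second step is to verify that the hypotheses of Lemma \ref{-alpha} transfer verbatim: the support condition $\mathrm{supp}\,f\subset(1/(2\lambda),3/(2\lambda))$ and the $C^{5}$ bounds with $(\ln N)^{P}$ growth on $f(r/\lambda)$, $g(r/\lambda)$, $p(r/\lambda)$ are exactly the hypotheses of the lemma in its $\lambda\geq 1$ form. I then invoke that lemma at $\alpha=1$ with Sobolev exponent $\beta'=\beta+1$; the admissibility range $\beta'\in[0,3]$ of the lemma is compatible with $\beta\in[0,2]$, and the resulting estimate is of the form $C_{\epsilon,P}\,\lambda^{\beta-1}N^{-1+\epsilon+\beta}\|f\|_{L^{\infty}}$. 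Absorbing the outer derivative by the standard boundedness $\|\partial_{x_{i}}u\|_{H^{\beta}}\leq\|u\|_{H^{\beta+1}}$ produces the exponent $N^{-1+\epsilon+\beta}$ claimed in the statement.

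There is essentially no obstacle here: the substance was done in Lemma \ref{-alpha}, and the corollary is a one-line consequence once the Biot-Savart identification $\mathcal{R}_{j}=\partial_{x_{j}}\Lambda^{-1}$ has been made. The only bookkeeping item is the $\lambda$-dependence: the factor $\lambda^{\beta-1}$ inherited from Lemma \ref{-alpha} is harmless for $\beta\leq 1$ and, for $\beta\in(1,2]$, is either absorbed into the constant $C_{\epsilon,\beta,P}$ or carried as a mild $\lambda$-power, which is consistent with how the corollary is subsequently used in the pseudo-solution construction where $\lambda$ is large but fixed at each stage.
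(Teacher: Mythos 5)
Your proposal matches the paper's proof: both use the Biot-Savart identity $v_{1}=-\partial_{x_{2}}\Lambda^{-1}$, $v_{2}=\partial_{x_{1}}\Lambda^{-1}$ to reduce the claim to $\|(\Lambda^{-1}-\bar{\Lambda}^{-1})w\|_{H^{\beta+1}}$, and then invoke Lemma \ref{-alpha} with $\alpha=1$ and Sobolev exponent $\beta'=\beta+1\in[0,3]$. One remark on your closing bookkeeping: the extra factor $\lambda^{\beta-1}$ cannot be absorbed into $C_{\epsilon,\beta,P}$ since $\lambda$ is a free parameter, but the paper's own proof of this corollary likewise ends with the bound $C_{\epsilon,\beta,P}\lambda^{-1+\beta}N^{-1+\epsilon+\beta}\|f\|_{L^{\infty}}$, so the missing $\lambda$-power is an inconsistency in the corollary's statement rather than a gap in your argument.
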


\begin{proof}
    This is a direct consequence of lemma \ref{-alpha}, since 
    $$v_{1}(w(r,\theta))=-\frac{\partial}{\partial x_{2}}\Lambda^{-1}(w(r,\theta)),\quad v_{2}(w(r,\theta))=\frac{\partial}{\partial x_{1}}\Lambda^{-1}(w(r,\theta))$$
    and applying lemma \ref{-alpha}, we have
    \begin{align*}
        &||\frac{\partial}{\partial x_{i}}\Lambda^{-1}(w(r,\theta))-\frac{\partial}{\partial x_{i}}\bar{\Lambda}^{-1}(w(r,\theta))||_{H^{\beta}}\leq ||\Lambda^{-1}(w(r,\theta))-\bar{\Lambda}^{-1}(w(r,\theta))||_{H^{\beta+1}}\\
        &\leq C_{\epsilon,\beta,P} \lambda^{-1+\beta}N^{-1+\epsilon+\beta}||f||_{L^{\infty}}.
    \end{align*}
\end{proof}

\begin{lemma}\label{vr}
For any fixed parameters $P,\epsilon>0$ there exists $N_{0}$ such that if $N>N_{0}$, then for any $\lambda\geq 1$ and functions $f(r)$,$g(r)$ and $p(r)$ fulfilling $\text{supp}f(r)\subset(\frac{1}{2\lambda},\frac{3}{2\lambda})$ and

$$|| g(\frac{r}{\lambda})||_{C^5}\leq (\ln{(N)})^{P}\ ||f(\frac{r}{\lambda})||_{C^{5}}\leq (\ln{(N)})^{P} ||f(\frac{r}{\lambda})||_{L^{\infty}},||p(\frac{r}{\lambda})||_{C^{5}}\leq (\ln{(N)})^{P}$$
if we define
$$w(r,\theta):=f(r)\cos(N(\theta+g(r))+p(r))$$
we have that for $\beta\in[0,2]$ there exist constants $C_{0}>0$ and $C_{P,\beta,\epsilon}$ (depending on $P,$ $\beta$ and $\epsilon$) such that for $N$ big enough,

$$||v_{r}(w(r,\theta))+C_{0}\frac{1}{(1+r^2g'(r)^2)^{\frac12}}f(r)\sin(N(\theta+g(r))+p(r))||_{H^{\beta}}\leq C_{\epsilon,\beta,P} \lambda^{-1+\beta} N^{-1+\epsilon+\beta}||f||_{L^{\infty}}$$
where $v_{r}:=v\cdot\hat{r}$ is the radial component of the velocity.

\end{lemma}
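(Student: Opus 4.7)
The plan is to deduce the estimate directly from Corollary \ref{vcuad} by taking the radial component of the resulting approximate velocity field and then evaluating a single angular derivative explicitly.

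First, I would write $v_r = v_1\cos\theta + v_2\sin\theta$ and apply Corollary \ref{vcuad} to each Cartesian component, giving
$$v_r = -\cos\theta\,\partial_{x_2}\bar{\Lambda}^{-1}w + \sin\theta\,\partial_{x_1}\bar{\Lambda}^{-1}w + \cos\theta\,E_1 + \sin\theta\,E_2,$$
where each $E_i$ satisfies $||E_i||_{H^\beta}\leq C_{\epsilon,\beta,P}\lambda^{-1+\beta}N^{-1+\epsilon+\beta}||f||_{L^\infty}$. Since all quantities in sight are supported in the annulus $\{\frac{1}{2\lambda}<|x|<\frac{3}{2\lambda}\}$, multiplication by $\cos\theta,\sin\theta$ can be replaced by multiplication by a fixed smooth cutoff $\chi$ times these angular factors; the resulting product is bounded on $H^\beta$ by a dimensional constant, and in particular no $\lambda$ or $N$ factor is picked up, so the error retains the required order.

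Second, I would convert Cartesian to polar derivatives via $\partial_{x_1}=\cos\theta\,\partial_r-\frac{\sin\theta}{r}\partial_\theta$ and $\partial_{x_2}=\sin\theta\,\partial_r+\frac{\cos\theta}{r}\partial_\theta$. The radial-derivative terms cancel and one is left with the elementary identity
$$-\cos\theta\,\partial_{x_2}F + \sin\theta\,\partial_{x_1}F = -\frac{1}{r}\partial_\theta F.$$
Applied to the explicit form of $F=\bar{\Lambda}^{-1}w$ coming from \eqref{bar-alpha}, namely $K_1\,\frac{r f(r)}{N\sqrt{1+r^2 g'(r)^2}}\cos(N(\theta+g(r))+p(r))$, the $\partial_\theta$ hits only the cosine and brings down a factor $-N\sin(\dots)$. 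The $N$ cancels the denominator and the prefactor $1/r$ cancels the $r$ in the numerator, leaving
$$-\frac{1}{r}\partial_\theta\bar{\Lambda}^{-1}w = K_1\,\frac{f(r)}{\sqrt{1+r^2 g'(r)^2}}\sin(N(\theta+g(r))+p(r)).$$
Choosing $C_0$ to be the constant $K_1$ from Lemma \ref{-alpha} (with the sign dictated by the above identity) and combining with the error bound from the first step gives the claimed estimate.

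The only genuine obstacle is verifying in the reduction step that multiplication by $\cos\theta$ and $\sin\theta$ does not spoil the $H^\beta$ error estimate; this is handled by introducing a smooth cutoff $\chi$ supported in an annulus away from the origin and equal to one on $\mathrm{supp}\,w$, after which $\chi\cos\theta$ and $\chi\sin\theta$ are smooth compactly supported functions and the standard Sobolev product estimates apply. Everything else is bookkeeping: the scaling factor $\lambda^{-1+\beta}$ is inherited directly from Corollary \ref{vcuad}, and the collapse of the radial component of the approximate gradient to $-\frac{1}{r}\partial_\theta\bar{\Lambda}^{-1}w$ is an elementary calculation.
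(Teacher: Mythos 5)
Your plan of deducing Lemma \ref{vr} from Corollary \ref{vcuad} via the polar identity $-\cos\theta\,\partial_{x_2}F+\sin\theta\,\partial_{x_1}F=-\tfrac{1}{r}\partial_\theta F$ is a clean idea and genuinely different from the paper's route, which re-runs the direct kernel analysis of Lemma \ref{-alpha} from scratch for the operator $v_r$. Unfortunately there is a gap at the crucial reduction step.

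Your argument for handling the factors $\cos\theta,\sin\theta$ rests on the assertion that ``all quantities in sight are supported in the annulus $\{\frac{1}{2\lambda}<|x|<\frac{3}{2\lambda}\}$.'' This is false for the error terms $E_i = v_i(w)+ (\pm)\partial_{x_{3-i}}\bar{\Lambda}^{-1}w$: the approximant $\partial_{x_{j}}\bar{\Lambda}^{-1}w$ is indeed annulus-supported (it is a derivative of a multiplication operator applied to $w$), but $v_i(w)=(\pm)\mathcal{R}_{3-i}w$ is a Riesz transform of $w$ and is nonlocal, so $E_i$ is supported on all of $\mathds{R}^2$. Consequently the cutoff replacement $\cos\theta\mapsto\chi\cos\theta$ is not legitimate, and you must actually control $\|\cos\theta\,E_1+\sin\theta\,E_2\|_{H^\beta}$ with the genuine angular factors, whose $k$-th derivatives blow up like $r^{-k}$ at the origin. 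For $\beta\in(0,2]$ this produces terms of the form $\|r^{-1}E_i\|_{L^2}$, $\|r^{-2}E_i\|_{L^2}$, and the two-dimensional Hardy inequality fails, so these are not controlled by $\|E_i\|_{H^\beta}$ alone. One can in principle rescue the argument by noting that near the origin $E_i=v_i(w)$ has angular frequency $N\pm1$ and therefore vanishes there to order $N-1$, which makes $\cos\theta\,E_1+\sin\theta\,E_2$ smooth at the origin; but turning this into a quantitative $H^\beta$ bound of the required size is a nontrivial extra step that your proposal omits. This is precisely the issue the paper sidesteps by estimating $v_r(w)$ directly through its integral kernel (mirroring the $I_1$/$I_2$ split and the separate decay bound for $r\notin(\tfrac14,2)$ of Lemma \ref{-alpha}), rather than by post-composing the Cartesian estimates with the singular polar factors.

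Two smaller remarks: the sign of the constant needs reconciling (your computation yields $v_r\approx +K_1\,f\sin(\cdot)/\sqrt{1+r^2g'^2}$ while the lemma is stated with $-C_0$ and $C_0>0$; you flag this, but it should be pinned down), and the statement of Corollary \ref{vcuad} as printed lacks the $\lambda^{-1+\beta}$ factor that its own proof (and your application) relies on, so you should cite the bound actually established in that proof rather than the displayed inequality in the corollary.
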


\begin{proof}
This proof is very similar to that of lemma \ref{-alpha}, but now we study the operator
\begin{align*}
&v_{r}(w)(r,\theta)=v\cdot\hat{r}\\&= \int_{[-r,\infty]\times[-\pi,\pi]}\frac{(r+h)^2\sin(\theta')(w(r+h,\theta'+\theta)-w(r,\theta))}{|h^2+2r(r+h)(1-\cos(\theta'))|^{3/2}}d\theta'dh
\end{align*}
so we will not delve too deeply into the details and mostly mention the key differences. Again, as before, we consider $P=1$ for simplicity, and we start dealing with the case $\beta=0,\lambda=1$.
First, using integration by parts $k$ times with respect to $\theta'$ as in lemma \ref{-alpha}, we note that, for $r\in(\frac14,2)$ we have that, 
$$ \int_{(-r,\infty)\setminus [-N^{-1+\epsilon'},N^{-1+\epsilon'}]}\int_{-\pi}^{\pi}\frac{(r+h)^2\sin(\theta')(w(r+h,\theta'+\theta)-w(r,\theta))}{|h^2+2r(r+h)(1-\cos(\theta'))|^{3/2}}d\theta'dh|\leq C_{\epsilon'}N^{-1}||f||_{L^{\infty}},$$
and furthermore, integrating by parts $2$ times  with respect to $\theta'$ we obtain

\begin{align*}
    &|\int_{-N^{-1+\epsilon'}}^{N^{-1+\epsilon'}}\int_{[N^{-\frac12},\pi]\cup[-\pi,-N^{-\frac12}]}\frac{(r+h)^2\sin(\theta')(w(r+h,\theta'+\theta)-w(r,\theta))}{|h^2+2r(r+h)(1-\cos(\theta'))|^{3/2}}d\theta'dh|\\
    &= |\int_{[N^{-\frac{1}{2}},\pi]\cup[-\pi,-N^{-\frac12}]}\int_{-N^{-1+\epsilon'}}^{N^{-1+\epsilon'}} \frac{f(r+h)\sin(N\theta')\sin(\theta')\sin(N\theta+Ng(r+h)+p(r+h))}{|h^2+2r(r+h)(1-\cos(\theta'))|^{\frac{3}{2}}}(r+h)^2dhd\theta'|\\
    &\leq C\Big(\frac{1}{N}\int_{-N^{-1+\epsilon'}}^{N^{-1+\epsilon'}} \frac{||f||_{L^{\infty}}}{|(1-\cos(N^{-\frac12}))|}dh+\int_{N^{-\frac12}}^{\pi}\int_{-N^{-1+\epsilon'}}^{N^{-1+\epsilon'}}\frac{1}{N^{2}} \frac{||f||_{L^{\infty}}}{|(1-\cos(\theta'))|^{2}}dhd\theta'\Big)\\
    &\leq C||f||_{L^{\infty}}N^{-1+\epsilon'}.
\end{align*}
So we can focus on the integral over $A:=[-N^{-1+\epsilon},N^{-1+\epsilon}]\times[-N^{-\frac12},N^{-\frac12}]$. Then, we check that 

\begin{align*}
     &|\int_{A}\Big(\frac{(r+h)^2\sin(\theta')(w(r+h,\theta'+\theta)-w(r,\theta))}{|h^2+2r(r+h)(1-\cos(\theta'))|^{3/2}}-\frac{r^2\theta'(w(r+h,\theta'+\theta)-w(r,\theta))}{|h^2+r^2 (\theta')^2)|^{3/2}}\Big)d\theta'dh|\\
     &\leq C||f||_{L^{\infty}}N^{-1+\epsilon'}
\end{align*}
so that we can work with the simplified version of the kernel, and we also have

\begin{align*}
    &|\int_{A}\frac{r^2\theta'(w(r+h,\theta'+\theta)-f(r)\sin(N\theta')\sin(N\theta+Ng(r)+Nhg'(r)+p(r)))}{|h^2+r^2 (\theta')^2)|^{3/2}}d\theta'dh|\\
    &\leq C||f||_{L^\infty}N^{-1+3\epsilon'}.
\end{align*}
Altogether, we obtain, for $r\in(\frac14,2)$,

$$|v_{r}(w)+f(r)\sin(N\theta+Ng(r)+p(r))\int_{A}\frac{r^2\theta'\sin(N\theta'+Nhg'(r)))}{|h^2+r^2 (\theta')^2)|^{3/2}}d\theta'dh|\leq C_{\epsilon'}N^{-1+3\epsilon'}||f||_{L^{\infty}}.$$
Next, defining $H_{N}$ as 

\begin{align*}
    &H_{N}:=\int_{A}\frac{r^2\theta'\sin(N\theta'+Nhg'(r)))}{|h^2+r^2 (\theta')^2)|^{3/2}}d\theta'dh=\int_{-\frac{N^{\epsilon'}}{r}}^{\frac{N^{\epsilon'}}{r}}\int_{-N^{\frac12}}^{N^{\frac12}}\frac{ s_{1}\sin(s_{1})\cos(rs_{2}g'(r))}{|s_{1}^{2}+s_{2}^2|^{3/2}}ds_{1}ds_{2}.
\end{align*}

We want to show that 

\begin{equation}\label{relacionH*}
   H^{*}:=\text{lim}_{N\rightarrow\infty}H_{N}=\frac{C_{0}}{|1+r^2g'(r)^2|^{1/2}}
\end{equation} 
and that 

\begin{equation}\label{HN2}
    |H^{*}-\text{lim}_{N\rightarrow\infty} H_{N}|\leq C_{\epsilon'}N^{-1+\epsilon'}||f||_{L^{\infty}}.
\end{equation}
(\ref{HN2}) is obtained exactly as in lemma \ref{-alpha}, by getting bounds for $|H_{N_{2}}-H_{N_{1}}|$ using integration by parts in the domain that  remains after canceling out the integrals from $H_{N_{1}}$ and $H_{N_{2}}$. As for (\ref{relacionH*}), using that, for any $K\in\mathds{R}$

$$|\int_{0}^{K}\frac{\sin(x)}{x}dx|\leq C,$$
and that, for $\lambda\neq 0$
$$\int_{0}^{\infty}\frac{\sin(\lambda x)}{x}dx=\text{sign}(\lambda)\int_{0}^{\infty}\frac{\sin(x)}{x}dx=\text{sign}(\lambda)\frac{C_{0}}{4}$$
we get that, using the change of variables $s_{1}=R\sin(A)$, $s_{2}=R\cos(A)$, basic trigonometric identities, checking carefully  the convergence of the integrals, and using 
$$\sin(\tan^{-1}(x))=\frac{x}{(1+x^2)^{\frac{1}{2}}}$$
we get
\begin{align*}
    &H^{*}=\text{lim}_{N\rightarrow \infty}\int_{-\frac{N^{\epsilon'}}{r}}^{\frac{N^{\epsilon}}{r}}\int_{-N^{\frac12}}^{N^{\frac12}}\frac{ s_{1}\sin(s_{1})\cos(rs_{2}g'(r))}{|s_{1}^{2}+s_{2}^2|^{3/2}}ds_{1}ds_{2}\\
    &=\int_{-\pi}^{\pi}\sin(A)\int_{0}^{\infty}\frac{ \sin(R(\sin(A)+\cos(A)rg'(r)))}{R}dRdA\\
    &=\frac{C_{0}}{4}\int_{-\pi}^{\pi}\sin(A) \text{sign}(\sin(A)+\cos(A)rg'(r))dA\\
    &=\frac{C_{0}}{4}\int_{-\pi}^{\pi}\sin(A) \text{sign}(\cos(A+\tan^{-1}(\frac{-1}{rg'(r)})))dA\\
    &=-\frac{C_{0}}{4}\sin(\tan^{-1}(\frac{-1}{rg'(r)}))\int_{-\pi}^{\pi}\cos(A+\tan^{-1}(\frac{-1}{rg'(r)}))) \text{sign}(\cos(A+\tan^{-1}(\frac{-1}{rg'(r)})))dA\\
    &=C_{0}\frac{1}{(1+r^2g'(r)^2)^{\frac12}}
\end{align*}
as we wanted to prove. The rest of the proof does not have any meaningful differences with lemma \ref{-alpha}, a bound for the decay of $v_{r}(w)$ is obtained for $r\notin (\frac14,2)$ to obtain the $L^2$ bound and then taking three derivatives, applying Leibniz rule and bounding each term gives us the bound for $H^3$. Then, the interpolation inequality gives then the result for $\beta\in(0,3)$.

For the case $\lambda>1$, 

\begin{align*}
    &||v_{r}(w(r,\theta))+C_{0}\frac{1}{(1+r^2g'(r)^2)^{\frac12}}f(r)\sin(N(\theta+g(r))+p(r))||_{H^{\beta}}\\
    &\leq \lambda^{-1+\beta}||v_{r}(w(\frac{r}{\lambda},\theta))+C_{0}\frac{1}{(1+r^2g'(\frac{r}{\lambda})^2)^{\frac12}}f(\frac{r}{\lambda})\sin(N(\theta+g(\frac{r}{\lambda}))+p(\frac{r}{\lambda}))||_{H^{\beta}}\\
    &\leq C_{\epsilon,\beta,P} M \lambda^{-1+\beta} N^{-1+\epsilon+\beta}||f||_{L^{\infty}}.
\end{align*}

\end{proof}

\begin{lemma} \label{conmutador}
Given $\epsilon>0$, $N\in\mathds{N}>3$, then for any functions $g(r),f(r),p(r)\in C^{1}$, $f(r)\in L^2$ and $g(r)$ with support in $r\in[1,4]$,
if we define
$$w(r,\theta):=f(r)\cos(N\theta+p(r))$$
then we have that for $i=1,2$
$$||[v_{i}(g(r)\sin(\theta)w(r,\theta))-g(r)\sin(\theta)v_{i}(w(r,\theta))]||_{L^{2}}\leq C_{\epsilon} N^{-1+\epsilon}||g||_{C^{1}}||w||_{L^{2}},$$
$$||[v_{i}(g(r)\cos(\theta)w(r,\theta))-g(r)\cos(\theta)v_{i}(w(r,\theta))]||_{L^{2}}\leq C_{\epsilon} N^{-1+\epsilon}||g||_{C^{1}}||w||_{L^{2}}.$$
\end{lemma}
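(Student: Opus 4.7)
My plan is to write the commutator as a singular integral and exploit (i) the Lipschitz regularity of the multiplier to cancel one degree of the kernel's diagonal singularity, and (ii) the rapid oscillation of $w$ in $\theta$ away from the diagonal via integration by parts in $\theta'$. Setting $a(y) := g(|y|)\sin(\mathrm{arg}\,y)$ (the cosine case is identical) and using that $v_{i}$ equals a Riesz transform up to a dimensional constant, one has
\begin{equation*}
[v_{i}, M_{a}]w(x) = c_{i}\,\mathrm{P.V.}\!\int_{\mathds{R}^{2}}\frac{x_{i}-y_{i}}{|x-y|^{3}}\bigl(a(y)-a(x)\bigr)w(y)\,dy.
\end{equation*}
Since $g\in C^{1}$ is supported in $[1,4]$ and the angular factor $\sin(\mathrm{arg}\,y)$ is smooth away from the origin, $a$ is globally Lipschitz with $\|a\|_{\mathrm{Lip}}\lesssim \|g\|_{C^{1}}$, so $|a(y)-a(x)|\leq C\|g\|_{C^{1}}|x-y|$ and the effective kernel has size $\|g\|_{C^{1}}/|x-y|$, locally integrable on $\mathds{R}^{2}$.

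I would split the $y$-integration with a smooth cutoff at $|x-y|\sim N^{-1+\epsilon}$ into a near piece $J^{n}$ and a far piece $J^{f}$. For $J^{n}$ the pointwise bound gives $|J^{n}(x)|\lesssim \|g\|_{C^{1}}\int_{|x-y|\lesssim N^{-1+\epsilon}}|w(y)|/|x-y|\,dy$, and since the convolution kernel $|z|^{-1}1_{|z|\lesssim N^{-1+\epsilon}}$ has $L^{1}(\mathds{R}^{2})$-norm of order $N^{-1+\epsilon}$, Young's inequality immediately yields the target bound $\|J^{n}\|_{L^{2}}\lesssim \|g\|_{C^{1}}N^{-1+\epsilon}\|w\|_{L^{2}}$.

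For $J^{f}$ I would integrate by parts twice in $\theta'$ using $\cos(N\theta'+p(r'))$ and the $2\pi$-periodicity in $\theta'$, so that interior boundary terms vanish; derivatives falling on the cutoff are supported in the transition annulus and can be absorbed into the $J^{n}$-type estimate. Each integration by parts contributes a factor $N^{-1}$, and the worst surviving term after two IBPs is $(\partial_{\theta'}^{2}K)(a(y)-a(x))$ of size $\|g\|_{C^{1}}|x-y|^{-3}$ (using $|\partial_{\theta'}^{2}K|\lesssim (r')^{2}|x-y|^{-4}$ near the diagonal and $|a(y)-a(x)|\lesssim |x-y|$); the terms where a derivative lands on $a$ are strictly better, and crucially $\partial_{\theta'}^{k}a$ stays bounded by $\|g\|_{C^{1}}$ for every $k$ because $\partial_{\theta'}$ only differentiates the angular factor. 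Applying Schur's test with $\int_{|x-y|>N^{-1+\epsilon}}|x-y|^{-3}\,dy\lesssim N^{1-\epsilon}$ then yields $\|J^{f}\|_{L^{2}}\lesssim N^{-2}\|g\|_{C^{1}}\cdot N^{1-\epsilon}\|w\|_{L^{2}}=\|g\|_{C^{1}}N^{-1-\epsilon}\|w\|_{L^{2}}$, even stronger than required.

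The main technical obstacle I anticipate is handling $x$ with $|x|\notin[1,4]$, where $a(x)=0$ and the Lipschitz cancellation $|a(y)-a(x)|\lesssim |x-y|$ is lossy for large $|x-y|$. There one switches to the cruder bound $|a(y)-a(x)|=|a(y)|\leq\|g\|_{\infty}$ combined with the compact support of $a(y)$ in $|y|\in[1,4]$ and the decay $|K(x-y)|\lesssim |x|^{-2}$; applying the IBP-gain in $\theta'$ only to the inner $y$-integral produces a contribution bounded in $L^{2}(\{|x|>10\})$ by $\lesssim \|g\|_{C^{1}}N^{-1}\|w\|_{L^{2}}$, and this together with the interior estimate closes the proof.
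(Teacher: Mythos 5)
Your approach is sound in outline but takes a genuinely different route from the paper's. Near the diagonal both proofs use the Lipschitz cancellation $|a(y)-a(x)|\lesssim \|g\|_{C^1}|x-y|$ to turn the kernel into $\|g\|_{C^1}|x-y|^{-1}$; you close with Young, while the paper — which cuts in the radial variable $h=r'-r$ rather than in $|x-y|$, and further splits into $v_{1,2}$ (both $|h|,|\theta'|\lesssim N^{-1+\epsilon}$) and $v_{1,3}$ ($|h|$ small, $|\theta'|$ large) — expands the square of the $L^2$ norm and runs a Cauchy--Schwarz double integral, to the same effect. The genuine divergence is in the far region: you keep the commutator factor $a(y)-a(x)$, integrate by parts twice in $\theta'$ under a smooth cutoff, and Schur-test; the paper discards the cancellation there entirely and bounds $g\sin\theta\,v_{1,1}(w)$ and $v_{1,1}(g\sin\theta\,w)$ separately, needing $k=k(\epsilon)$ integrations by parts (but, because a radial cut leaves $\theta'$ running over the full circle, picking up no cutoff-derivative terms). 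Retaining the cancellation far away lets you stop at two IBPs, which is a real economy, at the price of managing the transition-annulus terms; you handle those correctly.

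There is, however, a gap in the far Schur step. The bound $|\partial_{\theta'}^{2}K\cdot(a(y)-a(x))|\lesssim \|g\|_{C^1}|x-y|^{-3}$ absorbs a factor of $(r')^{2}$ (from $|\partial_{\theta'}^{2}K|\lesssim (r')^{2}|x-y|^{-4}+r'|x-y|^{-3}$, with $\partial_{\theta'}y$ and $\partial_{\theta'}^{2}y$ both of size $r'$) that is harmless only when $r'=|y|$ is bounded. When $|y|\gg 1$ the commutator factor forces $|x|\in[1,4]$ (otherwise $a(y)-a(x)=0$), so $|x-y|\sim r'$, the Lipschitz gain is lossy (one must instead use $|a(y)-a(x)|=|a(x)|\leq\|g\|_{\infty}$), and the effective kernel decays only like $\|g\|_{\infty}(r')^{-2}$, whose Schur integral over $\{|y|>10\}$ diverges logarithmically. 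Your last paragraph handles the mirror region $|x|>10$, $|y|\in[1,4]$ by switching to the crude bound plus the decay of $K$; the companion region $|y|>10$, $|x|\in[1,4]$ needs the same treatment (a Hilbert--Schmidt or $L^{2}$-duality bound after one IBP suffices), but it is omitted. A second small inaccuracy: the claim that terms with a derivative on $a$ are ``strictly better'' does not hold when both $\theta'$-derivatives fall on $a$; since $\partial_{\theta'}^{2}a=-a$, that term is $N^{-2}$ times a truncated Riesz transform applied to $aw$, which is $L^{2}$-bounded but not via Schur on $|K|$. Both are local fixes that leave the architecture of your argument intact.
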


\begin{proof}

We will only consider the first inequality and with $i=1$, since the other cases are analogous. We note now that

$$v_{1}(W(r,\theta))=P.V.\int_{-r}^{\infty}\int_{-\pi}^{\pi}\frac{(r+h)\sin(\theta+\theta')-r\sin(\theta)}{|h^2+2r(r+h)(1-\cos(\theta'))|^{\frac32}}(r+h)W(r+h,\theta+\theta')dhd\theta'.$$

Now, since the principal value integral is defined using cartesian coordinates, we would like to show that for $C^1$ functions, there is a more suitable expression using polar coordinates, namely we would like to show that

\begin{align*}
    &\text{lim}_{\epsilon\rightarrow 0}\Big(\int_{\mathds{R^2}\setminus B_{\epsilon}(x)}\frac{y_{2}-x_{2}}{|x-y|^{3}}w(y)dy-\int_{||x|-|y||\geq \epsilon}\frac{y_{2}-x_{2}}{|x-y|^{3}}w(y)dy\Big)\\
    &=\text{lim}_{\epsilon\rightarrow 0}\int_{\{|x|+\epsilon\geq |y|\geq |x|-\epsilon\}\setminus B_{\epsilon}(x)}\frac{y_{2}-x_{2}}{|x-y|^{3}}w(y)dy=0
\end{align*}
but, writing the integrals in polar coordinates and cancelling all the terms with the wrong parity with respect to $h$ or $\theta'$

\begin{align*}
    &|\int_{\{|y|+\epsilon\geq |x|\geq |y|-\epsilon\}\setminus B_{\epsilon}(x)}\frac{y_{2}-x_{2}}{|x-y|^{3}}w(x)dy|\\
    &\leq C||w||_{L^{\infty}}\int_{\{|h|\geq \epsilon\}\setminus B_{\epsilon}(x)}\frac{h^2+\theta^2}{|h^2+2r(r+h)(1-\cos(\theta'))|^{\frac32}}dhd\theta' \leq C||w||_{L^{\infty}} \epsilon|\ln(\epsilon)|
\end{align*}
and
\begin{align*}
    &|\int_{|y|+\epsilon\geq |x|\geq |y|-\epsilon\setminus B_{\epsilon}(x)}\frac{y_{2}-x_{2}}{|x-y|^{3}}(w(y)-w(x))dy| \\
    &\leq||w||_{C^1}|\int_{|y|+\epsilon\geq |x|\geq |y|-\epsilon\setminus B_{\epsilon}(x)}\frac{1}{|x-y|}dy|\leq C||w||_{C^1}\epsilon|\ln(\epsilon)|,
\end{align*}
so in particular we can write

$$v_{1}(W(r,\theta))=P.V.\int_{-r}^{\infty}\int_{-\pi}^{\pi}\frac{(r+h)\sin(\theta+\theta')-r\sin(\theta)}{|h^2+2r(r+h)(1-\cos(\theta'))|^{\frac32}}(r+h)W(r+h,\theta+\theta')dhd\theta'$$
$$=\text{lim}_{\epsilon\rightarrow 0}\int_{[-r,\infty]\setminus [-\epsilon,\epsilon]}\int_{-\pi}^{\pi}\frac{(r+h)\sin(\theta+\theta')-r\sin(\theta)}{|h^2+2r(r+h)(1-\cos(\theta'))|^{\frac32}}(r+h)W(r+h,\theta+\theta')dhd\theta'.$$

Now,  using integration by parts $k$ times with respect to $\theta'$, we have that, for $ |h|\geq \frac{1}{2}$, $(r+h)\in(1,4)$

\begin{align*}
    &|\int_{-\pi}^{\pi}\frac{(r+h)\sin(\theta+\theta')-r\sin(\theta)}{|h^2+2r(r+h)(1-\cos(\theta'))|^{3/2}}(r+h)\cos(N\theta'+C)d\theta'|\\
    &\leq |\int_{-\pi}^{\pi}\frac{rC_{k}}{N^{k}|h^2+2r(r+h)(1-\cos(\theta'))|^{3/2}}d\theta'|\\
    &\leq \frac{rC_{k}}{h^3N^{k}}.
\end{align*}

Analogously if $\frac12\geq |h|\geq N^{-1+\epsilon}$, we can also perform integration by parts $k$ times

\begin{align*}
     & |\int_{-\pi}^{\pi}\frac{(r+h)\sin(\theta+\theta')-r\sin(\theta)}{|h^2+2r(r+h)(1-\cos(\theta'))|^{3/2}}(r+h)\cos(N\theta'+C)d\theta'|\\
    &\leq  \frac{rC_{k}}{N^{k}}\int_{-\pi}^{\pi}\sum_{i=0}^{k}\frac{1}{|h^2+2r(r+h)(1-\cos(\theta'))|^{\frac {3+i}{2}}}d\theta'\\
    &\leq \frac{C_{k}rN^{(1-\epsilon)k}}{h^3 N^{k}}\leq \frac{C_{k}rN^{-k \epsilon}}{h^3 }
\end{align*}
for any $k\in\mathds{N}$ for some $C_{k}$.


Using this, for any $H(r)\in L^2$ 
and $p(r)\in C^1$  if we define $v_{1,1}$

$$
\mbox{\small $v_{1,1}(H(r)\cos(N\theta+p(r))):=\int_{ |h|\geq N^{-1+\epsilon'}}\int_{-\pi}^{\pi}\frac{(r+h)\sin(\theta+\theta')-r\sin(\theta)}{|h^2+2r(r+h)(1-\cos(\theta'))|^{3/2}}(r+h)\cos(N\theta+p(r))H(r+h)d\theta' dh$ }
$$
we get, if $r\in[1,4]$,
\begin{align}\label{H1}
    &|v_{1,1}(H(r)\cos(N\theta+p(r)))|\leq \int_{|h|\geq N^{-1+\epsilon'}} (\frac{C_{k}r}{h^3 N^{k}}+\frac{C_{k}r}{h^3N^{k\epsilon}}) |H(r+h)|dh\\
    &\leq \frac{C_{\epsilon'}}{N}||H||_{L^2}\nonumber
\end{align}
and, if $\text{supp}H(r)\subset r\in[1,4]$
\begin{align}\label{H2}
    &|v_{1,1}(H(r)\cos(N\theta+p(r)))|\leq \int_{|h|\geq N^{-1+\epsilon'}} (\frac{C_{k}r}{h^3 N^{k}}+\frac{C_{k}r}{h^3N^{k\epsilon}}) |H(r+h)|dh\\
    &\leq \frac{C_{\epsilon'}}{N}\text{min}(1,\frac{1}{r^2})||H||_{L^2}.\nonumber
\end{align}
We can then apply \eqref{H1} to obtain $||g(r)\sin(\theta)v_{1,1}(w(r,\theta))||_{L^{2}}\leq \frac{C_{\epsilon'}}{N}||g||_{L^\infty}||w||_{L^2}$ and by decomposing $\sin(\theta)w(r,\theta)$ in its different Fourier modes and applying \eqref{H2} we get $||v_{1,1}(g(r)\sin(\theta)w(r,\theta))||_{L^{2}}\leq \frac{C_{\epsilon',A}}{N}||g||_{L^{\infty}}||w||_{L^2}$  .

If we now further divide the operator $v_{1}(W)$ as $v_{1}(W)=v_{1,1}(W)+v_{1,2}(W)+v_{1,3}(W)$, with

\begin{align*}
    &v_{1,2}(W):=\int_{-N^{-1+\epsilon'}}^{N^{-1+\epsilon'}}\int_{-N^{-1+\epsilon'}}^{N^{-1+\epsilon'}}\frac{(r+h)\sin(\theta+\theta')-r\sin(\theta)}{|h^2+2r(r+h)(1-\cos(\theta'))|^{\frac32}}(r+h)W(r+h,\theta+\theta')dhd\theta'
\end{align*}
\begin{align*}
    &v_{1,3}(W):=\int_{N^{-1+\epsilon'}}^{2\pi-N^{-1+\epsilon'}}\int_{-N^{-1+\epsilon'}}^{N^{-1+\epsilon'}}\frac{(r+h)\sin(\theta+\theta')-r\sin(\theta)}{|h^2+2r(r+h)(1-\cos(\theta'))|^{\frac32}}(r+h)W(r+h,\theta+\theta')dhd\theta'
\end{align*}
using the previous bound it is enough to show that, for $i=2,3$

$$||[v_{1,i}(g(r)\sin(\theta)w(r,\theta))-g(r)\sin(\theta)v_{1,i}(w(r,\theta))]||_{L^{2}} \leq C_{\epsilon} N^{-1+\epsilon}||g||_{C^{1}}||f||_{L^{2}}.$$

But, for $i=2$
\begin{align*}
    &||g(r)\sin(\theta) v_{1,2}(w(r,\theta))-v_{1,2}(g(r)\sin(\theta)w(r,\theta))||^2_{L^2}\\
    &\leq ||\int_{-N^{-1+\epsilon'}}^{N^{-1+\epsilon'}}\int_{-N^{-1+\epsilon'}}^{N^{-1+\epsilon'}}\frac{(r+h)\sin(\theta+\theta')-r\sin(\theta)}{|h^2+2r(r+h)(1-\cos(\theta'))|^{\frac32}}(r+h)\\
    &[g(r+h)\sin(\theta+\theta')-g(r)\sin(\theta)]w(r+h,\theta+\theta')dhd\theta'||^{2}_{L^2}\\
    &\leq C||g||^2_{C^1}\int_{0}^{5}\int_{0}^{2\pi}\Big(\int_{|h|\leq N^{-1+\epsilon'}}\int_{|\theta'|\leq N^{-1+\epsilon'}}\frac{(r+h)^2(|\theta'|+|h|)^2|f(r+h)|}{|h^2+2r(r+h)(1-\cos(\theta'))|^{3/2}}d\theta' dh\Big)^2rdrd\theta
\end{align*}

\begin{align*}
    &\leq C||g||^2_{C^1}\int_{0}^{5}\Big(\int_{|h|\leq N^{-1+\epsilon'}}\int_{|\theta'|\leq N^{-1+\epsilon'}}\frac{|f(r+h)|}{|h^2+r^2(\theta')^2|^{1/2}}d\theta' dh\Big)^2rdr\\
    &\leq C||g||^2_{C^1}\int_{|h_{1}|\leq N^{-1+\epsilon'}}\int_{|\theta_{1}'|\leq N^{-1+\epsilon'}}\frac{1}{|h_{1}^2+\theta_{1}^2|^{1/2}}\int_{|h_{2}|\leq N^{-1+\epsilon'}}\int_{|\theta_{2}'|\leq N^{-1+\epsilon'}}\frac{1}{|h_{2}^2+\theta_{2}^2|^{1/2}}\\
    &\int_{0}^{2}|f(r+h_{1})||f(r+h_{2})|rdrd\theta_{1}' dh_{1}d\theta_{2}' dh_{1}\\
    &\leq CN^{-2+2\epsilon'}||g||^2_{C^1}||f||^2_{L^2}
\end{align*}
and similarly

\begin{align*}
    &||f(r)v_{1,3}(g(r)\cos(N\theta))-v_{1,3}(f(r)g(r)\cos(N\theta))||^2_{L^{2}}\\
    &\leq C||g||^2_{C^1}\int_{0}^{5}\Big(\int_{|h|\leq N^{-1+\epsilon'}}\int_{\pi\geq|\theta'|\geq N^{-1+\epsilon'}}\frac{|f(r+h)|}{|h^2+(\theta')^2|^{1/2}}d\theta' dh\Big)^2rdr\\
    &\leq C||g||^2_{C^1}\int_{0}^{2}\Big(\int_{|h|\leq N^{-1+\epsilon'}}|f(r+h)|\ln(|h|+N^{-1+\epsilon}) dh\Big)^2rdr\\
    &\leq C||g||^2_{C^1}\int_{|h_{1}|\leq N^{-1+\epsilon'}}\ln(|h_{1}|+N^{-1+\epsilon}) \int_{|h_{2}|\leq N^{-1+\epsilon'}}\ln(|h_{2}|+N^{-1+\epsilon})\int_{0}^{2}|f(r+rh_{1})||f(r+h_{2})|rdrdh_{2}dh_{1}\\
    &\leq C(\ln(N))^2N^{-2+2\epsilon'}||g||^2_{C^1}||f||^2_{L^2}.
\end{align*}

We obtain now our result from combining all our inequalities since

\begin{align*}
    &||v_{1}(f(r)g(r)\cos(N\theta))-f(r)v_{1}(g(r)\cos(N\theta))||_{L^{2}}\\
    &\leq \sum_{i=1}^{3}||v_{1,i}(f(r)g(r)\cos(N\theta))-f(r)v_{1,i}(g(r)\cos(N\theta))||_{L^{2}}\\
    &\leq C_{\epsilon'}||f||_{L^{2}}||g||_{C^{1}}(N^{-1}+N^{-1+\epsilon'}+\ln{(N)}N^{-1+\epsilon'}).
\end{align*}

\end{proof}

\begin{remark}
    Although we require $f(r),h(r)\in C^1$, note that the bounds do not involve their $C^1$ norms, since this condition is only required to ensure convergence of the integrals involved.
\end{remark}

\begin{corollary} \label{commutardorlambda}
Given $\epsilon>0$, $N\in\mathds{N}>3$, $\lambda>0$, then for any functions $g(r),f(r),h(r)\in C^{1}$, $f(r)\in L^2$ and $g(r)$ with support in $r\in[\lambda^{-1},4\lambda^{-1}]$,
we have that for $i=1,2$, if we define
$$w(r,\theta):=f(r)\cos(N\theta+h(r))$$

$$||[v_{i}(g(r)\sin(\theta)w(r,\theta))-g(r)\sin(\theta)v_{i}(w(r,\theta))]||_{L^{2}}\leq C_{\epsilon}N^{-1+\epsilon}||g(\frac{r}{\lambda})||_{C^{1}}||w||_{L^{2}},$$
$$||[v_{i}(g(r)\cos(\theta)w(r,\theta))-g(r)\cos(\theta)v_{i}(w(r,\theta))]||_{L^{2}}\leq C_{\epsilon}  N^{-1+\epsilon}||g(\frac{r}{\lambda})||_{C^{1}}||w||_{L^{2}}.$$


\end{corollary}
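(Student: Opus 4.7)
The plan is to reduce this corollary to Lemma \ref{conmutador} by a simple rescaling argument. The only ingredient I need beyond Lemma \ref{conmutador} itself is that the velocity operator $v$, being built out of Riesz transforms, commutes with dilations: for any $F$ and any $\mu > 0$, $v_i(F(\cdot/\mu))(x) = v_i(F)(x/\mu)$. This is immediate from the Fourier characterization of $\mathcal{R}_j$, or alternatively from the $-2$-homogeneity of the singular kernel combined with the change of variables.

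First I would define the rescaled objects
\[
\tilde{w}(r,\theta) := w(r/\lambda,\theta) = \tilde{f}(r)\cos(N\theta + \tilde{h}(r)), \qquad \tilde{g}(r) := g(r/\lambda),
\]
where $\tilde{f}(r) := f(r/\lambda)$ and $\tilde{h}(r) := h(r/\lambda)$. By construction $\tilde{g}$ has support in $r \in [1,4]$, and $\tilde{f} \in L^2$ and $\tilde{g},\tilde{h} \in C^1$, so the hypotheses of Lemma \ref{conmutador} apply to $\tilde{g}, \tilde{w}$. The key identity is that, letting $T(r,\theta) := g(r)\sin(\theta) w(r,\theta)$ so that $T(r/\lambda,\theta) = \tilde{g}(r)\sin(\theta)\tilde{w}(r,\theta)$, dilation invariance of $v_i$ gives
\[
v_i\!\bigl(\tilde{g}(r)\sin(\theta)\tilde{w}(r,\theta)\bigr)(r,\theta) = v_i(g\sin(\theta)w)(r/\lambda,\theta),
\]
and likewise $\tilde{g}(r)\sin(\theta)\, v_i(\tilde{w})(r,\theta) = g(r/\lambda)\sin(\theta)\, v_i(w)(r/\lambda,\theta)$.

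Therefore the commutator evaluated at $(r,\theta)$ in the tilded variables equals the original commutator evaluated at $(r/\lambda,\theta)$. Next I would track the $L^2$-scaling: writing $L^2$ in polar coordinates with the measure $r\,dr\,d\theta$ and changing variables $u = r/\lambda$, a function of the form $G(r/\lambda,\theta)$ satisfies $\|G(\cdot/\lambda,\cdot)\|_{L^2} = \lambda \|G\|_{L^2}$. Applying this to both the commutator and to $w$ itself, I obtain
\[
\|[v_i(g\sin(\theta)w) - g\sin(\theta)v_i(w)]\|_{L^2} = \lambda^{-1}\|[v_i(\tilde{g}\sin(\theta)\tilde{w}) - \tilde{g}\sin(\theta)v_i(\tilde{w})]\|_{L^2},
\]
and $\|\tilde{w}\|_{L^2} = \lambda\|w\|_{L^2}$. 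Invoking Lemma \ref{conmutador} on the tilded commutator,
\[
\|[v_i(\tilde{g}\sin(\theta)\tilde{w}) - \tilde{g}\sin(\theta)v_i(\tilde{w})]\|_{L^2} \leq C_\epsilon N^{-1+\epsilon}\|\tilde{g}\|_{C^1}\|\tilde{w}\|_{L^2} = C_\epsilon N^{-1+\epsilon}\|g(r/\lambda)\|_{C^1}\,\lambda\|w\|_{L^2},
\]
and the two powers of $\lambda$ cancel, yielding exactly the claimed bound. The argument for the $\cos(\theta)$ variant is identical.

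There is no real obstacle here; the only thing to double-check is the dilation invariance of $v_i$, which is standard for Riesz transforms, and the homogeneity of the polar $L^2$ norm, which is just the Jacobian of $r \mapsto \lambda r$. The rest is bookkeeping that the factors of $\lambda$ in the two $L^2$ norms cancel, leaving the same constant $C_\epsilon$ as in Lemma \ref{conmutador}.
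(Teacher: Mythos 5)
Your proposal is correct and follows essentially the same approach as the paper: rescale $g,w$ by $\lambda$, verify the rescaled functions satisfy the hypotheses of Lemma \ref{conmutador}, use the degree-zero scaling of $v_i$ and the factor $\lambda$ gained/lost in the two-dimensional $L^2$ norm, and note that the two powers of $\lambda$ cancel. Nothing to add.
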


\begin{proof}
    To prove it is enough to note that, for $f(r)$, $g(r)$ as in our hypothesis, $f(\frac{r}{\lambda}),g(\frac{r}{\lambda})$ would fulfil the hypothesis of lemma \ref{conmutador}, so again focusing on $i=1$ and $g(r)\sin(\theta)$, using the scaling properties of $v$ and of the $L^2$ and $C^1$ norms we have

\begin{align*}
    &||v_{1}(g(r)\sin(\theta)w(r,\theta))-g(r)\sin(\theta)v_{1}(w(r,\theta))||_{L^{2}}\\
    &=\frac{1}{\lambda} ||v_{1}(g(\frac{r}{\lambda})\sin(\theta)w( \frac{r}{\lambda},\theta))-g( \frac{r}{\lambda})\sin(\theta)v_{1}(w(\lambda r,\theta))||_{L^{2}}\\
    &\leq \frac{C_{\epsilon}}{\lambda}  N^{-1+\epsilon}||g(\frac{ r}{\lambda})||_{C^1}||w(\frac{r}{\lambda},\theta)||_{L^2}\leq \frac{C_{\epsilon}}{\lambda}N^{-1+\epsilon}||g(\frac{r}{\lambda})||_{C^1}\lambda||w(r,\theta)||_{L^2}\\
    &=C_{\epsilon}  N^{-1+\epsilon}||g(\frac{r}{\lambda})||_{C^1}||w(r,\theta)||_{L^2}.
\end{align*}
    
\end{proof}

\begin{lemma}\label{radialdecay}
    Let $\alpha\in(0,1)$, $f(r)$ be a $C^{\infty}$ radial function with $\hat{f}\in C^{\infty}$. Let $f(r,t)$ be the solution to
    $$\p_{t}f(r,t)=-\Lambda^{\alpha}f(r,t),$$
    $$f(r,0)=f(r).$$
    Then, for any $\epsilon>0$, $T>0$, there exists a constant $C_{\epsilon}$ such that, for $t\in [0,T]$ we have that
    $$|(\p_{r}f)(r_{0},t)|\leq C_{\epsilon}(1+T)r_{0}^{-\frac{3+2\alpha-\epsilon}{3}},$$
    $$|(\p_{r}\p_{r})f(r_{0},t)|\leq C_{\epsilon}(1+T)r_{0}^{-\frac{5+2\alpha-\epsilon}{3}}.$$
\end{lemma}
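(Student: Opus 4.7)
The plan is to use the semigroup representation $f(x,t) = (p_t^\alpha \ast f_0)(x)$, where $p_t^\alpha$ denotes the two-dimensional $\alpha$-stable heat kernel, and to combine the standard pointwise decay of this kernel with the decay of $f_0$ implied by the hypotheses $f, \hat f \in C^\infty$ (which in context should place $f_0$ in a Schwartz-type class, so that $f_0$ and all of its derivatives decay faster than any polynomial).

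First I would record the well-known bounds
\[
|\nabla^k p_t^\alpha(z)| \leq C_k \frac{t}{(|z|^2 + t^{2/\alpha})^{(2+k+\alpha)/2}}, \qquad k = 0, 1, 2,
\]
which follow from the scaling $p_t^\alpha(z) = t^{-2/\alpha}\, p_1^\alpha(z\, t^{-1/\alpha})$ together with the polynomial asymptotics of $p_1^\alpha$ and its derivatives at infinity; these are standard for stable densities.

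Next I would write $\partial_r f(r_0, t) = \int_{\mathds{R}^2} (\partial_r p_t^\alpha)(x_0 - y)\, f_0(y)\, dy$, with $x_0 = (r_0, 0)$, and split the integration into $|y| < r_0/2$ and $|y| \geq r_0/2$. On the first region $|x_0 - y| \geq r_0/2$, so the kernel is bounded by $C\, t / r_0^{3+\alpha}$; pairing with $\|f_0\|_{L^1}$ this contributes at most $C\, t\, r_0^{-3-\alpha}$. On the second region, the rapid decay of $f_0$, combined with the $L^1$ control $\|\partial_r p_t^\alpha\|_{L^1} \leq C\, t^{-1/\alpha}$, gives a contribution $\leq C_N\, t^{-1/\alpha} r_0^{-N}$ for any $N$. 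Interpolating this $t$-dependent bound against the uniform-in-$t$ bound obtained at $t = 0$ directly from the Schwartz decay of $\partial_r f_0$ produces a polynomial-in-$r_0$ decay of the form claimed, with the $(1+T)$ factor absorbing the time dependence uniformly on $[0,T]$. The bound on $\partial_r^2 f$ is handled in exactly the same way, using $\partial_r^2 p_t^\alpha$ and its decay rate $t/|z|^{4+\alpha}$.

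The main obstacle is handling the transition regime $|x_0 - y| \sim t^{1/\alpha}$, where the two pointwise bounds on the kernel meet; this is managed by working with the minimum of the two bounds and choosing the splitting threshold carefully. The stated exponent $(3+2\alpha-\epsilon)/3$ is considerably weaker than the natural kernel rate $3+\alpha$, so there is ample slack to absorb non-sharp steps in the bookkeeping (in particular the $t^{-1/\alpha}$ blow-up coming from the $L^1$ estimate) while keeping the constant of the form $C_\epsilon(1+T)$ as stated.
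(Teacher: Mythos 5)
Your proposal is correct but follows a genuinely different route from the paper's. The paper argues entirely in Fourier space: it bounds $\xi_1\hat f(|\xi|)e^{-|\xi|^\alpha t}$ (the Fourier transform of $\partial_{x_1}f(\cdot,t)$) in $H^{1+\alpha-\epsilon}$ by $C_\epsilon(1+T)$ — the Hölder index $\alpha$ in the intermediate $C^{1,\alpha}$ bound reflects the cusp of $|\xi|^\alpha$ at $\xi=0$ — which is equivalent to the weighted estimate $\|r^{1+\alpha-\epsilon}\partial_{x_1}f(\cdot,t)\|_{L^2}\lesssim 1+T$. It then upgrades this weighted $L^2$ bound to a pointwise one by integrating $r^{2s}(\partial_{x_1}f)^2\,r\,dr$ over the interval $[r_0,\,r_0+|\partial_{x_1}f(r_0)|/(2\|\partial_{x_1}f\|_{C^1})]$; the resulting cube-root is what produces the exponent $(3+2\alpha-\epsilon)/3$, noticeably weaker than the weight $1+\alpha$ would suggest. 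You instead work in physical space with the $\alpha$-stable kernel. That approach is sound and, done cleanly, strictly stronger: if you move the derivative onto the data, $\partial_r f(r_0,t)=(p_t^\alpha*\partial_{x_1}f_0)(r_0,0)$, then $\|p_t^\alpha\|_{L^1}=1$ controls the far region $|y|\geq r_0/2$ uniformly in $t$ (bypassing the $t^{-1/\alpha}$ interpolation you invoke, which is the one genuinely delicate point in your sketch and is left somewhat vague), while $p_t^\alpha(z)\leq C\,t\,|z|^{-2-\alpha}$ on the near region paired with $\|\partial_{x_1}f_0\|_{L^1}$ gives $|\partial_r f(r_0,t)|\lesssim(1+T)r_0^{-2-\alpha}$ for $r_0\gtrsim 1$, a sharper rate than the lemma asserts (note $2+\alpha>(3+2\alpha)/3$ and $2+\alpha>(5+2\alpha)/3$ for all $\alpha\in(0,1)$). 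Alternatively, one can keep $\partial_r$ on the kernel and use the cancellation $\int\partial_r p_t^\alpha=0$ together with $\int|z|\,|\nabla p_t^\alpha(z)|\,dz=O(1)$ uniformly in $t$ to tame the far region.

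One caveat that applies equally to your proof and to the paper's: the stated hypotheses $f,\hat f\in C^\infty$ do not by themselves force $f_0$ to be Schwartz (needed for your far-region tails), nor do they give the $\|f\|_{H^2}$ and $C^{1,\alpha}$ control on $\xi_1\hat f$ that the paper's interpolation uses. These extra properties do hold for the function $g$ to which the lemma is actually applied (constructed via Lemma \ref{initialvel}), so this is a gap in the statement rather than in either argument.
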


\begin{proof}
    We will prove the bounds for $(\p_{r}f)(r_{0},t)$, the bounds for the second derivative being completely analogous.
    We start by noting that $\widehat{f(\cdot,t)}=\hat{f}(|\xi|)e^{-|\xi|^{\alpha}t}$. Furthermore, we have that
    $$\widehat{\p_{x_{1}}f(\cdot,t)}=-2\pi i\xi_{1}\hat{f}(|\xi|)e^{-|\xi|^{\alpha}t}.$$
    Since 
    $$||2\pi\xi_{1}\hat{f}(|\xi|)e^{-|\xi|^{\alpha}t}||_{C^{1,\alpha}}\leq C(1+t)$$
    $$||2\pi\xi_{1}\hat{f}(|\xi|)e^{-|\xi|^{\alpha}t}||_{L^2}\leq ||f||_{H^2}\leq C$$
    we have that, for any $\epsilon>0$, $t\in[0,T]$,
    $$||2\pi\xi_{1}\hat{f}(|\xi|)e^{-|\xi|^{\alpha}t}||_{H^{1+\alpha-\epsilon}}\leq C_{\epsilon}(1+T)$$
    which implies
    $$C||r^{1+\alpha-\epsilon}\p_{x_{1}}f(r,t)||_{L^2}\leq C_{\epsilon}(1+T).$$
    But then, using the notation $\tilde{f}(r):=\p_{x_{1}}f(r)$, $M=||\tilde{f}(r)||_{C^1}<\infty$
    \begin{align*}
        &||r^{1+\alpha-\epsilon}\tilde{f}(r)||^2_{L^2}\geq \int_{r_{0}}^{r_{0}+\frac{\tilde{f}(r_{0})}{2M}}(r^{1+\alpha-\epsilon}\tilde{f}(r)^2)^2rdr\geq r_{0}^{3+2\alpha-2\epsilon}\int_{r_{0}}^{r_{0}+\frac{\tilde{f}(r_{0})}{2M}}\tilde{f}(r)^2dr\\
        &\geq r_{0}^{3+2\alpha-2\epsilon}\frac{\tilde{f}(r_{0})^2}{4}\int_{r_{0}}^{r_{0}+\frac{\tilde{f}(r_{0})}{2M}}dr\geq r_{0}^{3+2\alpha-2\epsilon}\frac{\tilde{f}(r_{0})^3}{8M}
    \end{align*}
    and thus
    $$C_{\epsilon}(1+T)\geq r_{0}^{3+2\alpha-2\epsilon}\frac{\tilde{f}(r_{0})^3}{8M}$$
    and since $$\p_{r}f(r)=\cos(\theta)\p_{x_{1}}f(r)$$
    this finishes the proof.
\end{proof}

Before we can define our pseudo-solution, we need one last technical lemma.
\begin{lemma}\label{initialvel}
Given  any $a_{i}$ for $i=1,2,$ there exists a  $C^{\infty}$ function $g(r)$ such that $\hat{g}(\hat{r})\in C^{\infty}$ has support in $\hat{r}\in(c,\infty)$ for some $c>0$ and 

$$\frac{\partial}{\partial r}\frac{v_{\theta}(g(r))}{r}(r=1)=a_{1}$$
$$\frac{\partial^2}{\partial r^2}\frac{v_{\theta}(g(r))}{r}(r=1)=a_{2},$$
$$\frac{\partial^3}{\partial r^3}\frac{v_{\theta}(g(r))}{r}(r=1)=a_{3},$$
where $v_{\theta}$ is the angular component of the velocity, $\hat{g}$ is the Fourier transform of $g$ and $\hat{r}$ is the radial variable in the frequency domain.
    
\end{lemma}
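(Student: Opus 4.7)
The plan is to recognize the conclusion as the surjectivity of a linear map onto $\mathbb{R}^3$ and then to show that surjectivity via a duality argument reducing to an asymptotic analysis of Bessel functions. Since $g$ is radial, the stream function $\psi = \Lambda^{-1}g$ is also radial and $\nabla^{\perp}\psi = \psi'(r)\hat{\theta}$, so $v_{\theta}(g)(r) = \partial_{r}\Lambda^{-1}g(r)$. Consequently the map
$$T(g) := \Big(\partial_{r}\tfrac{v_{\theta}(g)}{r}(1),\ \partial_{r}^{2}\tfrac{v_{\theta}(g)}{r}(1),\ \partial_{r}^{3}\tfrac{v_{\theta}(g)}{r}(1)\Big)$$
is linear in $g$. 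Let $\mathcal{F}$ denote the space of radial $g\in C^{\infty}$ with $\hat{g}\in C^{\infty}$ supported in $\hat{r}\in(c,\infty)$ for some $c>0$. The lemma then reduces to the statement $T(\mathcal{F}) = \mathbb{R}^{3}$.

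Next I would rewrite $T$ using the $2$D Hankel representation: for radial $g$, $\Lambda^{-1}g(r) = \int_{0}^{\infty}J_{0}(r\rho)\hat{g}(\rho)\,d\rho$ up to constants. Differentiating in $r$, using $J_{0}'=-J_{1}$ and computing the three derivatives of $\tfrac{v_{\theta}(g)}{r}$ at $r=1$ under the integral sign yields
$$T_{j}(g) = \int_{0}^{\infty}M_{j}(\rho)\,\hat{g}(\rho)\,d\rho,\qquad j=1,2,3,$$
where each $M_{j}(\rho)$ is an explicit combination of Bessel functions $J_{k}(\rho)$ with coefficients that are polynomials in $\rho$ of degree $j+1$. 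Now, if $T(\mathcal{F})$ were a proper subspace of $\mathbb{R}^{3}$, there would exist $\alpha=(\alpha_{1},\alpha_{2},\alpha_{3})\neq 0$ with $\sum_{j}\alpha_{j}T_{j}(g)=0$ for every $g\in\mathcal{F}$. Setting $K_{\alpha}:=\sum_{j}\alpha_{j}M_{j}$, this means $\int_{0}^{\infty}K_{\alpha}(\rho)\,\hat{g}(\rho)\,d\rho = 0$ for every smooth $\hat{g}$ compactly supported in some $(c,\infty)$. Varying $c>0$ arbitrarily and invoking density, this forces $K_{\alpha}\equiv 0$ on $(0,\infty)$.

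The heart of the argument — and the main obstacle — is then to verify that no nonzero $\alpha$ can make $K_{\alpha}$ vanish identically. I would use the classical asymptotic $J_{\nu}(\rho) \sim \sqrt{2/(\pi\rho)}\cos(\rho-\nu\pi/2-\pi/4)$ as $\rho\to\infty$, together with the analogous formulas for its derivatives. Substituting into the $M_{j}$'s, the three terms $\alpha_{j}M_{j}(\rho)$ become oscillatory expressions whose amplitudes scale as distinct powers of $\rho$ (with the highest power coming from $\alpha_{3}M_{3}$). Matching the dominant amplitude as $\rho\to\infty$ forces $\alpha_{3}=0$; the next order then forces $\alpha_{2}=0$, and finally $\alpha_{1}=0$. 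This contradiction establishes $T(\mathcal{F})=\mathbb{R}^{3}$.

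Finally, given any prescribed $(a_{1},a_{2},a_{3})$ one selects $g\in T^{-1}((a_{1},a_{2},a_{3}))$; by construction of $\mathcal{F}$ this $g$ has $\hat{g}\in C^{\infty}$ supported in some $(c,\infty)$ with $c>0$, which is exactly the conclusion of the lemma. The only delicate point is the asymptotic separation argument, which is robust but needs care with the subleading oscillatory corrections to ensure the dominant-balance cancellation scheme actually closes.
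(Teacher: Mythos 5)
Your proposal is correct in outline, and it takes a genuinely different route from the paper's. The paper argues constructively: starting from a fixed bump $h_{1}$, it builds $h_{2}=\tfrac1r\partial_r^2(r h_{1})$ and $h_{3}=\tfrac1r\partial_r^4(r h_{1})$, computes the concentration limits $v_{\theta}(\lambda^{2k}h_{k}(\lambda\,\cdot))\to c_{k}r^{-2k}$ directly from the Biot--Savart kernel for radial data (with strong $C^{k}$ convergence on $r\in(\tfrac12,\tfrac32)$), applies a high-pass filter $H_{c}$ to enforce the Fourier-support condition, and then observes that the resulting derivative vectors are proportional to $(1,4,20),(1,6,42),(1,8,72)$, which are linearly independent. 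You instead rewrite $v_{\theta}(g)(r)=\partial_{r}\Lambda^{-1}g(r)$ via the Hankel transform, reduce surjectivity to the nonvanishing of a continuous kernel $K_{\alpha}=\sum_{j}\alpha_{j}M_{j}$ by a duality/density argument, and then show $K_{\alpha}\not\equiv 0$ using the $\rho\to\infty$ asymptotics of $J_{1}$ and its derivatives: since the top term of $M_{j}$ is $\rho^{j+1}J_{1}^{(j)}(\rho)$ with coefficient exactly $1$, the leading amplitudes scale like distinct powers $\rho^{j+1/2}$ and must vanish in descending order of $j$. Both arguments are really a linear-independence verification in $\mathbb{R}^{3}$, but the paper's is elementary and self-contained (just the kernel and a scaling limit), whereas yours is more conceptual and avoids constructing explicit test functions at the cost of needing the large-argument asymptotics of Bessel functions and a short argument (e.g.\ analyticity of $K_{\alpha}$ plus the leading-order coefficient being $1$) to rule out accidental cancellation, which you correctly flag as the delicate step but do not fully carry out.
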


\begin{proof}

We start by choosing $h_{1}(r)$ smooth function with support in $r\in (\frac14,\frac12)$ fulfilling

$$\int_{0}^{\frac12}sh_{1}(s)ds=1$$
and then we define

$$h_{2}(r):=\frac{1}{r}\frac{\partial^2}{\partial r^2}rh_{1}(r)$$
$$h_{3}(r):=\frac{1}{r}\frac{\partial^4}{\partial r^4}rh_{1}(r).$$

Now, since for a generic radial function $h(r)$ we have

$$v_{\theta}(h(\cdot))(r,\alpha)= P.V.\int_{\mathds{R}_{+}\times[-\pi,\pi]}r'\frac{r-r'\cos(\theta')}{|r^2+(r')^2-2rr'\cos(\theta'))|^{3/2}}(h(r')-h(r))d\theta'dr'$$
then for $r\in(\frac12,\frac32)$ we have
\begin{equation}\label{h1}
    \text{lim}_{\lambda\rightarrow\infty}v_{\theta}(\lambda^2h_{1}(\lambda\cdot))=\text{lim}_{\lambda\rightarrow\infty}2\pi\int_{\mathds{R}_{+}}\frac{r}{|r^2|^{3/2}}\lambda^2r'h_{1}(\lambda r')dr'=\frac{2\pi}{r^2}
\end{equation}
and furthermore there is strong convergence in $C^{k}$ for $r\in(\frac12,\frac32)$ for any fixed $k$. On the other hand, using integration by parts twice with respect to $r'$, we have that, for $r\in(\frac12,\frac32)$, $\lambda>3$

\begin{align*}
    v_{\theta}(\lambda^4h_{2}(\lambda\cdot))(r,\alpha)&=P.V.\int_{\mathds{R}_{+}\times[-\pi,\pi]}\frac{r-r'\cos(\theta')}{|r^2+(r')^2-2rr'\cos(\theta'))|^{3/2}}\lambda^4r'h_{2}(\lambda r')d\theta'dr'\\
    &=P.V.\int_{\mathds{R}_{+}\times[-\pi,\pi]}\bigg(\frac{\partial^2 }{\partial (r')^2}\frac{r-r'\cos(\theta')}{|r^2+(r')^2-2rr'\cos(\theta'))|^{3/2}}\bigg)\lambda^2r'h_{1}(\lambda r')d\theta'dr'
\end{align*}
and similarly, using integration by parts four times with respect to $r'$
\begin{align*}
    v_{\theta}(\lambda^6h_{3}(\lambda\cdot))(r,\alpha)&=P.V.\int_{\mathds{R}_{+}\times[-\pi,\pi]}\frac{r-r'\cos(\theta')}{|r^2+(r')^2-2rr'\cos(\theta'))|^{3/2}}\lambda^6r'h_{3}(\lambda r')d\theta'dr'\\
    &=P.V.\int_{\mathds{R}_{+}\times[-\pi,\pi]}\bigg(\frac{\partial^4 }{\partial (r')^4}\frac{r-r'\cos(\theta')}{|r^2+(r')^2-2rr'\cos(\theta'))|^{3/2}}\bigg)\lambda^2r'h_{1}(\lambda r')d\theta'dr'.
\end{align*}
Direct computation then gives us that

$$\text{lim}_{r'\rightarrow 0}\int_{0}^{2\pi}\frac{\partial^2 }{\partial (r')^2}\frac{r-r'\cos(\theta')}{|r^2+(r')^2-2rr'\cos(\theta'))|^{3/2}}d\theta'=\frac{3\pi}{ r^4}$$
$$\text{lim}_{r'\rightarrow 0}\int_{0}^{2\pi}\frac{\partial^4 }{\partial (r')^4}\frac{r-r'\cos(\theta')}{|r^2+(r')^2-2rr'\cos(\theta'))|^{3/2}}d\theta'=\frac{135\pi}{4 r^6}$$
and there is strong convergence in $C^{k}$ for $r\in(\frac12,\frac32)$ for any fixed $k$, so
\begin{equation}\label{h2}
    \text{lim}_{\lambda\rightarrow\infty}v_{\theta}(\lambda^4h_{2}(\lambda\cdot))(r,\alpha)'=\int_{\mathds{R}_{+}}\frac{3\pi}{ r^4}\lambda^2r'h_{1}(r')dr'=\frac{3\pi}{ r^4},
\end{equation}
\begin{equation}\label{h3}
    \text{lim}_{\lambda\rightarrow\infty}v_{\theta}(\lambda^6h_{3}(\lambda\cdot))(r,\alpha)=\int_{\mathds{R}_{+}}\frac{135\pi}{4 r^6}\lambda^2r'h_{1}(\lambda r')dr'=\frac{135\pi}{4 r^6},
\end{equation}
with, again, strong convergence in $C^{k}$ for $r\in(\frac12,\frac32)$ for any fixed $k$. Furthermore, if we now consider some $C^{\infty}$ radial function $ p(r)$ such that $p(r)=1$ if $r\geq 2$, $p(r)=0$ if $r\leq 1$, $1\geq p(r)\geq0$, and define $H_{c}$ as
$$\widehat{H_{c}(f(r))}=p(\frac{\hat{r}}{c})\hat{f}(\hat{r})$$ we have that for any $C^{\infty}$ function $H_{c}(f(r))$ tends strongly in $C^{k}$ to $f(r)$ as $c$ tends to $0$, and furthermore $H_{c}(f(r))$ is radial and with Fourier transform supported in $\hat{r}\in(c,\infty)$. Note also that $\hat{f}$ is smooth and therefore so is $p(\frac{\hat{r}}{c})\hat{f}(\hat{r})$.

Using this plus \eqref{h1},\eqref{h2} and \eqref{h3}, we have that we can find smooth  functions $g_{1}(r),g_{2}(r),g_{3}(r)$ with  $\hat{g_{i}}\in C^{\infty}$ supported in $\hat{r}\in(c,\infty)$ such that, for $r\in(\frac12,\frac32)$

$$\frac{\partial }{\partial r}\frac{v_{\theta}(g_{1})}{r}(r=1)=\frac{1}{r^4}+\epsilon,\frac{\partial^2 }{\partial r^2}\frac{v_{\theta}(g_{1})}{r}(r=1)=-\frac{4}{r^5}+\epsilon,\frac{\partial^3 }{\partial r^3}\frac{v_{\theta}(g_{1})}{r}(r=1)=\frac{20}{r^6}+\epsilon,$$
$$\frac{\partial }{\partial r}\frac{v_{\theta}(g_{2})}{r}(r=1)=\frac{1}{r^6}+\epsilon,\frac{\partial^2 }{\partial r^2}\frac{v_{\theta}(g_{2})}{r}(r=1)=-\frac{6}{r^7}+\epsilon,\frac{\partial^3 }{\partial r^3}\frac{v_{\theta}(g_{2})}{r}(r=1)=\frac{42}{r^8}+\epsilon,$$
$$\frac{\partial }{\partial r}\frac{v_{\theta}(g_{3})}{r}(r=1)=\frac{1}{r^8}+\epsilon,\frac{\partial^2 }{\partial r^2}\frac{v_{\theta}(g_{3})}{r}(r=1)=-\frac{8}{r^9}+\epsilon,\frac{\partial^3 }{\partial r^3}\frac{v_{\theta}(g_{3})}{r}(r=1)=\frac{72}{r^{10}}+\epsilon,$$
and since the vectors $(1,4,20),(1,6,42)$ and $(1,8,72)$ are independent, evaluating at $r=1$ and taking $\epsilon$ small finishes the proof.


\end{proof}

\section{The pseudo-solution}
We are now ready to define our pseudo-solution and obtain the necessary properties about it. First, taking 
\begin{equation}\label{nlambda}
    N^{\alpha}\ln{(N)}=\lambda^{2-\beta-\alpha}
\end{equation}
for $t\in[0,\lambda^{-2+\beta}(\ln{(N)})^3]=[0,(N\lambda)^{-\alpha}(\ln{(N)})^2]$, we define

\begin{equation}\label{radial+pert}
    \bar{w}_{N,\beta}(r,\theta,t):=\bar{g}(r,t)+\bar{w}_{pert}(r,\theta,t)
\end{equation}
where

$$\bar{g}(r,t):=\lambda\frac{g(r\lambda,t\lambda^{\alpha})}{\lambda^{\beta}}$$
$$\frac{\partial g(r,t)}{\partial t}=-\Lambda^{\alpha}g(r,t),$$
$$g(r,0)=g(r),$$
\begin{align}\label{wpertdef}
    \bar{w}_{pert}(r,\theta,t):=f(\lambda r) \lambda \frac{\cos(N(\theta+\Theta(r,t))-\int_{0}^{t}\frac{\partial \bar{g}(r,s)}{\partial r}\frac{C_{0}}{{|1+r^2(\partial_{r}\Theta(r,s))^2|^{1/2}}}ds) }{N^{\beta}\lambda^{\beta}}e^{-G(r,t)}
\end{align}
with
$$\Theta(r,t):=-\big(K\frac{v_{\theta}(g(\lambda r,0))}{\lambda r}+\int_{0}^{t}\frac{v_{\theta}(\bar{g}(r,s))}{r}ds\big)$$
$$G(r,t):=K_{\alpha}^{-1}N^{\alpha}\int_{0}^{t}\Big(\frac{1}{r^2}+(\frac{\partial}{\partial r} \Theta(r,s))^2\Big)^{\frac{\alpha}{2}}ds,$$
where $C_{0}$ is the constant from lemma \ref{vr} and $K$, $g(r)$ and $f(r)$ will be fixed later.
Note that, with this choice of $\bar{w}_{pert}$, by taking a derivative in time we can check that 
\begin{align}\label{pseudowpert}
    \p_{t}\bar{w}_{pert}(r,\theta,t)+v_{\theta}(\bar{g}(r,t))\frac{1}{r} \frac{\partial}{\partial \theta}\bar{w}_{pert}+\bar{v}_{r}(\bar{w}_{pert})\frac{\partial}{\partial r}\bar{g}(r,t)+\bar{\Lambda}^{\alpha}\bar{w}_{pert}=0
\end{align}
with
$$\bar{\Lambda}^{\alpha}\bar{w}_{pert}:=K_{\alpha}^{-1}\bar{w}_{pert}|\Big(\frac{N}{r}\Big)^2+\Big(N\frac{\partial\Theta(r,t)}{\partial r}\Big)^2|^{\alpha/2}$$
and
$$\bar{v}_{r}(h_{1}(r)\cos(N\theta+h_{2}(r)):=-C_{0}\frac{h_{1}(r)\sin(N\theta+Nh_{2}(r))}{|1+r^2h_{2}'(r)^2|^{1/2}}.$$ This motivates our choice of $\bar{w}_{pert}$, since it closely resembles the evolution one would obtain when perturbing the solution to SQG $\bar{g}(r,t)$ with some perturbation $w_{pert}$, which would give

$$\p_{t}w_{pert}(r,\theta,t)+v_{\theta}(\bar{g}(r,t))\frac{1}{r} \frac{\partial}{\partial \theta}w_{pert}+v_{r}(w_{pert})\frac{\partial}{\partial r}\bar{g}(r,t)+v(w_{pert})\cdot\nabla w_{pert}+\Lambda^{\alpha}\bar{w}_{pert}=0.$$

Note also that, even though the $\bar{w}_{pert}$ fulfills equation \eqref{pseudowpert} independently of the choice of $\Theta(r,0)$, we choose 
$$\Theta(r,0)=-K\frac{v_{\theta}(g(\lambda r,0))}{\lambda r}$$
so that the derivative in $r$ of $w_{pert}$ dominates the derivative in $\theta$. This will be relevant when studying the behaviour of $G(r,t)$.

Now, to choose our $K$, $g(r)$ and $f(r)$ we start by choosing a smooth radial function $g(r)$ with $\text{supp}(\hat{g})\subset \{\hat{r}\in(c,\infty)\}$ for some small $c$ such that

$$\frac{\partial}{\partial r}\frac{v_{\theta}(g(r))}{r}(r=1)=1,$$
$$\frac{\partial^2}{\partial r^2}\frac{v_{\theta}(g(r))}{r}(r=1)=0,$$
$$\frac{\partial^3}{\partial r^3}\frac{v_{\theta}(g(r))}{r}(r=1)=1,$$
which exists thanks to lemma \ref{initialvel}. Note that this means that there is a small $\frac12>\tilde{\epsilon}>0$ such that $r_{0}\in[1-\tilde{\epsilon},1+\tilde{\epsilon}]$,  implies that 

\begin{equation} \label{minimum}
    \big(\frac{\partial }{\partial r}\frac{v_{\theta}(g(r))}{r}\big)(r=r_{0})-\big(\frac{\partial }{\partial r}\frac{v_{\theta}(g(r))}{r}\big)(r=1)\geq \frac{1}{10}(1-r_{0})^2.
\end{equation}

We would now like to choose $K$ so that, if $N$ is big enough, $G(r,t)$ is such that, for $t\in[0,(\lambda N)^{-\alpha}(\ln(N))^2], r \in[\frac{1-\tilde{\epsilon}}{\lambda},\frac{2-\tilde{\epsilon}}{2\lambda}]\cup[\frac{2+\tilde{\epsilon}}{2\lambda},\frac{1+\tilde{\epsilon}}{\lambda}]$

\begin{equation}\label{maximocentro}
    G(\frac{1}{\lambda},t)\leq G(r,t).
\end{equation}
For this, we note that it is enough to show that, for $t\in[0,(\lambda N)^{-\alpha}(\ln(N))^2]$, $r\in[\frac{1-\tilde{\epsilon}}{\lambda},\frac{2-\tilde{\epsilon}}{2\lambda}]\cup[\frac{2+\tilde{\epsilon}}{2\lambda},\frac{1+\tilde{\epsilon}}{\lambda}]$,

$$\lambda^2+(\frac{\partial}{\partial r} \Theta(r,t)(r=\frac{1}{\lambda}))^2\leq \frac{1}{r^2}+(\frac{\partial}{\partial r} \Theta(r,t))^2.$$

But,  using the definition of $\bar{g}$

\begin{align}\label{vctt}
    \int_{0}^{t}\frac{\partial }{\partial r}\frac{v_{\theta}(\bar{g}(\cdot,\tau))( r)}{r}d\tau=t\lambda^{3-\beta}\frac{\partial }{\partial (r\lambda)}\frac{v_{\theta}(g(r\lambda,0))}{\lambda r}+\int_{0}^{t}\frac{\partial }{\partial r}\frac{v_{\theta}(\bar{g}(\cdot,\tau)-\bar{g}(\cdot,0))( r)}{r}d\tau
\end{align}
and using $t\in[0,(\lambda N)^{-\alpha}(\ln(N))^2], r \in[\frac{1-\tilde{\epsilon}}{\lambda},\frac{1+\tilde{\epsilon}}{\lambda}]$ we have,
\begin{align*}
    &|\int_{0}^{t}\frac{\partial }{\partial r}\frac{v_{\theta}(\bar{g}(\cdot,\tau)-\bar{g}(\cdot,0))( r)}{r}d\tau|=\lambda^{3-\beta}|\int_{0}^{t}\frac{\partial }{\partial (\lambda r)}\frac{v_{\theta}(g(\lambda \cdot,\lambda^{\alpha}\tau)-g(\lambda \cdot,0))( r)}{\lambda r}d\tau|\\
    &\leq C\lambda^{3-\beta} \lambda^{\alpha} t^2\leq Ct\lambda^{3-\beta}  N^{-\alpha}(\ln(N))^2
\end{align*}
so, for big $N$, $r\in[\frac{1-\tilde{\epsilon}}{\lambda},\frac{2-\tilde{\epsilon}}{2\lambda}]\cup[\frac{2+\tilde{\epsilon}}{2\lambda},\frac{1+\tilde{\epsilon}}{\lambda}]$, using the definition of $\Theta(r,t)$, \eqref{vctt} and \eqref{minimum} we have

$$\frac{1}{r^2}+(\frac{\partial}{\partial r} \Theta(r,t))^2\geq \lambda^2(1-\tilde{\epsilon})^{-2}+((K\lambda+t\lambda^{3-\beta})(1+\frac{\tilde{\epsilon}^2}{40})-Ct\lambda^{3-\beta}  N^{-\alpha}(\ln(N))^2)^2$$
and also
$$\lambda^2+(\frac{\partial}{\partial r} \Theta(r,t))^2(r=\frac{1}{\lambda})\leq \lambda^2+(K\lambda+t\lambda^{3-\beta}+Ct\lambda^{(3-\beta)} N^{-\alpha}(\ln(N))^2)^2$$
so by taking $K,N$ big (and therefore $\lambda$ big), gives us that \eqref{maximocentro} is fulfilled.

We now take $f(r)$ a smooth function with support in $r\in [1-\tilde{\epsilon},1+\tilde{\epsilon}]$ the small interval fulfilling \eqref{maximocentro}, and such that $f(r)=1$ if $r\in[1-\frac{\tilde{\epsilon}}{2},1+\frac{\tilde{\epsilon}}{2}]$, $f(r)\leq 1$. Note that, since the minimum of $G(r,t)1_{r\in[\frac{1}{\lambda}(1-\tilde{\epsilon}),\frac{1}{\lambda}(1+\tilde{\epsilon})]}$ is in the interval $r\in[\frac{1}{\lambda}(1-\frac{\tilde{\epsilon}}{2}),\frac{1}{\lambda}(1+\frac{\tilde{\epsilon}}{2})]$, this ensures that

\begin{equation}\label{maximoproducto}
    ||\bar{w}_{pert}||_{L^\infty}= \frac{||f(\lambda r)||_{L^{\infty}}}{N^{\beta}\lambda^{\beta-1}}||1_{\text{supp} f(\lambda r)}e^{-G(r,t)}||_{L^{\infty}}.
\end{equation}

Note that, by simply adding the evolution equations for $\bar{g}(r,t)$, $\bar{w}_{pert}$ \eqref{wpertdef}, we obtain that the pseudo-solution fulfills the evolution equation

$$\frac{\partial \bar{w}_{N,\beta}}{\partial t}+v(\bar{g}(r,t))\cdot\nabla(\bar{w}_{N,\beta})+\bar{v}_{r}(\bar{w}_{pert})\frac{\partial}{\partial r} \bar{g}(r,t)+\Lambda^{\alpha}(\bar{g}(r,t))+\bar{\Lambda}^{\alpha}(\bar{w}_{pert})=0$$
so that
\begin{equation}\label{pseudoevolucion}
    \frac{\partial \bar{w}_{N,\beta}}{\partial t}+v(\bar{w}_{N,\beta})\cdot\nabla(\bar{w}_{N,\beta})+\Lambda^{\alpha}(\bar{w}_{N,\beta})+F_{N,\beta}(x,t)=0
\end{equation}
with 
\begin{equation}\label{F123}
    F_{N,\beta}(x,t):=F_{1}(x,t)+F_{2}(x,t)+F_{3}(x,t)
\end{equation}

$$F_{1}(x,t)=(\bar{\Lambda}^{\alpha}-\Lambda^{\alpha})(\bar{w}_{pert}),$$
$$F_{2}(x,t)=-v(\bar{w}_{pert})\cdot\nabla(\bar{w}_{pert})$$
$$F_{3}(x,t)=(\bar{v}(\bar{w}_{pert})-v(\bar{w}_{pert}))\cdot \nabla \bar{g}(r,t).$$

Next we need to show that $F_{N,\beta}$ is small in suitable Sobolev spaces.

\subsection{Bounds on the error term $F_{N,\beta}$}



\begin{lemma}
For any given $\epsilon>0$, there is $N_{0}$ such that if $N\geq N_{0}$, then for $F_{N,\beta}$ given by \eqref{pseudoevolucion} and $s\in[0,2]$, we have that

$$||F_{N,\beta}||_{H^{s}}\leq C_{\epsilon} \frac{N^{\epsilon} (\lambda N)^{s+\alpha}}{N^{\beta+1}\lambda^{\beta}}$$
with $N,\lambda$ as in \eqref{nlambda}. 
\end{lemma}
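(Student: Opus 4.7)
The plan is to decompose $F_{N,\beta} = F_1 + F_2 + F_3$ as in \eqref{F123} and estimate each piece separately in $H^s$ for $s\in[0,2]$. A preliminary check, used throughout, is that $\bar{w}_{\mathrm{pert}}$ has the form $A(r)\cos(N\psi(r,\theta,t))$ with amplitude $A(r) = f(\lambda r)\lambda (N^\beta\lambda^\beta)^{-1}e^{-G(r,t)}$ and phase satisfying the $(\ln N)^P$-type $C^5$ bounds required by Corollary \ref{+alpha}, Corollary \ref{vcuad}, Lemma \ref{vr}, and Lemma \ref{-alpha}, after the natural rescaling $r\mapsto r/\lambda$. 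This follows from $G\lesssim(\ln N)^3$ on the time interval under consideration, the scaling $\bar{g}=\lambda^{1-\beta}g(\lambda r,\lambda^\alpha t)$, and the smoothness of $g$. In particular $\|\bar{w}_{\mathrm{pert}}\|_{L^\infty}\leq 1/(N^\beta\lambda^{\beta-1})$.

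For $F_1$ and $F_3$ the estimates are direct. Corollary \ref{+alpha} immediately gives
\[
\|F_1\|_{H^s}\leq C_\epsilon\lambda^{s+\alpha-1}N^{s+\alpha-1+\epsilon}\|\bar{w}_{\mathrm{pert}}\|_{L^\infty}\leq C_\epsilon\frac{N^\epsilon(N\lambda)^{s+\alpha}}{N^{\beta+1}\lambda^\beta},
\]
matching the target. For $F_3=(\bar{v}_r-v_r)(\bar{w}_{\mathrm{pert}})\,\partial_r\bar{g}$ one uses Lemma \ref{vr} to bound $\|\bar{v}_r-v_r\|_{H^s}$ by $C_\epsilon\lambda^{s-1}N^{s-1+\epsilon}\|\bar{w}_{\mathrm{pert}}\|_{L^\infty}$, combines this with the $H^s$-product estimate of $\partial_r\bar{g}$ via $\|\partial_r^{k+1}\bar{g}\|_{L^\infty}\lesssim\lambda^{k+2-\beta}$, and uses $\lambda^{2-\beta-\alpha}=N^\alpha\ln N$ from \eqref{nlambda} to absorb the logarithmic loss into $N^\epsilon$.

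The main obstacle is $F_2=-v(\bar{w}_{\mathrm{pert}})\cdot\nabla\bar{w}_{\mathrm{pert}}$: a naive product estimate gives $\|F_2\|_{L^2}\lesssim N\|\bar{w}_{\mathrm{pert}}\|_{L^\infty}^2$, already exceeding the target at $s=0$. I would split the velocity as $v=\nabla^\perp\Lambda^{-1}=\nabla^\perp\bar{\Lambda}^{-1}+\nabla^\perp(\Lambda^{-1}-\bar{\Lambda}^{-1})$, producing
\[
F_2 = -J(\bar{\Lambda}^{-1}\bar{w}_{\mathrm{pert}},\bar{w}_{\mathrm{pert}}) - J((\Lambda^{-1}-\bar{\Lambda}^{-1})\bar{w}_{\mathrm{pert}},\bar{w}_{\mathrm{pert}}),
\]
with $J(a,b)=\partial_{x_1}a\,\partial_{x_2}b-\partial_{x_2}a\,\partial_{x_1}b$. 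For the main term, since $\bar{\Lambda}^{-1}$ acts as multiplication by a slowly varying factor, $\bar{\Lambda}^{-1}\bar{w}_{\mathrm{pert}}=B(r)\cos(N\psi)$ shares the phase of $\bar{w}_{\mathrm{pert}}=A(r)\cos(N\psi)$, and a short polar-coordinate computation gives
\[
J(\bar{\Lambda}^{-1}\bar{w}_{\mathrm{pert}},\bar{w}_{\mathrm{pert}})=\frac{N}{2r}(A'B-AB')\sin(2N\psi),
\]
with no $\theta$-independent component. The algebraic identity $A'B-AB'=AB\cdot(D'/D)$ with $B=K_\alpha A/(ND)$ and $D=\sqrt{r^{-2}+\Theta_r^2}$ extracts a factor $1/N$ of cancellation; since $D'/D\lesssim\lambda$ this yields $\|F_2^{\mathrm{main}}\|_{H^s}\lesssim(N\lambda)^s\|\bar{w}_{\mathrm{pert}}\|_{L^\infty}^2$, and combining with \eqref{nlambda} the ratio to the target is $N^{1-\beta}\ln N\to 0$ thanks to $\beta>1$.

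The remainder $J((\Lambda^{-1}-\bar{\Lambda}^{-1})\bar{w}_{\mathrm{pert}},\bar{w}_{\mathrm{pert}})$ is the technically delicate step: one pairs the sharp $H^\sigma$ estimates on $(\Lambda^{-1}-\bar{\Lambda}^{-1})\bar{w}_{\mathrm{pert}}$ provided by Lemma \ref{-alpha} with Sobolev bounds on $\bar{w}_{\mathrm{pert}}$, exploits that the velocity error is divergence-free to rewrite the product in divergence form, and uses the oscillatory trigonometric structure inherited from $\bar{w}_{\mathrm{pert}}$ to close the estimate. Closing it uniformly up to the endpoint $s=2$ is what I expect to demand the most care. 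Finally, summing the three contributions and interpolating in $s$ between integer endpoints produces the stated bound. The hypothesis $\beta>1$ enters essentially in the Jacobian cancellation for the main part of $F_2$; without it the bilinear contribution cannot be dominated by the linear dissipation approximation.
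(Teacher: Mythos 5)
Your decomposition and the treatment of $F_{1}$, $F_{3}$, and the main part of $F_{2}$ all match the paper's argument: Corollary~\ref{+alpha} for $F_{1}$, a Leibniz estimate pairing Lemma~\ref{vr} with the scaling $\|\partial_{r}^{k}\bar{g}\|_{L^{\infty}}\lesssim\lambda^{k+1-\beta}$ for $F_{3}$, and for the leading part of $F_{2}$ the same Jacobian cancellation---your polar identity $J(\bar{\Lambda}^{-1}\bar{w}_{\mathrm{pert}},\bar{w}_{\mathrm{pert}})=\tfrac{N}{2r}(A'B-AB')\sin(2N\psi)$ is exactly the paper's Cartesian observation that $\nabla^{\perp}\bar{w}_{\mathrm{pert}}\cdot\nabla\bar{w}_{\mathrm{pert}}=0$, so the derivative falls on the slowly varying multiplier $|(\tfrac{N}{r})^{2}+(N\Theta_{r})^{2}|^{-1/2}$.

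The one place where your expectation diverges from what actually happens is the remainder $J\bigl((\Lambda^{-1}-\bar{\Lambda}^{-1})\bar{w}_{\mathrm{pert}},\bar{w}_{\mathrm{pert}}\bigr)=(\bar{v}-v)(\bar{w}_{\mathrm{pert}})\cdot\nabla\bar{w}_{\mathrm{pert}}$. You flag it as the delicate step, needing divergence-form rewriting and exploiting the oscillatory phase. In the paper this piece is handled by a plain Leibniz product estimate,
\[
\|(\bar{v}-v)(\bar{w}_{\mathrm{pert}})\cdot\nabla\bar{w}_{\mathrm{pert}}\|_{H^{s}}\leq C\sum_{i=0}^{s}\|(\bar{v}-v)(\bar{w}_{\mathrm{pert}})\|_{H^{i}}\,\|\nabla\bar{w}_{\mathrm{pert}}\|_{C^{s-i}},
\]
with no further cancellation: Corollary~\ref{vcuad} already supplies the factor $N^{-1+\epsilon}$ in $\|(\bar{v}-v)(\bar{w}_{\mathrm{pert}})\|_{H^{i}}$, and together with the extra smallness $\|\bar{w}_{\mathrm{pert}}\|_{L^\infty}\sim(N\lambda)^{1-\beta}/N$ the arithmetic (using $\lambda^{2-\beta-\alpha}=N^{\alpha}\ln N$) closes with a margin $N^{1-\beta}\ln N\to 0$. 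So the heuristic ``velocity error is $O(N^{-1})$, gradient is $O(N\lambda)$, leaving $O(\lambda\|\bar{w}_{\mathrm{pert}}\|_{L^{\infty}}^{2})$'' already suffices. Relatedly, $\beta>1$ is not used only in the Jacobian cancellation: it is also needed (in the same way, through $N^{1-\beta}$) in this remainder estimate; $F_{1}$ and $F_{3}$ on the other hand do not require it. With those corrections your plan completes without the extra machinery you were bracing for.
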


\begin{proof}
    To prove this, we will just show that
    $$||F_{i}||_{H^{s}}\leq C_{\epsilon} \frac{N^{\epsilon} (\lambda N)^{s+\alpha}}{N^{\beta+1}\lambda^{\beta}}.$$
    for $i=1,2,3$, with $F_{i}$ defined as in (\ref{F123}). We first focus on $F_{1}(x,t)$. To bound the $H^s$ norms of this function, we will use corollary \ref{+alpha}, so for that we need to check if $\bar{w}_{pert}$ fulfils the hypothesis of the lemma. For this we note that 

    $$\bar{w}_{pert}(\frac{r}{\lambda},\theta,t)=f( r) \lambda \frac{\cos(N(\theta+\Theta(\frac{r}{\lambda},t))-\int_{0}^{t}\lambda^{2-\beta} \frac{\partial g(r,\lambda^{\alpha}s)}{\partial r}\frac{C_{0}}{{|1+(r\partial_{r}\Theta(\frac{r}{\lambda},s))^2|^{1/2}}}ds) }{N^{\beta}\lambda^{\beta}}e^{-G(\frac{r}{\lambda},t)}$$
    $$G(\frac{r}{\lambda},t)=K_{\alpha}^{-1}(N\lambda)^{\alpha}\int_{0}^{t}\Big(\frac{1}{r^2}+(K\frac{\partial}{\partial r}\frac{v_{\theta}(g( r,0))}{ r}+\int_{0}^{s}\lambda^{2-\beta}\frac{\partial }{\partial r}\frac{v_{\theta}(g(\cdot,\lambda^{\alpha}\tau))( r)}{r}d\tau)^2\Big)^{\frac{\alpha}{2}}ds,$$
    $$\Theta(\frac{r}{\lambda},t)=-\Big(K \frac{v_{\theta}(g(r,0))}{r}+\lambda^{2-\beta}\int_{0}^{t}\frac{v_{\theta}(g(r,\lambda^{\alpha}s))}{r}ds\Big)$$

    Since $f(r)e^{-G(\frac{r}{\lambda},t)}$ has support in $r\in(\frac{1}{2},\frac{3}{2})$, if we show that 
    \begin{equation}\label{c5linfty}
        ||f(r)e^{-G(\frac{r}{\lambda},t)}||_{C^{5}}\leq ||f(r)e^{-G(\frac{r}{\lambda},t)}||_{L^{\infty}}(\ln(N))^{P},
    \end{equation}
        
    $$||K\frac{v_{\theta}(g( r,0))}{ r}+\int_{0}^{t}\frac{\lambda^{2-\beta} v_{\theta}(g(r,\lambda^{\alpha}s))}{r}ds||_{C^5}\leq (\ln(N))^{P},$$

    $$||\int_{0}^{t}\lambda^{2-\beta} \frac{\partial g(r,\lambda^{\alpha}s)}{\partial r}\frac{C_{0}}{{|1+(r\partial_{r}\Theta(\frac{r}{\lambda},s))^2|^{1/2}}}ds)||_{C^{5}}\leq (\ln(N))^{P}$$
    we can apply corollary \ref{+alpha}.

    For \eqref{c5linfty} we note that, for a function of the form $\tilde{h}(x)=h_{1}(x)e^{h_{2}(x)}$, we have the bound 
    $$||\tilde{h}(x)||_{C^{i}}\leq C_{i}||1_{\text{supp}(h_{1}(x))}e^{h_{2}(x)}||_{L^{\infty}}||h_{1}||_{C^{i}}(1+||h_{2}(x)||_{C^{i}})^{i}$$
    but,  using also that $f(r)$ is a fixed $C^{\infty}$ function and \eqref{maximoproducto}
\begin{align*}
    &\frac{||f(r)e^{-G(\frac{r}{\lambda},t)}||_{C^{5}}}{||f(r)e^{-G(\frac{r}{\lambda},t)}||_{L^{\infty}}}\\
    &\leq C\frac{||1_{\text{supp}f(r)}e^{-G(\frac{r}{\lambda})(x)}||_{L^{\infty}}||f(r)||_{C^{5}}(1+||G(\frac{r}{\lambda},t)||_{C^{5}(1_{\text{supp} f(r)})})^5}{||f(r)||_{L^{\infty}}||1_{\text{supp} f(r)}e^{-G(\frac{r}{\lambda},t)}||_{L^{\infty}}}\\
    &\leq C(1+||G(\frac{r}{\lambda},t)||_{C^{5}(1_{\text{supp} f(r)})})^5
\end{align*}
    so it is enough to obtain bounds for $||G(\frac{r}{\lambda},t)||_{C^{5}(1_{\text{supp} f(r)})}$. But, for $t\in[0,(\lambda N)^{-\alpha}(\ln(N))^2]$

    \begin{align*}
        &||G(\frac{r}{\lambda},t)||_{C^{5}(1_{\text{supp} f(r)})}\\
        &=||K_{\alpha}^{-1}(N\lambda)^{\alpha}\int_{0}^{t}\Big(\frac{1}{r^2}+(K\frac{v_{\theta}(g( r,0))}{ r}+\int_{0}^{s}\lambda^{2-\beta}\frac{\partial }{\partial r}v_{\theta}(g(\cdot,\lambda^{\alpha}\tau))( r)d\tau)^2\Big)^{\frac{\alpha}{2}}||_{C^{5}(1_{\text{supp} f(r)})}\\
        &\leq C(\ln(N))^2\times\\
        &\text{sup}_{s\in[0,(N\lambda)^{-\alpha}(\ln(N))^2]}||\Big(\frac{1}{r^2}+(K\frac{v_{\theta}(g( r,0))}{ r}+\int_{0}^{s}\lambda^{2-\beta}\frac{\partial }{\partial r}v_{\theta}(g(\cdot,\lambda^{\alpha}\tau))( r)d\tau)^2\Big)^{\frac{\alpha}{2}}||_{C^{5}(1_{\text{supp} f(r)}))}\\
        &\leq C(\ln(N))^2\times\\
        &\text{sup}_{s\in[0,(N\lambda)^{-\alpha}(\ln(N))^2]}(1+||\frac{1}{r^2}+(K\frac{v_{\theta}(g( r,0))}{ r}+\int_{0}^{s}\lambda^{2-\beta}\frac{\partial }{\partial r}v_{\theta}(g(\cdot,\lambda^{\alpha}\tau))( r)d\tau)^2||_{C^{5}(1_{\text{supp} f(r)}))})^{5}\\
        &\leq C (\ln(N))^{32}
    \end{align*}
where we used that  $\frac{1}{r^2}+(K\frac{v_{\theta}(g( r,0))}{ r}+\int_{0}^{s}\lambda^{2-\beta}\frac{\partial }{\partial r}v_{\theta}(g(\cdot,\lambda^{\alpha}\tau))( r)d\tau)^2>\frac{1}{4}$ for $r\in \text{supp}(f(r))$ on the fifth line.  

On the other hand, for $t\in[0,(\lambda N)^{-\alpha}(\ln(N))^2]$ we have

$$||K\frac{v_{\theta}(g( r,0))}{ r}+\int_{0}^{t}\frac{\lambda^{2-\beta} v_{\theta}(g(r,\lambda^{\alpha}s))}{r}ds||_{C^{5}}\leq C(\ln(N))^3$$
\begin{align*}
    &||\int_{0}^{t}\lambda^{2-\beta} \frac{\partial g(r,\lambda^{\alpha}s)}{\partial r}\frac{C_{0}}{{|1+r^2(\partial_{r}\Theta(\frac{r}{\lambda},s))^2|^{1/2}}}ds||_{C^{5}}\\
    &\leq C(\ln(N))^3\text{sup}_{s\in[0,(\lambda N)^{-\alpha}(\ln(N))^2]}||\frac{C_{0}}{{|1+r^2(\partial_{r}\Theta(\frac{r}{\lambda},s))^2|^{1/2}}}||_{C^{5}}\leq C (\ln(N))^{33}
\end{align*}
$$$$
so we can apply corollary \ref{+alpha} and obtain that

$$||(\Lambda^{\alpha}-\bar{\Lambda}^{\alpha})\bar{w}_{pert}(r,\theta)||_{H^{s}}\leq C\lambda^{-1+s+\alpha}N^{-1+\alpha+\epsilon+s}||\frac{f(\lambda r)}{N^{\beta}\lambda^{\beta-1}} e^{-G(r,t)}||_{L^{\infty}}\leq C\frac{\lambda^{s+\alpha}N^{\epsilon+\alpha+s}}{\lambda^{\beta}N^{\beta+1}}.$$

Note also that, all the bounds we have obtained also give us, for any $\epsilon>0$, $s\in[0,5]$ and $N$ big

\begin{equation}\label{sobolevpert}
    ||\bar{w}_{pert}(r,\theta)||_{H^{s}}\leq C\frac{\lambda^{s}N^{\epsilon+s}}{\lambda^{\beta}N^{\beta}},||\bar{w}_{pert}(r,\theta)||_{C^{s}}\leq C\frac{\lambda^{s}N^{\epsilon+s}}{\lambda^{\beta-1}N^{\beta}}.
\end{equation}

Next, for $F_{2}(x,t)$, we note that

$$||F_{2}(x,t)||_{H^{s}}\leq ||(\bar{v}-v)(\bar{w}_{pert})\cdot\nabla(\bar{w}_{pert})||_{H^{s}}+||\bar{v}(\bar{w}_{pert})\cdot\nabla(\bar{w}_{pert})||_{H^{s}}.$$

But, since we already checked the hypothesis for lemma \ref{vcuad}, we have, for $s=0,2 $ and any $\epsilon>0$

$$||(\bar{v}-v)(\bar{w}_{pert})\cdot\nabla(\bar{w}_{pert})||_{H^{s}}\leq C\sum_{i=0}^{s}||(\bar{v}-v)(\bar{w}_{pert})||_{H^{i}}||\nabla(\bar{w}_{pert})||_{C^{s-i}}\leq C\frac{N^{2\epsilon}(\lambda N)^{s}}{N^{\beta+1}\lambda^{\beta}} \lambda^{2-\beta}$$
$$=C\frac{N^{2\epsilon}(\lambda N)^{s+\alpha}}{N^{\beta+1}\lambda^{\beta}}\ln{(N)}\leq C\frac{N^{3\epsilon}(\lambda N)^{s+\alpha}}{N^{\beta+1}\lambda^{\beta}}$$
and interpolation gives the bound for $s\in(0,2)$. On the other hand we have

$$\bar{v}(\bar{w}_{pert})\cdot\nabla(\bar{w}_{pert})=K_{1}\big(\frac{\partial }{\partial x_{1}}\frac{1}{|(\frac{N}{r})^2+(N\partial_{r}\Theta(r,t))^2|^{1/2}}\big) \bar{w}_{pert}(r,\theta,t)\frac{\partial }{\partial x_{2}}\bar{w}_{pert}(r,\theta,t)$$
$$-K_{1}\big(\frac{\partial }{\partial x_{2}}\frac{1}{|(\frac{N}{r})^2+(N\partial_{r}\Theta(r,t))^2|^{1/2}}\big) \bar{w}_{pert}(r,\theta,t)\frac{\partial }{\partial x_{1}}\bar{w}_{pert}(r,\theta,t)$$
and therefore, for $s\in[0,2]$

\begin{align*}
    &||\bar{v}(\bar{w}_{pert})\cdot\nabla(\bar{w}_{pert})||_{H^{s}}\\
    &\leq C\sum_{i=0}^{s}||\nabla\big(\frac{1}{|(\frac{N}{r})^2+(N\partial_{r}\Theta(r,t))^2|^{1/2}}\big)\bar{w}_{pert}(r,\theta)||_{C^{i}}||\bar{w}_{pert}(r,\theta)||_{H^{s+1-i}}\\
    &\leq C \sum_{i=0}^{s}\lambda^{i+1-\beta}N^{i-\beta-1+2\epsilon}(\lambda N)^{s+1-i-\beta+\epsilon}=C\lambda^{1-\beta}N^{-\beta-1+2\epsilon}(\lambda N)^{s+1-\beta+\epsilon}\\
    &=C\frac{(\lambda N)^{s} N^{3\epsilon}}{\lambda^{\beta}N^{\beta+1}}\lambda^{2-\beta} N^{1-\beta}\leq C\frac{(\lambda N)^{s+\alpha} N^{3\epsilon}}{\lambda^{\beta}N^{\beta+1}}.
\end{align*}

Finally, for $F_{3}(x,t)$, we just have, for any $s\in[0,2]$, $\epsilon>0$, for $N$ big enough

\begin{align*}
    &||(\bar{v}-v)(\bar{w}_{pert})\cdot \nabla \bar{g}(r,t)||_{H^{s}}\leq C\sum_{i=0}^{s}||(\bar{v}-v)(\bar{w}_{pert})||_{H^{i}}||\bar{g}||_{C^{s+1-i}}\\
    &\leq C \frac{N^{\epsilon} (N\lambda)^{i}}{\lambda^{\beta}N^{\beta+1}}\lambda^{s+2-i-\beta}\leq C \frac{N^{\epsilon} (N\lambda)^{s}}{\lambda^{\beta}N^{\beta+1}}\lambda^{2-\beta}\leq C \frac{N^{2\epsilon} (N\lambda)^{s+\alpha}}{\lambda^{\beta}N^{\beta+1}}
\end{align*}
and this finishes the proof.

\end{proof}

\subsection{Using the pseudo-solution to control the solution}
For the pseudo-solution $\bar{w}_{N,\beta}$ to be a useful tool, we need to show that, if we define $w_{N,\beta}$, the solution to \eqref{SQG} with the same initial conditions as $\bar{w}_{N,\beta}$, then $w_{N,\beta}\approx \bar{w}_{N,\beta}$. This sub-section will be devoted to show this.

\begin{lemma}\label{shortcontrol}
There is $N_{0},\epsilon_{0},\delta >0$ such that if $N\geq N_{0}$, $0<\epsilon\leq \epsilon_{0}$ then  for any $t\in[0,(N\lambda)^{-\alpha}(\ln(N))^2]$ we have that

\begin{equation}\label{controll2}
    ||w_{N,\beta}(x,t)-\bar{w}_{N,\beta}(x,t)||_{L^2}\leq C_{\epsilon}\frac{N^{\epsilon}}{N^{\beta+1}\lambda^{\beta}},
\end{equation}

\begin{equation}\label{controlhs}
    ||w_{N,\beta}(x,t)-\bar{w}_{N,\beta}(x,t)||_{H^{2-\alpha+\delta}}\leq 1,
\end{equation}
with $\bar{w}_{N,\beta}$ as in \eqref{radial+pert} and $w_{N,\beta}$ a solution to \eqref{SQG} with the same initial conditions as $\bar{w}_{N,\beta}$.

\end{lemma}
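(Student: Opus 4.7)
The plan is to run a bootstrap argument on the error $e:=w_{N,\beta}-\bar{w}_{N,\beta}$, which vanishes at $t=0$ and, subtracting \eqref{SQG} from \eqref{pseudoevolucion}, satisfies
$$\p_{t}e+v(\bar{w}_{N,\beta})\cdot\nabla e+v(e)\cdot\nabla\bar{w}_{N,\beta}+v(e)\cdot\nabla e+\Lambda^{\alpha}e=F_{N,\beta}.$$
Let $T^{*}$ be the largest time in $[0,(N\lambda)^{-\alpha}(\ln N)^{2}]$ on which both \eqref{controll2} and \eqref{controlhs} hold with half the stated constants. Smoothness of $\bar{w}_{N,\beta}(\cdot,0)$ and local well-posedness of $\alpha$-SQG in $H^{s}$ for $s>2-\alpha$ give $T^{*}>0$, and it will suffice to show that the two bootstrap inequalities are quantitatively improved on $[0,T^{*}]$, allowing a continuation up to the right endpoint.

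For \eqref{controlhs} I would run a standard $H^{2-\alpha+\delta}$ energy estimate combined with the dissipation $\|\Lambda^{\alpha/2}e\|_{H^{2-\alpha+\delta}}^{2}$. Since $2-\alpha+\delta$ lies strictly above the LWP threshold, Kato--Ponce/Moser-type commutator estimates give
$$\tfrac{d}{dt}\|e\|_{H^{2-\alpha+\delta}}^{2}\lesssim\bigl(\|w_{N,\beta}\|_{H^{2-\alpha+\delta}}+\|\bar{w}_{N,\beta}\|_{H^{2-\alpha+\delta}}\bigr)\|e\|_{H^{2-\alpha+\delta}}^{2}+\|F_{N,\beta}\|_{H^{2-\alpha+\delta}}\|e\|_{H^{2-\alpha+\delta}}.$$
The coefficient of $\|e\|^{2}$ is controlled using \eqref{sobolevpert}, the forcing by the previous lemma, and the resulting exponential factor $\exp((\lambda N)^{-\alpha}(\ln N)^{2}\cdot\|\bar{w}_{N,\beta}\|_{H^{2-\alpha+\delta}})$ stays bounded because the constraints $\beta\in(1,2-\alpha)$ and the scaling \eqref{nlambda} force the time-norm product to grow only polylogarithmically once $\delta$ is taken small enough in terms of $\beta-1$ and $2-\alpha-\beta$.

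For \eqref{controll2} the energy identity reads
$$\tfrac12\tfrac{d}{dt}\|e\|_{L^{2}}^{2}+\|\Lambda^{\alpha/2}e\|_{L^{2}}^{2}=-\!\int v(e)\cdot\nabla\bar{w}_{N,\beta}\,e\,dx+\int F_{N,\beta}\,e\,dx,$$
the transport terms vanishing by divergence freedom. Splitting $\bar{w}_{N,\beta}=\bar{g}+\bar{w}_{pert}$, the radial piece $\int v(e)\cdot\nabla\bar{g}\cdot e$ is treated using the polar identity $v(e)\cdot\hat{r}=-r^{-1}\p_{\theta}\Lambda^{-1}e$, so that only the non-radial modes of $e$ contribute; the resulting loss $\|\p_{r}\bar{g}\|_{L^{\infty}}\sim\lambda^{2-\beta}=(\lambda N)^{\alpha}\ln N$ is absorbed by the dissipation $\|\Lambda^{\alpha/2}e\|_{L^{2}}^{2}$ up to a harmless logarithmic factor. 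The oscillatory piece is rewritten as $-\int\bar{w}_{pert}\,v(e)\cdot\nabla e\,dx$ by integrating by parts, and then Corollary~\ref{commutardorlambda} (applied after decomposing the test modes as in lemma~\ref{conmutador}) replaces $\bar{w}_{pert}\,v(e)$ by $v(\bar{w}_{pert}\,e)$ modulo an error of order $N^{-1+\epsilon}\|\bar{w}_{pert}\|_{C^{1}}\|e\|_{L^{2}}$. The term $\int v(\bar{w}_{pert}\,e)\cdot\nabla e\,dx$ vanishes by divergence freedom, and the remaining commutator remainder combined with the bound $\|F_{N,\beta}\|_{L^{2}}\leq C_{\epsilon}N^{\epsilon}(\lambda N)^{\alpha}/(N^{\beta+1}\lambda^{\beta})$ yields, via Gr\"onwall on the short time window, exactly the $N^{\epsilon}/(N^{\beta+1}\lambda^{\beta})$ bound.

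The principal obstacle is extracting the $N^{-1+\epsilon}$ commutator gain in the oscillatory interaction. Corollary~\ref{commutardorlambda} is tailored to multipliers of the form $g(r)\sin\theta$ or $g(r)\cos\theta$ times an oscillatory profile, not to a generic $e$, so the real work will lie in decomposing $e$ into angular Fourier modes and summing (using the $H^{2-\alpha+\delta}$ a priori control to dominate the mode sum), or in re-running the stationary-phase arguments of lemma~\ref{conmutador} directly for the pair $(\bar{w}_{pert},e)$. Because the target $L^{2}$ bound sits precisely at the scale $\|\bar{w}_{pert}\|_{L^{2}}/N$ with only a polylogarithmic margin, every power of $N$ and $\lambda$ must be tracked exactly, and this is where the two bootstrap bounds feed into each other.
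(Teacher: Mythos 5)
Your global architecture (bootstrap, error evolution equation, $L^2$ first then $H^{2-\alpha+\delta}$) matches the paper, but the two individual estimates are carried out by mechanisms that do not close, and the key devices the paper actually uses are either misplaced or missing.

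\textbf{The $H^{2-\alpha+\delta}$ step.} Your proposed Moser/Gr\"onwall inequality
\[
\tfrac{d}{dt}\|e\|^2_{H^{s}}\lesssim\big(\|w_{N,\beta}\|_{H^{s}}+\|\bar w_{N,\beta}\|_{H^{s}}\big)\|e\|^2_{H^{s}}+\|F_{N,\beta}\|_{H^{s}}\|e\|_{H^{s}},\qquad s=2-\alpha+\delta,
\]
does not yield a bounded exponential over $[0,(N\lambda)^{-\alpha}(\ln N)^2]$. From \eqref{sobolevpert}, $\|\bar w_{pert}\|_{H^{s}}\sim (\lambda N)^{s-\beta}N^{\epsilon}$; with the scaling \eqref{nlambda}, $(\lambda N)^{2-\alpha-\beta}=N^{2-\beta}\ln N$, so
\[
(\lambda N)^{-\alpha}(\ln N)^2\,\|\bar w_{pert}\|_{H^{s}}\;\sim\;(\lambda N)^{2-2\alpha+\delta-\beta}(\ln N)^2 N^{\epsilon}\;\sim\;(N^{2-\beta}\ln N)^{\tfrac{2-2\alpha-\beta}{2-\alpha-\beta}}\,(\lambda N)^{\delta}(\ln N)^2 N^{\epsilon}.
\]
When $\alpha<1/2$ and $\beta$ is close to $1$ (so $\beta<2-2\alpha$), the exponent of $N$ is strictly positive and no choice of small $\delta,\epsilon$ saves it; the Gr\"onwall factor blows up. The paper closes the $H^{s}$ bound by a genuinely different mechanism: it passes to the time-integrated form \eqref{evolucionhs}, keeps the dissipation term $\int_0^t\|\Lambda^{s+\alpha/2}W\|^2_{L^2}\,d\tau$ explicitly, and crucially feeds in the bound $\int_0^t\|W\|^2_{H^{\alpha/2}}\,d\tau\lesssim N^{\epsilon}(N^{\beta+1}\lambda^{\beta})^{-2}$ from the $L^2$ energy identity \eqref{controlha}. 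Each background term is then split via the commutator lemma \ref{katoponce}, and each piece is distributed by H\"older in time between the integrated $H^{\alpha/2}$ and $H^{s+\alpha/2}$ quantities so that the dissipation absorbs the dangerous factors. Your sketch omits both the integrated $H^{\alpha/2}$ control and the use of the spatial dissipation at level $s+\alpha/2$, which are exactly what replaces the failed Gr\"onwall.

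\textbf{The $L^2$ step.} You propose to absorb the term $\int e\,v(e)\cdot\nabla\bar g$ by the dissipation, citing $\|\partial_r\bar g\|_{L^\infty}\sim\lambda^{2-\beta}=(\lambda N)^{\alpha}\ln N$. But absorbing $(\lambda N)^{\alpha}\|e\|_{L^2}^2$ by $\|\Lambda^{\alpha/2}e\|^2_{L^2}$ would require a reverse Poincar\'e inequality that $e$ does not satisfy. The paper avoids this: it decomposes $\nabla\bar g$ in a dyadic annular partition $\sum_k f_k$, decomposes $W$ in its angular Fourier modes $W_j$ (which are supported at angular frequencies $jN$ because both the true and pseudo data sit at modes $0$ and $N$), and then applies Corollary \ref{commutardorlambda} at frequency $jN$ with multiplier $f_k\,\partial_{x_i}\bar g=g(r)\sin\theta$ or $g(r)\cos\theta$. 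This is what produces the essential $N^{-1+\epsilon}$ gain that cancels $(\lambda N)^{\alpha}\ln N$ against the time window. Conversely, you apply the commutator corollary to the $\bar w_{pert}$ piece, with $\bar w_{pert}$ playing the role of the multiplier, but $\bar w_{pert}\sim f(\lambda r)\cos(N\theta+\cdots)$ is not of the form $g(r)\sin\theta$, so the corollary as stated does not apply there; and it is also unnecessary, because the $\bar w_{pert}$ term is handled simply via $\|\bar w_{pert}\|_{C^1}\sim\lambda^{2-\beta}N^{1-\beta+\epsilon}$ together with the hypothesis $\beta>1$. In short, you put the commutator estimate where it is not needed (and where it does not apply) and leave it out where it is indispensable.
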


\begin{proof}
    First, we recall that, due the scalling properties of $C^{k}$ and the definition of $\bar{g}(r,t)$, we have that
    \begin{equation}\label{boundgbar}
        ||\bar{g}(r,t)||_{C^{k}}\leq C \lambda^{k+1-\beta}.
    \end{equation}
    Now, we note that, since \eqref{SQG} is locally well-posed in $H^{2-\alpha+\delta}$, using continuity of \ $||w_{N,\beta}(x,t)-\bar{w}_{N,\beta}(x,t)||_{H^{2-\alpha+\delta}}$ we know that \eqref{controlhs} will hold for at least some short time period $[0,T_{crit}]$. Note also that, for this times, $w_{N,\beta}(x,t)\in C^{\infty}$, since the initial conditions are smooth and we are controlling a sub-critical norm. We start by showing that, for $t\in[0,T_{crit}]\cap[0,(N\lambda)^{-\alpha}(\ln(N))^2]$, \eqref{controll2} holds.

    For this, we define $W:=w_{N,\beta}(x,t)-\bar{w}_{N,\beta}(x,t)$ and note that the evolution equation for $W$ is

    $$\frac{\partial W}{\partial t}+v(W)\cdot\nabla (W+\bar{w}_{N,\beta}(x,t))+v(\bar{w}_{N,\beta}(x,t))\cdot\nabla W+\Lambda^{\alpha} W-F(x,t)=0$$
    so that, after using incompressibility,

    \begin{align}\label{L2evol}
        &\frac{\partial}{\partial t}||W||^2_{L^2}=-\Big(2\int_{\mathds{R}^2}W(v(W)\cdot\nabla\bar{w}_{N,\beta}-F(x,t))dx+||W||^2_{\dot{H}^{\frac{\alpha}{2}}}\Big)\\\nonumber
        &\leq -\Big(2\int_{\mathds{R}^2}W(v(W)\cdot\nabla\bar{g}(r,t))-F(x,t))dx\Big)+C\lambda^{2-\beta}N^{\beta-1}\ln{(N)}||W||^2_{L^2} \\\nonumber
    \end{align}
    where we used that $||\bar{w}_{pert}(r,\theta,t)||_{C^1}\leq C\lambda^{2-\beta}N^{\beta-1}(\ln{(N))^3}$.
  Now,  we consider $\tilde{f}(r)$ a smooth function fulfilling $0\leq \tilde{f}(r)\leq 1$, $\tilde{f}(r)=1$ if $r\geq 2$, $\tilde{f}(r)=0$ if $r\leq 1$ and we define, for $k\in\mathds{Z}$
  $$f_{k}(r)=\tilde{f}(\lambda 2^{k}r)-\tilde{f}(\lambda 2^{k-1}r),$$
  which has support in $r\in[2^{-k}\lambda^{-1},2^{-k}4\lambda^{-1}]$.
  Note that $\sum_{k=-\infty}^{\infty}f_{k}=1$. We also define
  $$f_{\infty}:=\sum^{\infty}_{k=\lceil \log_{2}(\lambda^{2-\beta})\rceil}f_{k}(r)=1-\tilde{f}(2^{\lceil \log_{2}(\lambda^{2-\beta})\rceil}\lambda r),\ f_{-\infty}:=\sum_{k=-\infty}^{\lfloor \log_{2}(-\lambda)\rfloor}f_{k}(r)=\tilde{f}(2^{\lfloor -\log_{2}(\lambda)\rfloor}\lambda r).
  $$

First, since $\bar{g}(r,t)$ is $C^2$, which in particular implies $\partial_{r}\bar{g}(0,t)=0$ (since $-\p_{x_{1}}g(x=0)=\p_{r}g(r=0)=\p_{x_{1}}g(x=0)$), and using that, if $D^{j}$ is a generic derivative of order $j$, $||D^{j}f(Kx)||_{L^{\infty}}=K^{j}||D^{j}f(x)||_{L^{\infty}}$, we get that
\begin{align*}
    &||f_{\infty}(r)\p_{x_{i}}\bar{g}(r,t)||_{L^{\infty}}\leq ||\p_{r}\bar{g}(r,t)||_{L^{\infty}(\text{supp}f_{\infty})}\\
    &\leq||\p_{r}\bar{g}(r,t)||_{L^{\infty}(B_{8\lambda^{-3+\beta}}(0))}\leq C\lambda^{-3+\beta}||\p_{r}\p_{r}\bar{g}(r,t)||_{L^{\infty}}\leq C.
\end{align*}
and thus
$$||\int_{\mathds{R}^2}f_{\infty}(r)W(v(W)\cdot\nabla\bar{g}(r,t))dx||_{L^2}\leq C||W||_{L^2}||v(W)||_{L^2}\leq C||W||^2_{L^2}.$$
Next we want to bound the contribution we obtain when multiplying by $f_{-\infty}$. For this we use lemma \ref{radialdecay} to obtain that
$$|\p_{r}\bar{g}(r,t)(r=r_{0})|=|\lambda^{1-\beta}\p_{r}{g(\lambda r,t)}(r=r_{0})|=|\lambda^{2-\beta}(\p_{r}g)(\lambda r_{0},t)|\leq C\lambda^{2-\beta}\frac{1}{r_{0}\lambda}$$
and thus
\begin{align*}
    &||f_{-\infty}(r)\p_{x_{i}}\bar{g}(r,t)||_{L^{\infty}}\leq ||\p_{r}\bar{g}(r,t)||_{L^{\infty}(\text{supp}f_{-\infty})}\leq||\p_{r}\bar{g}(r,t)||_{L^{\infty}(r\geq \frac12)}\leq C\lambda^{2-\beta}\frac{2}{\lambda}\leq C,
\end{align*}
so
$$||\int_{\mathds{R}^2}f_{-\infty}(r)W(v(W)\cdot\nabla\bar{g}(r,t))dx||_{L^2}\leq C||W||_{L^2}||v(W)||_{L^2}\leq C||W||^2_{L^2}.$$
Next, we want to bound the contribution when multiplying by some generic $f_{k}$. We first note that we can decompose $W(r,\theta,t)$ as
$$W(r,\theta,t)=\sum_{j=0}^{\infty}W_{j}=\sum_{i=0}^{\infty}p_{N}(r)\cos(jN\theta+h_{N}(r))$$
with $p_{N},h_{N}\in C^1$. Note that, by using the orthogonality of the different modes $\cos(jN\theta)$, we have
\begin{align*}
    &\int_{\mathds{R}^2}f_{k}(r)W(v(W)\cdot\nabla\bar{g}(r,t))dx=\sum_{j=0}^{\infty}\int_{\mathds{R}^2}f_{k}(r)W_{j}(v(W_{j})\cdot\nabla\bar{g}(r,t))dx\\
    &=\sum_{j=1}^{\infty}\int_{\mathds{R}^2}f_{k}(r)W_{j}(v(W_{j})\cdot\nabla\bar{g}(r,t))dx.
\end{align*}

Furthermore, we can then use corollary \ref{commutardorlambda} with $\omega(r,\theta)=W_{j}$, $Nj=N$, $g(r)\sin(\theta+i\frac{\pi}{2})=f_{k}(r)\p_{x_{i}}\bar{g}(r,t)$, $\lambda^{-1}=2^{-k}\lambda^{-1} $  plus the parity of the operator $v_{i}$ for $i=1,2$ to get, for $k\geq 0$

  \begin{align*}
      &||\int_{\mathds{R}^2}W_{j}v_{i}(W_{j})f_{k}(r)\frac{\partial}{\partial x_{i}} \bar{g}(r,t) dx||_{L^2}\\
      &=||\frac{1}{2}\int_{\mathds{R}^2}W_{j}[v_{i}(W_{j})f_{k}(r)\frac{\partial}{\partial x_{i}} \bar{g}(r,t)-v_{i}(W_{j} f_{k}(r)\frac{\partial}{\partial x_{i}}\bar{g}(r,t))]dx||_{L^2}\\
      &\leq C_{\epsilon}N^{-1+\epsilon}||f_{k}(\frac{r}{2^{k}\lambda})(\frac{\partial}{\partial x_{i}} \bar{g})(\frac{r}{2^{k}\lambda},t)||_{C^1}||W_{j}||^2_{L^2}\leq CN^{-1+\epsilon}\lambda^{2-\beta}||f_{k}(\frac{r}{2^{k}\lambda})||_{C^1}||(\frac{\partial}{\partial x_{i}} g)(\frac{r}{2^{k}},\lambda^{\alpha}t)||_{C^1}||W_{j}||_{L^2}^2\\
      &\leq CN^{-1+\epsilon}\lambda^{2-\beta}||(\frac{\partial}{\partial x_{i}} g)(\frac{r}{2^{k}},\lambda^{\alpha}t)||_{C^1}||W_{j}||_{L^2}^2\leq CN^{-1+\epsilon}\lambda^{2-\beta}||W_{j}||_{L^2}^2\leq C\lambda^{\alpha}\ln(N)N^{-1+\alpha+\epsilon}||W_{j}||^2.
  \end{align*}

 Finally, for $k<0$, We again use corollary \ref{commutardorlambda} with the same choices, obtaining
 \begin{align*}
      &||\int_{\mathds{R}^2}W_{j}v_{i}(W_{j})f_{k}(r)\frac{\partial}{\partial x_{i}} \bar{g}(r,t) dx||_{L^2}\leq CN^{-1+\epsilon}\lambda^{2-\beta}||(\frac{\partial}{\partial x_{i}} g)(2^{-k}r,t)||_{C^1(r\in[1,4]))}||W_{j}||_{L^2}^2,\\
      &\leq CN^{-1+\epsilon}\lambda^{2-\beta}2^{-k}||(\frac{\partial}{\partial x_{i}} g)(r,t)||_{C^1(r\in[2^{-k},2^{-k}4]))}||W_{j}||_{L^2}^2
  \end{align*}
  and we can use lemma \ref{radialdecay} to obtain
  \begin{align*}
      &||\int_{\mathds{R}^2}W_{j}v_{i}(W_{j})f_{k}(r)\frac{\partial}{\partial x_{i}} \bar{g}(r,t) dx||_{L^2}\leq CN^{-1+\epsilon}\lambda^{2-\beta}||W_{j}||^2.
  \end{align*}
 Adding over all the $j$ we get, for any $k$
 $$||\int_{\mathds{R}^2}Wv_{i}(W)f_{k}(r)\frac{\partial}{\partial x_{i}} \bar{g}(r,t) dx||_{L^2}\leq CN^{-1+\epsilon}\lambda^{2-\beta}||W_{j}||^2.$$
 To finish, we just combine the different inequalities obtained
 \begin{align*}
     &||\int_{\mathds{R}^2}Wv_{i}(W)\frac{\partial}{\partial x_{i}} \bar{g}(r,t) dx||_{L^2}=||\int_{\mathds{R}^2}\sum_{k=-\infty}^{\infty}f_{k}Wv_{i}(W)\frac{\partial}{\partial x_{i}} \bar{g}(r,t) dx||_{L^2}\\
     &\leq ||\int_{\mathds{R}^2}\sum_{k=\lfloor \log_{2}(-\lambda)\rfloor}^{k=\lceil \log_{2}(\lambda^{2-\beta})\rceil}f_{k}Wv_{i}(W)\frac{\partial}{\partial x_{i}} \bar{g}(r,t) dx||_{L^2}+C||W||_{L^2}\\
     &\leq CN^{-1+\epsilon}\ln(\lambda)\lambda^{2-\beta}||W||_{L^2}
 \end{align*}
Combining all this we get that

    \begin{align*}
        &\frac{\partial}{\partial t}||W||^2_{L^2}=-2\Big(\int_{\mathds{R}^2}W(v(W)\cdot\nabla\bar{w}_{N,\beta}-F(x,t))dx+||W||^2_{\dot{H}^{\frac{\alpha}{2}}}\Big)\\
        &\leq C\lambda^{\alpha}N^{-1+\alpha+2\epsilon}||W||^2_{L^2}+C||W||_{L^2}||F(x,t)||_{L^2} .
    \end{align*}
 and using the bounds for $||F(x,t)||_{L^2}$, using Duhamel's principle and choosing $\epsilon$ small we get, for the times considered, for any $\epsilon'>0$
\begin{equation*}
    ||W(t)||_{L^2}\leq e^{tC\lambda^{\alpha}N^{-1+\alpha+2\epsilon}} t||F||_{L^2}\leq C_{\epsilon'} N^{-\beta-1+\epsilon'}\lambda^{-1}.
\end{equation*}

On the other hand, by integrating in time \eqref{L2evol} we get, for any $t_{0}\in[0,(N\lambda)^{-\alpha}\ln(N)]\cap [0,T_{crit}]$, any $\epsilon>0$

\begin{equation}\label{controlha}
  \int_{0}^{t_{0}}||W||^2_{H^{\frac{\alpha}{2}}}\leq \text{sup}_{t\in[0,t_{0}]}(N\lambda)^{-\alpha}\ln(N)(||F||_{L^2}||W||_{L^2}+||W||^2_{L^2})\leq C \frac{N^{\epsilon}}{(N^{\beta+1}\lambda^{\beta})^2 }.  
\end{equation}

To bound the growth  of the higher order norm, we note that

\begin{align}\label{evolucionhs}
    &\frac{\partial}{\partial t}||\Lambda^{s}W||^2_{L^2}=-2\int_{\mathds{R}^2}\Lambda^{s}(W)\Lambda^{s}[v(W)\cdot\nabla (W+\bar{w}_{N,\beta}(x,t))+v(\bar{w}_{N,\beta}(x,t))\cdot\nabla W+\Lambda^{\alpha} W-F(x,t)]dx.\\\nonumber
\end{align}

In order to bound the growth of the $H^{s}$ norm with $s=2-\alpha+\delta$ (we will now just write $s$ instead of $2-\alpha+\delta$ for compactness of notation), we need to bound each of these terms under the assumption that \eqref{controlha} and \eqref{controll2} hold. First, we note that, as seen in \cite{Ju},

$$||\Lambda^{s}(W)\Lambda^{s}[v(W)\cdot\nabla (W)]||_{L^1}\leq C ||\Lambda^{2-\frac{\alpha}{2}}W||_{L^2}||\Lambda^{s+\frac{\alpha}{2}}W||_{L^2}||\Lambda^{s}W||_{L^2}$$
so, using our hypothesis for $||W||_{H^{s}}$, \eqref{controll2}, the interpolation inequality for Sobolev spaces and some basic computations we get, for $N$ big
\begin{align*}
    &||\Lambda^{s}(W)[v(W)\cdot\nabla (W)]||_{L^1}-\frac{1}{100}||\Lambda^{s+\frac{\alpha}{2}}W||^2_{L^2}\\
    &\leq C ||\Lambda^{2-\frac{\alpha}{2}}W||_{L^2}||\Lambda^{s+\frac{\alpha}{2}}W||_{L^2}||\Lambda^{s}W||_{L^2}-\frac{1}{100}||\Lambda^{s+\frac{\alpha}{2}}W||^2_{L^2}\\
    &\leq ||W||_{H^{s+\frac{\alpha}{2}}}||W||_{H^{s+\frac{\alpha}{2}-\delta}}-\frac{1}{100}||W||^2_{H^{s+\frac{\alpha}{2}}}\leq 0\\
\end{align*}
and using our hypothesis for $F$ and taking $\epsilon$ and $\delta$ small enough we get

$$||\Lambda^{s}(W)\Lambda^{s}(F)||_{L^1}\leq ||\Lambda^{s}(W)||_{L^2}\frac{N^{\epsilon} (\lambda N)^{2+\delta}}{N^{\beta+1}\lambda^{\beta}}\leq  C||\Lambda^{s}(W)||_{L^2}(\lambda N)^{\alpha} N^{-\delta}.$$

For the rest of the terms, we need to use a technical lemma from \cite{Dongli}

\begin{lemma}\label{katoponce}
Let $s > 0$. Then for any $s_{1}, s_{2} \geq 0$ with $s_{1} + s_{2} = s$, and any $f$, $g \in \mathcal{S}(\mathds{R}^2)$, the following holds:
\begin{equation}\label{Dsconmutador}
    ||\Lambda^{s}(fg)-\sum_{|\mathbf{k}|< s_{1}}\frac{1}{\mathbf{k}!}\partial^{\mathbf{k}}f \Lambda^{s,\mathbf{k}}g-\sum_{|\mathbf{j}|\leq s_{2}}\frac{1}{\mathbf{j}!}\partial^{\mathbf{j}}g \Lambda^{s,\mathbf{j}}f||_{L^{2}}\leq C ||\Lambda^{s_{1}}f||_{L^{2}} ||\Lambda^{s_{2}}g||_{BMO}
\end{equation}
where $\mathbf{j}$ and $\mathbf{k}$ are multi-indexes, $\partial^{\mathbf{j}}=\frac{\partial}{\partial x^{j_{1}}_{1}\partial x^{j_{2}}_{2}}$, $\partial_{\xi}^{\mathbf{j}}=\frac{\partial}{\partial \xi^{j_{1}}_{1}\partial \xi^{j_{2}}_{2}}$ and $ \Lambda^{s,\mathbf{j}}$ is defined using

$$\widehat{\Lambda^{s,\mathbf{j}}f}(\xi)=\widehat{\Lambda^{s,\mathbf{j}}}(\xi)\hat{f}(\xi)$$
$$\widehat{\Lambda^{s,\mathbf{j}}}(\xi)=i^{-|\mathbf{j}|}\partial^{\mathbf{j}}_{\xi}(|\xi|^s).$$
\end{lemma}

Note that, even though this lemma is only valid for functions in $\mathcal{S}$ (the Schwartz space), an approximation argument allows us to use this lemma for the functions we will be considering.

We will also use that, for any $\epsilon,s_{1},s_{2}\geq 0$, 

$$||\bar{w}_{N,\beta}||_{C^{s_{1}}}\leq||\bar{w}_{pert}||_{C^{s_{1}}}+||\bar{g}(r,t)||_{C^{s_{1}}}\leq  C_{\epsilon}\frac{(N\lambda)^{s_{1}+1-\beta}}{N}N^{\epsilon}+C\lambda^{s_{1}+1-\beta}$$
which is just a direct application of \eqref{sobolevpert} and $||g(\lambda r,t)||_{C^{s_{1}}}\leq \lambda^{s}||g(r,t)||_{C^{s_{1}}}$. Furthermore, this implies that, for $s_{1},s_{2}\geq 0$
$$||\Lambda^{s_{1}}\bar{w}_{N,\beta}||_{C^{s_{2}}}\leq C_{\epsilon}\frac{(N\lambda)^{s_{1}+s_{2}+1-\beta}}{N}N^{\epsilon}+C\lambda^{s_{1}+s_{2}+1-\beta}$$
and if $|\mathbf{k}|=1$, $s_{2}\geq 1$,
$$||\Lambda^{s_{2},\mathbf{k}}\bar{w}_{N,\beta}||_{L^{\infty}}\leq N^{\epsilon} \frac{(\lambda N)^{s_{2}-\beta}}{N}+ C\lambda^{s_{1}-\beta}$$
which follow directly from
$$||\Lambda^{s_{1}}f||_{C^{k}}\leq C_{k,\epsilon}||f||_{C^{k+s_{1}+\epsilon}},||\Lambda^{s_{2},\mathbf{k}}f||_{C^{k}}\leq C_{k,\epsilon}||f||_{C^{k+s_{2}-1+\epsilon}}$$
for $|\mathbf{k}|=1$, $k>0$, $s_{2}\geq 1,s_{1}\geq0$.
 Now, applying lemma \ref{katoponce} with $s_{1}=1+\delta$, $W=f$ we get
\begin{align*}
    &||\Lambda^{s}(W)\Lambda^{s}[v(W)\cdot\nabla \bar{w}_{N,\beta}(x,t)]||_{L^1}\\
    &\leq ||\Lambda^{s}(W)\Lambda^{s}[v(W)]\cdot\nabla \bar{w}_{N,\beta}(x,t)]||_{L^1}+||\Lambda^{s}(W)v(W)\cdot\Lambda^{s}[\nabla \bar{w}_{N,\beta}(x,t)]||_{L^1}\\
    &+||\Lambda^{s}W||_{L^2}(\sum_{|\mathbf{k}|= 1}||\partial^{\mathbf{k}}W||_{L^2}||\Lambda^{s,\mathbf{k}}\nabla\bar{w}_{n,\beta}||_{L^{\infty}}+||\Lambda^{1+\delta}W||_{L^2}||\Lambda^{1-\alpha}\nabla\bar{w}_{n,\beta}||_{L^{\infty}}).
\end{align*}

But, for $t\in[0,(\lambda N)^{-\alpha}(\ln(N))^2]$, using the bounds for $\bar{w}_{N,\beta}$ and the interpolation inequality and taking $\delta$ small and $N$ big

\begin{align*}
    &\int_{0}^{t}||\Lambda^{s}(W)\Lambda^{s}[v(W)]\cdot\nabla \bar{w}_{N,\beta}(x,\tau)]||^2_{L^2}d\tau\leq C\lambda^{2-\beta}\int_{0}^{t}||\Lambda^{\frac{\alpha}{2}}W||^{\frac{\alpha}{s}}_{L^2}||\Lambda^{s+\frac{\alpha}{2}}W||^{\frac{2s-\alpha}{s}}_{L^2}dt\\
    &\leq  C\lambda^{2-\beta}(\int_{0}^{t}||\Lambda^{\frac{\alpha}{2}}W||^2_{L^2}dt)^{\frac{\alpha}{2s}}(\int_{0}^{t}||\Lambda^{s+\frac{\alpha}{2}}W||^2_{L^2}dt)^{\frac{2s-\alpha}{2s}}\leq C_{\epsilon}N^{\epsilon}\lambda^{2-\beta}(N^{\beta+1}\lambda^{\beta})^{\frac{-\alpha}{s}}(\int_{0}^{t}||\Lambda^{s+\frac{\alpha}{2}}W||^2_{L^2}dt)^{\frac{2s-\alpha}{2s}}\\
    &\leq C_{\epsilon} N^{\frac{\alpha}{2-\alpha}(1-\beta)+o(\delta)+o(\epsilon)}(\int_{0}^{t}||\Lambda^{s+\frac{\alpha}{2}}W||^2_{L^2}dt)^{\frac{2s-\alpha}{2s}}\leq \frac {1}{100}(\int_{0}^{t}||\Lambda^{s+\frac{\alpha}{2}}W||^2_{L^2}dt)^{\frac{2s-\alpha}{2s}} .
\end{align*}

Similarly, for $\delta$ small and $N$ big 

\begin{align*}
    &\int_{0}^{t}||\Lambda^{s}(W)v(W)\cdot\Lambda^{s}[\nabla \bar{w}_{N,\beta}(x,t)]||_{L^2}d\tau\leq C_{\epsilon}\lambda^{s+2-\beta}N^{s+1-\beta+\epsilon}\int_{0}^{t}||\Lambda^{s}W||_{L^2}||W||_{L^2}d\tau\\
    &\leq C_{\epsilon}\lambda^{s+2-\beta}N^{s+1-\beta+\epsilon}(\lambda^{\beta}N^{\beta+1})^{-1}N^{\epsilon}(N\lambda)^{-\frac{\alpha}{2}}N^{\epsilon}(N^{\beta+1}\lambda^{\beta})^{-\frac{\alpha}{2s}}(\int_{0}^{t}||\Lambda^{s+\frac{\alpha}{2}}W||^2_{L^2}d\tau)^{\frac{2s-\alpha}{4s}}\\
    &\leq C_{\epsilon} N^{-2(\beta-1)+o(\epsilon)+o(\delta)}(\int_{0}^{t}||\Lambda^{s+\frac{\alpha}{2}}W||^2_{L^2}d\tau)^{\frac{2s-\alpha}{4s}}\leq  \frac{1}{100} (\int_{0}^{t}||\Lambda^{s+\frac{\alpha}{2}}W||^2_{L^2}d\tau)^{\frac{2s-\alpha}{4s}}
\end{align*}
and
\begin{align*}
    &\int_{0}^{t}||\Lambda^{s}W||_{L^2}\sum_{|\mathbf{k}|\leq 1}||\partial^{\mathbf{k}}W||_{L^2}||\Lambda^{s,\mathbf{k}}\nabla\bar{w}_{n,\beta}||_{L^{\infty}}d\tau\leq C_{\epsilon}\lambda^{s+1-\beta}N^{s-\beta+\epsilon}\int_{0}^{t}||\Lambda^{1}W||_{L^2}||\Lambda^{s}W||_{L^2}d\tau\\
    &\leq C_{\epsilon}\lambda^{s+1-\beta}N^{s-\beta+\epsilon}N^{\epsilon}(N^{\beta+1}\lambda^{\beta})^{-[\frac{1}{2}+\frac{\alpha}{2s}]}(\int_{0}^{t}||\Lambda^{s+\frac{\alpha}{2}}W||_{L^2}d\tau)^{\frac{1}{2}+\frac{2s-\alpha}{2s}}\\
    &\leq C_{\epsilon} N^{1-\beta+o(\epsilon)+o(\delta)}\leq \frac{1}{100}(\int_{0}^{t}||\Lambda^{s+\frac{\alpha}{2}}W||_{L^2}d\tau)^{\frac{1}{2}+\frac{2s-\alpha}{2s}},\\
\end{align*}
\begin{align*}
    &\int_{0}^{t}||\Lambda^{1-\alpha}\nabla\bar{w}_{N,\beta}||_{L^{\infty}}||\Lambda^{1+\delta}W||_{L^2}||\Lambda^{s}W||_{L^2}d\tau\leq C_{\epsilon} \lambda^{3-\alpha-\beta}(1+N^{2-\alpha-\beta+\epsilon})\int_{0}^{t}||\Lambda^{1+\delta}W||_{L^2}||\Lambda^{s}W||_{L^2}d\tau\\
    &\leq C_{\epsilon}\lambda^{3-\alpha-\beta}N^{2-\alpha-\beta+\epsilon}N^{\epsilon}(N^{\beta+1}\lambda^{\beta})^{\frac{1-\frac{\alpha}{2}-\delta}{2-\alpha}+\frac{\alpha}{2s}}(\int_{0}^{t}||\Lambda^{s+\frac{\alpha}{2}}W||_{L^2}d\tau)^{\frac{1-\frac{\alpha}{2}+\delta}{2-\alpha}+\frac{2s-\alpha}{2s}}\\
    &\leq C_{\epsilon} N^{1-\beta+o(\epsilon)+o(\delta)} \leq  \frac{1}{100}(\int_{0}^{t}||\Lambda^{s+\frac{\alpha}{2}}W||_{L^2}d\tau)^{\frac{1-\frac{\alpha}{2}+\delta}{2-\alpha}+\frac{2s-\alpha}{2s}}.
\end{align*}

Similarly,

\begin{align*}
    &\int_{0}^{t}||\Lambda^{s}(W)\Lambda^{s}[v(\bar{w}_{N,\beta}(x,t))\cdot\nabla W]||_{L^1}d\tau\\
    &\leq \frac{1}{100}((\int_{0}^{t}||\Lambda^{s+\frac{\alpha}{2}}W||_{L^2}d\tau)^{\frac{1}{2}+\frac{2s-\alpha}{2s}}+2(\int_{0}^{t}||\Lambda^{s+\frac{\alpha}{2}}W||^2_{L^2}d\tau)^{\frac{2s-\alpha}{2s}}).
\end{align*}

Therefore, integrating \eqref{evolucionhs} we get that, for $t\in[0,T_{crit}]\cap[0,(N\lambda)^{-\alpha}(\ln(N))^2]$

$$||\Lambda^{s}W||^2_{L^2}\leq \frac{1}{10},||W||^2_{H^s}\leq \frac{1}{5}$$
and, since by continuity of the $H^s$ norm we must have that $||W(x,T_{crit})||^2_{H^s}=1$,  in particular it must be that $T_{crit}>(N\lambda)^{-\alpha}(\ln(N))^2$ as we wanted to prove.

\end{proof}

Lemma \ref{shortcontrol} allows us to show that our pseudo-solution is a very good approximation of the actual solution for $t\in[0,(\lambda N)^{-\alpha}(\ln(N))^2]$. Furthermore, at $t^{*}=(\lambda N)^{-\alpha}(\ln(N))^2$, we have that

$$||\bar{w}_{pert}(x,t^{*})||_{L^2}\leq  \frac{C}{(N\lambda)^{\beta}} \text{sup}_{r\in \text{supp}[\bar{w}_{pert}]}e^{-G(r,t^{*})}\leq \frac{C}{(N \lambda )^{\beta}}e^{C(\lambda N)^{\alpha} (\lambda N)^{-\alpha}(\ln(N))^2}\leq \frac{C}{N^{\beta+2}\lambda^{\beta}} $$
and for any $\epsilon>0$
\begin{align*}
    &||\bar{w}_{pert}(x,t^{*})||_{H^2}\leq  C_{\epsilon}(N\lambda)^{2-\beta} N^{\epsilon}\text{sup}_{r\in \text{supp}[\bar{w}_{pert}]}e^{-G(r,t^{*})}\leq C(N \lambda )^{2-\beta}N^{\epsilon}e^{C(\lambda N)^{\alpha} (\lambda N)^{-\alpha}(\ln(N))^2}\\
    &\leq C_{\epsilon}N^{-\beta+\epsilon}\lambda^{2-\beta}
\end{align*}
so in particular, by interpolation, for small $\delta>0$
\begin{equation}
    ||w_{pert}(x,t)||_{H^{2-\alpha+\delta}}\leq N^{-\delta}.
\end{equation}

Combining this with \eqref{controlhs} and \eqref{controll2}, we have that, for $t=(\lambda N)^{-\alpha}(\ln(N))^2$

$$||w_{N,\beta}(r,\theta,t)-\bar{g}(r,t)||_{L^2}\leq \frac{C}{N^{\beta+1}\lambda^{\beta}},$$

$$||w_{N,\beta}(r,\theta,t)-\bar{g}(r,t)||_{H^{2-\alpha+\delta}}\leq N^{-\delta}.$$

With this information, we can obtain the following lemma.

\begin{lemma}\label{longcontrol}
There is $N_{0},\epsilon_{0},\delta $ such that if $N\geq N_{0}$, $\epsilon\leq \epsilon_{0}$ then  for any $t\in[(N\lambda)^{-\alpha}(\ln(N))^2,\lambda^{-\alpha}(\ln(N))^3]$ we have that

\begin{equation}\label{longcontroll2}
    ||w_{N,\beta}(x,t)-\bar{g}(r,t)||_{L^2}\leq C_{\epsilon}\frac{N^{\epsilon}}{N^{\beta+1}\lambda^{\beta}},
\end{equation}

\begin{equation}\label{longcontrolhs}
    ||w_{N,\beta}(x,t)-\bar{g}(r,t)||_{H^{2-\alpha+\delta}}\leq 2.
\end{equation}
where $w_{N,\beta}$ is the solution \eqref{SQG} with the same initial conditions as $\bar{w}_{N,\beta}$ and $\bar{g}(r,t)$ is as in \eqref{radial+pert}.
\end{lemma}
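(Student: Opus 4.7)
The pivotal observation is that $\bar{g}(r,t)$ is itself an exact smooth solution of \eqref{SQG}: radial symmetry gives $v(\bar{g})\cdot\nabla\bar{g}=0$, and by construction (together with the scaling $\bar{g}=\lambda^{1-\beta}g(\lambda r,\lambda^{\alpha}t)$) one has $\p_{t}\bar{g}+\Lambda^{\alpha}\bar{g}=0$. Setting $t^{*}:=(N\lambda)^{-\alpha}(\ln(N))^{2}$ and $W:=w_{N,\beta}-\bar{g}$, the difference thus satisfies, for $t\geq t^{*}$,
$$\p_{t}W+v(W)\cdot\nabla W+v(W)\cdot\nabla\bar{g}+v(\bar{g})\cdot\nabla W+\Lambda^{\alpha}W=0,$$
with no forcing term. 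The plan is to re-run the bootstrap scheme of Lemma \ref{shortcontrol} on the longer interval $[t^{*},\lambda^{-\alpha}(\ln(N))^{3}]$, this time using the already-controlled value $W(t^{*})$ as initial data.

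The starting control at $t^{*}$ follows from Lemma \ref{shortcontrol} together with the decay bounds $||\bar{w}_{pert}(t^{*})||_{L^{2}}\leq CN^{-\beta-2}\lambda^{-\beta}$ and $||\bar{w}_{pert}(t^{*})||_{H^{2-\alpha+\delta}}\leq N^{-\delta}$ recorded between the statements of the two lemmas. These come from the exponential damping $e^{-G(r,t^{*})}\leq e^{-c(\ln(N))^{2}}$ on the support of $f$, which beats any polynomial loss. The triangle inequality then gives $||W(t^{*})||_{L^{2}}\leq C_{\epsilon}N^{\epsilon}(N^{\beta+1}\lambda^{\beta})^{-1}$ and $||W(t^{*})||_{H^{2-\alpha+\delta}}\leq 1+N^{-\delta}$, leaving room below the bootstrap ceiling of $2$.

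For the $L^{2}$ propagation, incompressibility kills $\int W\,v(W)\cdot\nabla W\,dx$ and $\int W\,v(\bar{g})\cdot\nabla W\,dx$, so
$$\tfrac{d}{dt}||W||_{L^{2}}^{2}=-2||W||_{\dot{H}^{\alpha/2}}^{2}-2\int W\,v(W)\cdot\nabla\bar{g}\,dx.$$
The remaining integral is bounded exactly as in Lemma \ref{shortcontrol}: decompose $W$ into angular Fourier modes, split $\nabla\bar{g}$ using the scale-adapted dyadic cutoffs $f_{k}(r)$, apply Corollary \ref{commutardorlambda}, and use Lemma \ref{radialdecay} on the outer annuli. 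The outcome is $\tfrac{d}{dt}||W||_{L^{2}}^{2}\leq C\lambda^{\alpha}N^{-1+\alpha+\epsilon}||W||_{L^{2}}^{2}$, and since $t-t^{*}\leq\lambda^{-\alpha}(\ln(N))^{3}$ the Gronwall exponential is $\exp(CN^{-1+\alpha+\epsilon}(\ln(N))^{3})\to 1$ as $N\to\infty$ thanks to $\alpha<1$. This yields \eqref{longcontroll2}.

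For the $H^{2-\alpha+\delta}$ bootstrap one repeats the Kato-Ponce scheme of Lemma \ref{shortcontrol} on $\Lambda^{s}W$ with $s=2-\alpha+\delta$, applying Lemma \ref{katoponce} to the cross terms $\Lambda^{s}[v(W)\cdot\nabla\bar{g}]$ and $\Lambda^{s}[v(\bar{g})\cdot\nabla W]$. The essential simplification is that $\bar{g}$ carries no $N$-dependence at all: the bound $||\bar{g}||_{C^{k}}\leq C\lambda^{k+1-\beta}$ replaces every $(\lambda N)^{k}$ factor that appeared when the corresponding estimates were applied to $\bar{w}_{pert}$. Together with the $L^{2}$ improvement and the dissipation $-2||W||_{\dot{H}^{s+\alpha/2}}^{2}$, every contribution is strictly smaller than in Lemma \ref{shortcontrol}, so the bootstrap closes with $||W(t)||_{H^{2-\alpha+\delta}}<2$ strictly, and continuity rules out $T_{\mathrm{crit}}<\lambda^{-\alpha}(\ln(N))^{3}$. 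The main obstacle is ensuring that the polynomial-in-$\ln(N)$ losses from the enlarged time window do not saturate the dissipation: this is precisely where $\alpha<1$ and the fact that $\bar{g}$ is $N$-independent become decisive.
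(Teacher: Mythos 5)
Your proposal is correct and matches the paper's own (sketched) argument: the paper explicitly defers to the proof of Lemma~\ref{shortcontrol}, restarting the bootstrap at $t^{*}=(N\lambda)^{-\alpha}(\ln N)^{2}$ with the decayed value $W(t^{*})$ as initial data, noting that the evolution equation for $w_{N,\beta}-\bar{g}$ has no forcing term, re-deriving the analogue of \eqref{controlha}, and closing with the Kato--Ponce estimate of Lemma~\ref{katoponce}. You correctly identify the two points that let the same scheme survive the enlarged (by a factor $N^{\alpha}\ln N$) time window — the absence of the source $F$ and the fact that $\|\bar{g}\|_{C^{k}}\lesssim\lambda^{k+1-\beta}$ carries no extra $N$-power, unlike $\bar{w}_{pert}$ — and your Gronwall bookkeeping (exponent $N^{-1+\alpha+\epsilon}(\ln N)^{3}\to 0$) is the right verification that the longer interval is affordable.
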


\begin{proof}
    We will omit the proof since it is completely analogous to that of lemma \ref{shortcontrol}: We note that \eqref{longcontroll2} and \eqref{longcontrolhs} hold for $t=(N\lambda)^{-\alpha}(\ln(N))^2$, we prove \eqref{longcontroll2} for times when \eqref{longcontrolhs} holds, then we obtain an inequality like \eqref{controlha} and with that, the evolution equation for $w_{N,\beta}(x,t)-\bar{g}(r,t)$ and lemma \ref{katoponce} we prove that \eqref{longcontrolhs} holds for $t\in[(N\lambda)^{-\alpha}(\ln(N))^2,\lambda^{-\alpha}(\ln(N))^3]$.
\end{proof}

\begin{corollary}\label{smallnorm}
There is $N_{0},\delta>0$ such that if $N\geq N_{0}$,  then  for any $t\in[\lambda^{-\alpha}(\ln(N))^3,N^{\delta}]$ we have that

$$||w_{N,\beta}(x,t)||_{H^{2-\alpha+\delta}}\leq CN^{-\delta}.$$
and for $t\in[N^{-\frac{\delta}{2}},N^{\delta}]$
\begin{equation}\label{cotac1}
    ||w_{N,\beta}(x,t)||_{H^{2+\delta}}\leq CN^{-\frac{\delta}{2}},
\end{equation}
where $w_{N,\beta}$ is the solution to \eqref{SQG} with the same initial conditions as $\bar{w}_{N,\beta}$.

\end{corollary}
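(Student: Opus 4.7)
The plan is to combine the approximation at time $t_{0}:=\lambda^{-\alpha}(\ln N)^{3}$ coming from Lemma \ref{longcontrol} with the fact that the radial background $\bar g$ is by then super-polynomially small in every Sobolev norm, and then to propagate this smallness by a sub-critical energy estimate. Because $\widehat g$ is supported in $\{\hat r\ge c\}$ and $\bar g(r,t)=\lambda^{1-\beta}g(\lambda r,\lambda^{\alpha}t)$ solves the radial fractional heat equation, one has $\|\bar g(\cdot,t_{0})\|_{H^{\sigma}}\le C_{\sigma}\lambda^{\sigma-\beta}e^{-c(\ln N)^{3}}$ for every $\sigma\ge 0$, which is super-polynomially small in $N$. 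Together with Lemma \ref{longcontrol} this gives $\|w_{N,\beta}(\cdot,t_{0})\|_{L^{2}}\le e^{-c(\ln N)^{3}/2}$ and $\|w_{N,\beta}(\cdot,t_{0})\|_{H^{2-\alpha+\delta}}\le 3$. The $L^{2}$ energy identity $\tfrac{d}{dt}\|w\|_{L^{2}}^{2}=-2\|w\|_{\dot H^{\alpha/2}}^{2}$ (incompressibility plus positivity of $\Lambda^{\alpha}$) then preserves the $L^{2}$ smallness for all $t\ge t_{0}$.

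Setting $s=2-\alpha+\delta$, I apply the same $H^{s}$-energy method developed in the proof of Lemma \ref{shortcontrol} (Lemma \ref{katoponce}, incompressibility, and standard Sobolev interpolations), this time to $w_{N,\beta}$ itself without any error term, to obtain a differential inequality of the schematic form
$$\tfrac{d}{dt}\|w_{N,\beta}\|_{H^{s}}^{2}+\|w_{N,\beta}\|_{\dot H^{s+\alpha/2}}^{2}\le C\|w_{N,\beta}\|_{L^{2}}^{a}\|w_{N,\beta}\|_{H^{s}}^{b}$$
after absorbing all sub-critical factors into the dissipation via Young's inequality. Because $\|w_{N,\beta}\|_{L^{2}}$ is super-polynomially small, the right-hand side is dwarfed by any positive power of $N$ on the window $[t_{0},N^{\delta}]$, so a continuity bootstrap keeps $\|w_{N,\beta}(\cdot,t)\|_{H^{s}}\le 3$ throughout. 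Interpolating $\|w\|_{H^{s}}\le\|w\|_{L^{2}}^{\eta}\|w\|_{\dot H^{s+\alpha/2}}^{1-\eta}$ with $\eta=\alpha/(2s+\alpha)$ and using the same inequality with the dissipation retained produces a Bernoulli-type sink with coefficient $\|w_{N,\beta}\|_{L^{2}}^{-c}\ge e^{c(\ln N)^{3}}$; its integration shows that $\|w_{N,\beta}(\cdot,t)\|_{H^{s}}$ falls below $CN^{-\delta}$ almost instantly after $t_{0}$ and stays there until $N^{\delta}$, which is the first stated bound.

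For \eqref{cotac1} the same scheme is repeated at the higher regularity $s'=2+\delta$: once the $H^{s}$ norm is of size $N^{-\delta}$, the corresponding nonlinear contribution to the $H^{s'}$ energy estimate is controlled by the dissipation up to a negligible source, and a parabolic-smoothing step over an extra time of order $N^{-\delta/2}$ (in the spirit of \cite{ConstantinWu,Do,DoLi}) produces $\|w_{N,\beta}(\cdot,t)\|_{H^{2+\delta}}\le CN^{-\delta/2}$ on $[N^{-\delta/2},N^{\delta}]$. The key difficulty will be the bootstrap of paragraph two over the long window $[t_{0},N^{\delta}]$: this interval is polynomial in $N$, far longer than the existence time supplied by standard local well-posedness in $H^{s}$, and closing it cleanly hinges on showing that the super-polynomial smallness of $\|w_{N,\beta}\|_{L^{2}}$ really does beat all the positive powers of $N$ that the nonlinearity and the Sobolev embeddings at the threshold regularity can generate.
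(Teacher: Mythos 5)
The overall skeleton of your argument matches the paper's — start at $t_{0}=\lambda^{-\alpha}(\ln N)^{3}$ using Lemma~\ref{longcontrol} and the Fourier-support decay of $\bar g$, and propagate by a dissipation-dominated $H^{s}$-energy estimate — but there is a concrete factual error and a missing step that together undermine the bootstrap as you describe it.

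\textbf{The $L^{2}$ size of $w_{N,\beta}(t_{0})$ is only polynomially small, not super-polynomially small.} Lemma~\ref{longcontrol} controls the \emph{error} $w_{N,\beta}-\bar g$ by $C_{\epsilon}N^{\epsilon}/(N^{\beta+1}\lambda^{\beta})$ in $L^{2}$, and this polynomially small quantity dominates the super-polynomially small $\|\bar g(\cdot,t_{0})\|_{L^{2}}$. So $\|w_{N,\beta}(\cdot,t_{0})\|_{L^{2}}\lesssim N^{\epsilon}/(N^{\beta+1}\lambda^{\beta})$, which is a fixed negative power of $N$ (note $\lambda\sim N^{\alpha/(2-\beta-\alpha)}$). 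Your claim $\|w_{N,\beta}(\cdot,t_{0})\|_{L^{2}}\le e^{-c(\ln N)^{3}/2}$ is false, and with it the ``Bernoulli-type sink with coefficient $e^{c(\ln N)^{3}}$'' and the concluding remark that the bootstrap ``hinges on showing that the super-polynomial smallness of $\|w_{N,\beta}\|_{L^{2}}$ really does beat all the positive powers of $N$.''

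\textbf{Missing: the interpolation at $t_{0}$ that makes $\|w_{N,\beta}(t_{0})\|_{H^{s}}$ already $O(N^{-\bar\delta})$.} You start the bootstrap from $\|w_{N,\beta}(t_{0})\|_{H^{2-\alpha+\delta}}\le 3$ and try to \emph{decay} to $N^{-\delta}$. This cannot close via Young's inequality: writing $s=2-\alpha+\delta$ and using the nonlinear bound from \cite{Ju},
\[
\|\Lambda^{s}W\,\Lambda^{s}[v(W)\cdot\nabla W]\|_{L^{1}}\le C\,\|\Lambda^{2-\alpha/2}W\|_{L^{2}}\,\|\Lambda^{s+\alpha/2}W\|_{L^{2}}\,\|\Lambda^{s}W\|_{L^{2}},
\]
interpolating the sub-critical factors between $L^{2}$ and $\dot H^{s+\alpha/2}$ produces $\|\Lambda^{s+\alpha/2}W\|_{L^{2}}$ to a power \emph{strictly greater than} $2$ (one has $\theta_{1}+\theta_{2}=(4-3\alpha/2+\delta)/(2-\alpha/2+\delta)>1$ for all $\alpha\in(0,1)$), so the dissipation cannot swallow it via Young; no super-polynomial prefactor fixes this because a positive power of $\|\Lambda^{s+\alpha/2}W\|_{L^{2}}$ is left over. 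The only way out — and what the paper does — is to interpolate the polynomial $L^{2}$ smallness against the $O(1)$ $H^{2-\alpha+\delta}$ bound at $t_{0}$ to get $\|w_{N,\beta}(t_{0})\|_{H^{2-\alpha+\bar\delta}}\le CN^{-\bar\delta}$ for a slightly smaller $\bar\delta>0$, and then use the nonlinear estimate in its raw form: once $\|\Lambda^{s}W\|_{L^{2}}<1/(2C)$, the coefficient $(C\|\Lambda^{s}W\|_{L^{2}}-1)$ in front of $\|\Lambda^{s+\alpha/2}W\|_{L^{2}}^{2}$ is negative, the nonlinearity is absorbed wholesale, and the remaining Gr\"onwall source $C\|\Lambda^{s}W\|_{L^{2}}^{2}\|W\|_{L^{2}}^{2}$ integrated over the window $[t_{0},N^{\delta}]$ is $\lesssim N^{\delta}\cdot N^{-2(\beta+1)}\lambda^{-2\beta}\ll 1$ — polynomial smallness of $\|W\|_{L^{2}}$ already suffices. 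Your $H^{2+\delta}$ step (parabolic smoothing / Bernoulli sink once the $H^{2-\alpha+\delta}$ norm is $O(N^{-\delta})$) is in line with the paper, but it relies on the first bound being established correctly.
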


\begin{proof}
First, lemma \ref{longcontrol} allows us to show that, for $t=\lambda^{-\alpha}(\ln(N))^3$ and for  small $\delta>0$, any $\epsilon>0$

$$||w_{N,\beta}(x,t)-\bar{g}(r,t)||_{H^{2-\alpha+\delta}}\leq 2$$
$$||w_{N,\beta}(x,t)-\bar{g}(r,t)||_{L^2}\leq \frac{C_{\epsilon}N^{\epsilon}}{\lambda^{\beta}N^{\beta+1}}$$
and, using that $\text{supp } \hat{\bar{g}}(\hat{r},0)\subset \{\hat{r}:\hat{r}\in(\lambda c,\infty)\}$ for some $c>0$ (see lemma \ref{initialvel}), again for the same time

$$||\bar{g}(r,t)||_{H^s}\leq e^{C\lambda^{-\alpha}t}||\bar{g}(r,0)||_{H^{s}}=C\frac{\lambda^{s-\beta}}{N^3}\leq \frac{C\lambda^{s-2+\alpha}}{N^{2}} $$
and combining the three inequalities, taking $\epsilon$ small and using the interpolation inequality, we get that there is a small $\bar{\delta}>0$ such that, for  $t=\lambda^{-\alpha}(\ln(N))^3$
$$||w_{N,\beta}(x,t)||_{H^{2+\alpha-\bar{\delta}}}\leq CN^{-\bar{\delta}}.$$

Finally, we note that

\begin{align*}
    &\frac{\partial}{\partial t}||w_{N,\beta}||^2_{\dot{H}^{2-\alpha+\bar{\delta}}}\leq 2 ||\Lambda^{2-\alpha+\bar{\delta}}(w_{N,\beta})\Lambda^{2-\alpha+\bar{\delta}}[v(w_{N,\beta})\cdot\nabla w_{N,\beta}]||_{L^1}-2||\Lambda^{2-\frac{\alpha}{2}+\bar{\delta}}w_{N,\beta}||^2_{L^2}\\
    &\leq C ||\Lambda^{2-\frac{\alpha}{2}}w_{N,\beta}||_{L^2}||\Lambda^{2-\frac{\alpha}{2}+\bar{\delta}}w_{N,\beta}||_{L^2}||\Lambda^{2-\alpha+\bar{\delta}}w_{N,\beta}||_{L^2}-2||\Lambda^{2-\frac{\alpha}{2}+\bar{\delta}}w_{N,\beta}||^2_{L^2}\\
    &\leq  C (||\Lambda^{2-\frac{\alpha}{2}+\bar{\delta}}w_{N,\beta}||_{L^2}+||w_{N,\beta}||_{L^2})||\Lambda^{2-\frac{\alpha}{2}+\bar{\delta}}w_{N,\beta}||_{L^2}||\Lambda^{2-\alpha+\bar{\delta}}w_{N,\beta}||_{L^2}-2||\Lambda^{2-\frac{\alpha}{2}+\bar{\delta}}w_{N,\beta}||^2_{L^2}\\
    &\leq (C||\Lambda^{2-\alpha+\bar{\delta}}w_{N,\beta}||_{L^2}-1)||\Lambda^{2-\frac{\alpha}{2}+\bar{\delta}}w_{N,\beta}||^2_{L^2}+C||\Lambda^{2-\alpha+\bar{\delta}}w_{N,\beta}||^2_{L^2}||w_{N,\beta}||^2_{L^2}
\end{align*}
and integrating in time gives the desired bound.

Next, using the interpolation inequality and our bounds for $w_{N,\beta}$, we have that, for $t\in[\lambda^{-\alpha}(\ln(N))^3,N^{\delta}]$

\begin{align*}
    &\frac{\partial }{\partial t}||w_{N,\beta}(x,t)||^2_{\dot{H}^{2+\delta}}\leq C||w_{N,\beta}(x,t)||^3_{H^{2+\delta}}-2||w_{N,\beta}(x,t)||^2_{\dot{H}^{2+\delta+\frac{\alpha}{2}}}\\
    &\leq C||w_{N,\beta}(x,t)||^3_{H^{2+\delta}}+\frac{C}{\lambda^{2\beta}N^{4}}-2||w_{N,\beta}(x,t)||^2_{H^{2+\delta+\frac{\alpha}{2}}}\\
    &\leq C||w_{N,\beta}(x,t)||^3_{H^{2+\delta}}+\frac{C}{\lambda^{2\beta}N^{4}}-2\frac{||w_{N,\beta}(x,t)||^3_{H^{2+\delta}}}{||w_{N,\beta}(x,t)||_{H^{2-\alpha+\delta}}}\leq -CN^{\delta}||w_{N,\beta}(x,t)||^3_{H^{2+\delta}}+\frac{C}{\lambda^{2\beta}N^{4}}.
\end{align*}
Now, we note that if for some $t_{0}\in[\lambda^{-\alpha}(\ln(N))^3,N^{\delta}]$ we have $||w_{N,\beta}(x,t)||_{H^{2+\delta}}\leq N^{-1}$, then \eqref{cotac1} holds trivially for $t\in[t_{0},N^{\delta}]$ by integrating the equation. Therefore, it is enough to study the behaviour for $t$ such that $||w_{N,\beta}(x,t)||_{H^{2+\delta}}\geq N^{-1}$, which in particular gives, for $N$ big
$$N^{\delta}||w_{N,\beta}(x,t)||^3_{H^{2+\delta}}>>\frac{1}{\lambda^{2\beta}N^{4}}.$$
and with this, we have

$$\frac{\partial}{\partial t}||w_{N,\beta}(x,t)||_{H^{2+\delta}}\leq -CN^{\delta}||w_{N,\beta}(x,t)||^2_{H^{2+\delta}}$$
and integrating this equation gives the desired result.

\end{proof}

The last computation we need to do regarding $w_{N,\beta}$ is to show that it exhibits a very fast growth in the $H^{\beta}$ norm.

\begin{lemma}\label{growth}
  There is $N_{0}$ such that if $N\geq N_{0}$ then
  $$||w_{N,\beta}(x,t=\lambda^{-2+\beta}(\ln(N))^{\frac12})||_{\dot{H}^{\beta}}\geq C(\ln(N))^{\frac{\beta}{2}}||w_{N,\beta}(x,0)||_{H^{\beta}}$$
  with $w_{N,\beta}$ the solution to \eqref{SQG} with the same initial conditions as $\bar{w}_{N,\beta}$.
\end{lemma}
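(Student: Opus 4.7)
The target time is $t^{*}:=\lambda^{-2+\beta}(\ln N)^{1/2}$. Using the relation $N^{\alpha}\ln N=\lambda^{2-\beta-\alpha}$ from \eqref{nlambda}, one checks that $(N\lambda)^{-\alpha}(\ln N)^{2}=\lambda^{-2+\beta}\ln N$, so $t^{*}\in[0,(N\lambda)^{-\alpha}(\ln N)^{2}]$ and Lemma \ref{shortcontrol} applies. Since $\beta\in(1,2-\alpha)$, for sufficiently small $\delta>0$ we have $H^{2-\alpha+\delta}\hookrightarrow \dot H^{\beta}$, and hence
$$||w_{N,\beta}(x,t^{*})-\bar w_{N,\beta}(x,t^{*})||_{\dot H^{\beta}}\leq C||w_{N,\beta}-\bar w_{N,\beta}||_{H^{2-\alpha+\delta}}\leq C.$$
Therefore the lemma follows once we prove
$$||\bar w_{N,\beta}(x,t^{*})||_{\dot H^{\beta}}\geq c(\ln N)^{\beta/2},\qquad ||\bar w_{N,\beta}(x,0)||_{H^{\beta}}\leq C,$$
which would give $||w_{N,\beta}(x,t^{*})||_{\dot H^{\beta}}\geq c(\ln N)^{\beta/2}-C\geq (c/2)(\ln N)^{\beta/2}$ for $N$ large.

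For the initial bound, the radial part $\bar g(r,0)=\lambda^{1-\beta}g(\lambda r)$ has $||\bar g(\cdot,0)||_{H^{\beta}}\leq C$ by the scaling of $\dot H^{\beta}$, and $\bar w_{pert}(\cdot,0)$ is a wavepacket of amplitude $\sim\lambda^{1-\beta}N^{-\beta}$ supported on an annulus of width $\sim\lambda^{-1}$ whose phase has radial gradient bounded by $CNK\lambda$. A direct computation using Corollary \ref{+alpha} (with $\Lambda^{\beta}$ in place of $\Lambda^{\alpha}$) gives $||\bar w_{pert}(\cdot,0)||_{\dot H^{\beta}}\leq CK^{\beta}$, a fixed constant.

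The core is the lower bound at $t=t^{*}$. The $L^{2}$ amplitude of $\bar w_{pert}$ is $\sim N^{-\beta}\lambda^{-\beta}e^{-G(r,t^{*})}$, and applying $\Lambda^{\beta}$ morally multiplies by the local oscillation frequency
$$N\sqrt{r^{-2}+(\partial_{r}\Theta(r,t^{*}))^{2}},\qquad \partial_{r}\Theta(r,t^{*})\sim K\lambda+t^{*}\lambda^{3-\beta}\sim\lambda(\ln N)^{1/2},$$
(the $K\lambda$ being negligible for large $N$). Since $(N\lambda)^{\alpha}=\lambda^{2-\beta}/\ln N$, the estimate $G(r,t^{*})\leq C(\ln N)^{-1/2}$ holds on the set $\{f(\lambda r)=1\}$, using the computations of Section 3 that led to \eqref{maximocentro}, so $e^{-G(r,t^{*})}\geq 1/2$ there. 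Combining, $||\bar w_{pert}(\cdot,t^{*})||_{\dot H^{\beta}}\gtrsim(N\lambda(\ln N)^{1/2})^{\beta}\cdot N^{-\beta}\lambda^{-\beta}\cdot 1=(\ln N)^{\beta/2}$, while $||\bar g(\cdot,t^{*})||_{\dot H^{\beta}}\leq||\bar g(\cdot,0)||_{\dot H^{\beta}}\leq C$ by the dissipative evolution of $\bar g$, which is dominated by the perturbation contribution.

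The main obstacle is making rigorous the frequency-localization heuristic ``$\Lambda^{\beta}\bar w_{pert}\sim (N\lambda(\ln N)^{1/2})^{\beta}\bar w_{pert}$'' for the non-integer exponent $\beta$. The natural route is to adapt the argument of Lemma \ref{-alpha} and Corollary \ref{+alpha}, with $\Lambda^{-\alpha}$ replaced by $\Lambda^{-\beta}$ (or $\Lambda^{\beta}$ directly), to establish
$$\Lambda^{\beta}\bar w_{pert}(r,\theta,t^{*})=K_{\beta}^{-1}\bar w_{pert}(r,\theta,t^{*})\,\bigl|(N/r)^{2}+(N\partial_{r}\Theta(r,t^{*}))^{2}\bigr|^{\beta/2}+E,$$
with $||E||_{L^{2}}$ of strictly lower order than the principal term. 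The hypotheses of Corollary \ref{+alpha} are satisfied by $\bar w_{pert}$ with amplitude $f(\lambda r)\lambda^{1-\beta}e^{-G(r,t^{*})}$ and phase $N\Theta(r,t^{*})-\int_{0}^{t^{*}}(\partial_{r}\bar g)C_{0}/|1+r^{2}\Theta'^{2}|^{1/2}ds$, all of whose $C^{5}$ norms after the $\lambda^{-1}$-rescaling are bounded by logarithmic powers of $N$, as verified in the proof of the $F_{N,\beta}$ bound. Once this principal-part identity is in hand, the $L^{2}$ norm of the leading term is bounded below by integrating over a fixed neighbourhood of the minimum of $G$, giving the required $(\ln N)^{\beta/2}$.
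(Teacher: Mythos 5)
Your overall skeleton (apply Lemma \ref{shortcontrol} to transfer the bound to $\bar w_{N,\beta}$, bound the initial datum, then show $\bar w_{pert}(\cdot,t^*)$ has $\dot H^\beta$ norm $\gtrsim(\ln N)^{\beta/2}$) agrees with the paper. But you and the paper diverge at the decisive step, and your route has a genuine gap.

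You propose to estimate $\|\bar w_{pert}(\cdot,t^*)\|_{\dot H^\beta}$ (and also $\|\bar w_{pert}(\cdot,0)\|_{\dot H^\beta}$) by ``adapting Lemma \ref{-alpha} / Corollary \ref{+alpha} with $\Lambda^{\pm\alpha}$ replaced by $\Lambda^{\pm\beta}$.'' This adaptation is not available. Both lemmas are stated for exponents in $(0,1]$, and the proof of Lemma \ref{-alpha} genuinely requires this restriction: the identification of the constant $K_\alpha$ rests on the computation
$$H^*=\frac{1}{(\frac{1}{r^2}+g'(r)^2)^{\alpha/2}}\int_0^{\pi/2}\sin(\tilde A)^{-\alpha}\,d\tilde A\int_0^\infty\frac{\cos R}{R^{1-\alpha}}\,dR,$$
and $\int_0^{\pi/2}\sin(\tilde A)^{-\alpha}\,d\tilde A$ diverges for $\alpha\ge1$, while $\int_0^\infty R^{\alpha-1}\cos R\,dR$ is no longer convergent in the ordinary sense for $\alpha>1$ and even changes sign. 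In other words, the pointwise ``$\Lambda^\beta\bar w_{pert}\approx K_\beta^{-1}(\text{local freq})^\beta\bar w_{pert}$'' formula would have to be established by an argument substantially different from the one in the paper, and you give no such argument --- you flag it as the main obstacle and assert it as the ``natural route,'' but the route demonstrably breaks for $\beta>1$.

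The paper sidesteps this entirely by never taking a fractional derivative of the wavepacket: it computes $\|\bar w_{pert}(\cdot,t^*)\|_{\dot H^1}$ directly (one integer radial derivative of the explicit formula gives the factor $N\p_r\Theta\sim N\lambda(\ln N)^{1/2}$), upper-bounds $\|\bar w_{pert}(\cdot,t^*)\|_{L^2}$, and then uses the reverse Gagliardo--Nirenberg interpolation $\|f\|_{\dot H^\beta}\ge\|f\|_{\dot H^1}^\beta/\|f\|_{L^2}^{\beta-1}$ (valid since $\beta>1$). The same integer-order-plus-interpolation device gives the upper bound $\|\bar w_{pert}(\cdot,0)\|_{H^\beta}\le C$ from the $H^1$ and $H^2$ norms. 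This interpolation trick is the key idea your proof is missing.

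Two small inaccuracies (which do not by themselves sink the argument): $(N\lambda)^{-\alpha}(\ln N)^2=\lambda^{-2+\beta}(\ln N)^3$, not $\lambda^{-2+\beta}\ln N$ (you still correctly conclude $t^*$ lies in the admissible interval), and your claim $G(r,t^*)\le C(\ln N)^{-1/2}$ should read $G(r,t^*)\lesssim(\ln N)^{(\alpha-1)/2}$; both still give $e^{-G}\ge 1/2$ for $N$ large.
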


\begin{proof}
    We start by noting that, by using the scaling properties of the norms $\dot{H}^{s}$ plus the definition of $\bar{g}(r,t)$
    $$||\bar{g}(r,t)||_{H^{s}}\leq C_{s}\lambda^{s-\beta}.$$
    On the other hand, we have that, for $i=1,2$, by direct computation
    $$||w_{N,\beta}(x,0)-\bar{g}(r,0)||_{H^i}\leq C\frac{(\lambda N)^{i}}{\lambda^{\beta} N^{\beta}},$$
    and combining these inequalities plus the interpolation inequality gives us
    $$||w_{N,\beta}(x,0)||_{H^{\beta}}\leq C .$$
    On the other hand for $t=\lambda^{-2+\beta}(\ln(N))^{\frac12})$, using lemma \ref{shortcontrol} we have
    $$||w_{N,\beta}(x,t)-\bar{g}(r,t)||_{L^2}\leq \frac{C}{\lambda^{\beta} N^{\beta}}$$
    and
    \begin{align*}
        &||w_{N,\beta}(x,t)-\bar{g}(r,t)||_{\dot{H}^1}\geq ||\bar{w}_{pert}||_{\dot{H}^1}-C_{\epsilon}\frac{N^{\epsilon}}{N^{\beta+1}\lambda^{\beta}}.
    \end{align*}
    Furthermore using the expression for $\frac{\partial }{\partial x_{1}}$ in polar coordinates for $t\in[0,\lambda^{-2+\beta}(\ln(N))^{\frac12}]$
    $$ ||1_{\text{supp}(f(\lambda r))}\partial_{r}\Theta(r,t)||_{L^{\infty}}\leq C (\ln(N))^{\frac12}$$
    so for $r\in \text{supp}(f(\lambda r))$ and $t=\lambda^{-2+\beta}(\ln(N))^{\frac12}$ $G(r,t)\leq C$, which, after some basic computations, gives, for $t=\lambda^{-2+\beta}(\ln(N))^{\frac12}$
    $$|f(\lambda r)\partial_{r}\Theta(r,t)e^{-G(r,t)}|\geq C |f(\lambda r)|(\ln(N))^{\frac12}.$$
    Using all this we get, for $N$ big,

    \begin{align*}
        &||\bar{w}_{pert}||_{\dot{H}^1}\geq ||\frac{\partial}{\partial x_{1}}\bar{w}_{pert}||_{L^2}\geq ||cos(\theta)\frac{\partial}{\partial r}\bar{w}_{pert}||_{L^2}-\frac{C}{(N\lambda)^{\beta-1}}\\
        &\geq ||cos(\theta)f(\lambda r) \lambda N \big(\frac{\partial}{\partial r}\Theta(r,t)\big) \frac{\sin(N(\theta+\Theta(r,t))-C_{0}\int_{0}^{t}\frac{\partial \bar{g}(r,s)}{\partial r}ds) }{N^{\beta}\lambda^{\beta}}e^{-G(r,t)}||_{L^2}-\frac{C}{(N\lambda)^{\beta-1}}\\
        &\geq \frac{C}{(N\lambda)^{\beta-1}}||f(\lambda r)\big(\frac{\partial}{\partial r}\Theta(r,t)\big) e^{-G(r,t)}||_{L^2}-\frac{C}{(N\lambda)^{\beta-1}}\geq \frac{C (\ln(N))^{\frac{1}{2}}}{(N\lambda)^{\beta-1}},
    \end{align*}
   and using the interpolation inequality we get
   $$||\bar{w}_{pert}||_{\dot{H}^{\beta}}\geq \frac{||\bar{w}_{pert}||^{\beta}_{\dot{H}^1}}{||\bar{w}_{pert}||^{\beta-1}_{L^2}}\geq C (\ln(N))^{\frac{\beta}{2}},$$
   so
   $$\frac{||w_{N,\beta}(x,t)||_{\dot{H}^{\beta}}}{||w_{N,\beta}(x,0)||_{H^{\beta}}}\geq C(\ln(N))^{\frac{\beta}{2}}$$
   as we wanted to prove.
\end{proof}
\begin{remark}
    Lemma \ref{growth} would give us the growth around zero of the $H^{\beta}$ norm, namely the final solution obtained in theorem \ref{final} will have a sequence of times $t_{n}$ such that

    $$\text{lim}_{t_{n}\rightarrow 0}\frac{||w(x,t_{n})||_{H^{\beta}}}{|\ln(t_{n})|^{\frac{\beta}{2}}}>0.$$
    It should be noted that it is not the goal of this paper to try and obtain the optimal explosion rate around  $t=0$, and this rate can probably be improved substantially.
\end{remark}

\section{Loss of regularity}

We will now use the previous results to obtain a more compact and usable theorem before we go on to prove the main theorem.

\begin{theorem}\label{wn}
For any $n\in\mathds{N}$, $\alpha\in(0,1)$, $\beta\in(1,2-\alpha)$, there exists initial conditions $w_{n}(x,0)$, a solution $w_{n}(x,t)$ to \eqref{SQG} and $t_{n}\in[0,2^{-n}]$  such that
$$||w_{n}(x,0)||_{H^{\beta}}\leq 2^{-n},||w_{n}(x,t_{n})||_{H^{\beta}}\geq 2^{n}.$$
Furthermore, there is small $\delta>0$ such that, for $t\in [\frac{1}{n},1]$
$$||w_{n}(x,t)||_{H^{2+\delta}}\leq 2^{-n}$$
and for $t\in[0,1]$
$$||w_{n}(x,t)||_{H^{1+\delta}}\leq 2^{-n},||w_{n}(x,t)||_{L^1}\leq 2^{-n}$$
\begin{equation}\label{c1n}
    ||w_{n}(x,t)||_{H^{6}}\leq C_{n}.
\end{equation}

\end{theorem}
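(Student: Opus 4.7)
\textbf{Sketch of proof for Theorem \ref{wn}.} The plan is to obtain $w_{n}$ by applying the scaling symmetry of $\alpha$-SQG to the solution $w_{N,\beta}$ produced in the previous sections for $N$ chosen very large in terms of $n$. Recall that if $w$ solves \eqref{SQG}, then
$$w_{\mu}(x,t):=\mu^{\alpha-1}w(\mu x,\mu^{\alpha}t)$$
is again a solution, and under this rescaling $||w_{\mu}(\cdot,t)||_{\dot{H}^{s}}=\mu^{\alpha+s-2}||w(\cdot,\mu^{\alpha}t)||_{\dot{H}^{s}}$ and $||w_{\mu}(\cdot,t)||_{L^{p}}=\mu^{\alpha-1-2/p}||w(\cdot,\mu^{\alpha}t)||_{L^{p}}$. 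The key point is that because $\beta<2-\alpha$ the exponent $\alpha+\beta-2$ is strictly negative, so taking $\mu$ large decreases the $H^{\beta}$ norm at every fixed time while preserving the inflation ratio produced by Lemma \ref{growth}.

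First I would fix $N$ so large that $(\ln N)^{\beta/2}\geq C\cdot 4^{n}$, where $C$ is the constant from Lemma \ref{growth}. Then I would choose $\mu$ in the non-empty interval
$$\mu \in \bigl[(C 2^{n})^{1/(2-\alpha-\beta)},\,(C^{-1}2^{-n}(\ln N)^{\beta/2})^{1/(2-\alpha-\beta)}\bigr]$$
and set $w_{n}(x,t):=\mu^{\alpha-1}w_{N,\beta}(\mu x,\mu^{\alpha}t)$ together with $t_{n}:=\mu^{-\alpha}\lambda^{-2+\beta}(\ln N)^{1/2}$. The initial bound $||w_{n}(\cdot,0)||_{H^{\beta}}\leq 2^{-n}$ follows from $\mu^{\alpha+\beta-2}||w_{N,\beta}(\cdot,0)||_{H^{\beta}}\leq C\mu^{\alpha+\beta-2}\leq 2^{-n}$ together with the fact that the $L^{2}$ contribution to the $H^{\beta}$ norm rescales by a stronger negative exponent $\mu^{\alpha-2}$. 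The inflation $||w_{n}(\cdot,t_{n})||_{H^{\beta}}\geq 2^{n}$ comes directly from Lemma \ref{growth} applied to $w_{N,\beta}$ and the rescaling identity, and $t_{n}\leq 2^{-n}$ follows since $\lambda\to\infty$ makes $\lambda^{-2+\beta}(\ln N)^{1/2}\to 0$.

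For the auxiliary bounds I would appeal to Corollary \ref{smallnorm}: after rescaling, the time window $[N^{-\delta/2},N^{\delta}]$ on which $||w_{N,\beta}||_{H^{2+\delta}}\leq CN^{-\delta/2}$ becomes $[\mu^{-\alpha}N^{-\delta/2},\mu^{-\alpha}N^{\delta}]$, which contains $[1/n,1]$ as long as $\mu\leq N^{\delta/\alpha}$ and $N$ is large enough in $n$. The target $||w_{n}||_{H^{2+\delta}}\leq \mu^{\alpha+\delta}CN^{-\delta/2}\leq 2^{-n}$ on $[1/n,1]$ then holds if $N$ is chosen super-exponentially large in $n$. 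For $||w_{n}||_{H^{1+\delta}}\leq 2^{-n}$ on $[0,1]$, the pre-rescaled norm of $w_{N,\beta}$ stays small throughout $[0,N^{\delta}]$, either by combining Lemmas \ref{shortcontrol} and \ref{longcontrol} with interpolation against the $H^{2-\alpha+\delta}$ estimate on the pre-dissipation window or by Corollary \ref{smallnorm} on the post-dissipation window, and the rescaling exponent $\alpha-1+\delta<0$ only makes things smaller. The $L^{1}$ bound uses the $L^{1}$-monotonicity of $\alpha$-SQG (which gives $||w_{N,\beta}(\cdot,t)||_{L^{1}}\leq||w_{N,\beta}(\cdot,0)||_{L^{1}}$, and the latter is small because the initial datum is compactly supported on an annulus of size $\lambda^{-1}$) together with the negative exponent $\mu^{\alpha-3}$. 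Finally the $H^{6}$ bound with an $n$-dependent constant follows from persistence of regularity for Schwartz-class initial data once global existence is ensured by Corollary \ref{smallnorm}.

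The main obstacle is the bookkeeping: every target estimate places a constraint on the pair $(\mu,N)$, and one has to verify simultaneous compatibility of the lower bound $\mu\geq(C2^{n})^{1/(2-\alpha-\beta)}$ with the upper bounds $\mu\leq N^{\delta/\alpha}$, $\mu^{\alpha+\delta}\leq 2^{-n}N^{\delta/2}/C$, etc. This is possible because the lower bound on $\mu$ grows polynomially in $2^{n}$ while the upper bounds grow as powers of $N$, so taking $N$ doubly exponential in $n$ opens the window. A secondary technical point is that on the pre-dissipation window the difference $w_{N,\beta}-\bar{g}$ is only controlled in $H^{2-\alpha+\delta}$ by a constant rather than by a small quantity, and the rescaling factor $\mu^{\delta}$ must be kept negligible compared to $2^{-n}$; this forces $\delta$ to be chosen substantially smaller than $2-\alpha-\beta$, but causes no essential difficulty.
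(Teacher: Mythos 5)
Your approach matches the paper's: rescale $w_{N,\beta}$ by the $\alpha$-SQG scaling symmetry $w\mapsto\mu^{\alpha-1}w(\mu x,\mu^{\alpha}t)$ (the paper's $w_{N,K}$ with $K=\mu$) and take both parameters large in terms of $n$, exploiting the negative exponent $\alpha+\beta-2$ to shrink the initial $H^{\beta}$ norm while Lemma \ref{growth} and Corollary \ref{smallnorm} supply the inflation and the post-dissipation decay. The only minor divergences — the order in which you pin down $N$ and $\mu$, and invoking parabolic persistence of regularity in place of the paper's citation of \cite{ConstantinWu} for the $H^{6}$ bound — are cosmetic.
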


\begin{proof}
    We start by fixing $\alpha\in(0,1)$ and $\beta\in(1,2-\alpha)$, and we consider the solutions $w_{N,\beta}(x,t)$ that we considered earlier. These solutions fulfil that 
    \begin{itemize}
        \item $||w_{N,\beta}(x,0)||_{H^{\beta}}\leq C.$
        \item For $t=\lambda^{-2+\beta}\ln(N)$, $N$ big 
        $$||w_{N,\beta}(x,t)||_{H^{\beta}}\geq C (\ln(N))^{\frac{\beta}{2}}.$$
        \item There is some small $\delta>0$ such that, for $N$ big and  $t\in [\lambda^{-\alpha}(\ln(N))^3,N^{\delta}]$
        $$||w_{N,\beta}(x,t)||_{H^{2+\delta}}\leq C N^{-\frac{\delta}{2}}.$$
        \item There is a small $\delta>0$ such that for $t\in[0,N^{\delta}]$,
        $$||w_{N,\beta}||_{H^{1+\delta}}\leq C N^{-\delta},||w_{N,\beta}||_{L^1}\leq C N^{-\delta}.$$
    \end{itemize}
    If we now consider 
    $$w_{N,K}(x,t):=\frac{w_{N,\beta}(K x, K^{\alpha} t)}{K^{1-\alpha}}$$
    we have that these functions are also solutions to \eqref{SQG} and,
    \begin{itemize}
        \item $||w_{N,K}(x,0)||_{H^{\beta}}\leq CK^{-2+\alpha+\beta}.$
        \item For $t=K^{-\alpha}\lambda^{-2+\beta}\ln(N)$, $N$ big 
        $$||w_{N,K}(x,t)||_{H^{\beta}}\geq C K^{2-\alpha-\beta}(\ln(N))^{\frac{\beta}{2}}.$$
        \item There is some small $\delta>0$ such that, for $N$ big and  $t\in [K^{-\alpha}\lambda^{-\alpha}(\ln(N))^3,K^{-\alpha}N^{\delta}]$
        $$||w_{N,K}(x,t)||_{H^{2+\delta}}\leq C K^{\frac{\delta}{2}}N^{-\frac{\delta}{2}}.$$
        \item There is a small $\delta>0$ such that, for $t\in[0,K^{-\alpha}N^{\delta}]$, 
        $$||w_{N,K}||_{H^{1+\delta}}\leq C K^{1+\delta-2+\alpha}N^{-\delta},||w_{N,K}||_{L^1}\leq C K^{-3+\alpha}N^{-\delta}.$$
    \end{itemize}
    so, for any fixed $n$, taking $K$ big and then $N$ big gives us all the inequalities but the bound in $H^{6}$. But then using Theorem 3.1 from \cite{ConstantinWu} tells us that $w_{N,K}$ is in $C^{\infty}$ for any $t>0$, and since our initial conditions are in $C^{\infty}$, using the continuity of the $H^{6}$ norm for \eqref{SQG} finishes the proof.
\end{proof}






We are now ready to prove the main theorem of this paper.

\begin{theorem}\label{final}
    Given $\epsilon>0$, $\alpha\in(0,1)$, $\beta\in(1,2-\alpha)$, there exists initial conditions $w(x,0)$ with $||w(x,0)||_{H^{\beta}}\leq \epsilon$ and a solution $w(x,t)$ to \eqref{SQG} such that $w(x,t)\in C^{\infty}$ for $t\in (0,\infty)$ and there exists a sequence of times $(t_{n})_{n\in\mathds{N}}$ converging to zero, with
    $\text{lim}_{n\rightarrow\infty}||w(x,t_{n})||_{H^{\beta}}=\infty.$
    Furthermore, this is the only solution with the given initial conditions that is in $L_{t}^{\infty}H^{1}_{x}$.
\end{theorem}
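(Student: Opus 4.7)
The plan is a gluing argument built on the building blocks $w_n$ from Theorem~\ref{wn}. Choose a sequence of centers $\{x_n\}_{n\ge 1}\subset\mathds{R}^2$ with pairwise distances $|x_n-x_m|$ growing very rapidly (to be fixed quantitatively by the energy estimate below), set $\tilde w_n(x,t):=w_n(x-x_n,t)$, and define
$$ w_0(x) \;:=\; \sum_{n=1}^{\infty}\tilde w_n(x,0). $$
Translation invariance of $H^\beta$ together with $\|w_n(\cdot,0)\|_{H^\beta}\le 2^{-n}$ gives $\|w_0\|_{H^\beta}\le \sum 2^{-n}\le\epsilon$ (after a harmless overall rescaling). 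The bounds $\|w_n\|_{L^1},\|w_n\|_{H^{1+\delta}}\le 2^{-n}$ from Theorem~\ref{wn} additionally make the series convergent in $L^1\cap H^{1+\delta}$, so $w_0$ is a bona fide element of these spaces.

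\textbf{Near-independent evolution.} Introduce the pseudo-solution $\bar w(x,t):=\sum_n \tilde w_n(x,t)$. Since each $\tilde w_n$ solves \eqref{SQG}, the residual
$$ F \;:=\; \p_t\bar w + v(\bar w)\cdot\nabla\bar w + \Lambda^\alpha\bar w \;=\; \sum_{m\ne n} v(\tilde w_m)\cdot\nabla\tilde w_n $$
consists purely of cross-interaction terms. The Biot--Savart formula combined with $\|w_m\|_{L^1}\le 2^{-m}$ yields $|v(\tilde w_m)(x)|\lesssim 2^{-m}|x-x_m|^{-2}$ for $x$ far from the support of $\tilde w_m$, with analogous decay for every derivative. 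Choosing $|x_n-x_m|$ large enough then makes each $\|v(\tilde w_m)\cdot\nabla\tilde w_n\|_{H^{2+\delta}}$ summably small. An energy/commutator estimate in the spirit of Lemmas~\ref{shortcontrol}--\ref{longcontrol}, applied to the equation for $w-\bar w$ with source $F$, then yields $\|w(\cdot,t)-\bar w(\cdot,t)\|_{H^\beta}\le\tfrac12$ uniformly on $[0,1]$; here $w$ is the solution of \eqref{SQG} with initial data $w_0$, produced by truncating the sum to its first $N$ blobs (smooth initial data by \eqref{c1n}), solving, and passing to the limit using the same stability estimate.

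\textbf{Blow-up, smoothness, uniqueness.} The spatial separation makes the components $\tilde w_m(\cdot,t_n)$ essentially $H^\beta$-orthogonal, so
$$ \|w(\cdot,t_n)\|_{H^\beta} \;\ge\; \|\tilde w_n(\cdot,t_n)\|_{H^\beta}-\sum_{m\ne n}\|\tilde w_m(\cdot,t_n)\|_{H^\beta}-\|w-\bar w\|_{H^\beta} \;\ge\; 2^{n-1}, $$
which gives $\|w(t_n)\|_{H^\beta}\to\infty$ along $t_n\in[0,2^{-n}]$. For any fixed $t>0$, Theorem~\ref{wn} supplies $\|\tilde w_n(\cdot,t)\|_{H^{2+\delta}}\le 2^{-n}$ as soon as $n>1/t$, so the tail of $\bar w(\cdot,t)$ lies in $H^{2+\delta}$, while the finitely many remaining blobs are in $H^6$ by \eqref{c1n} and in fact $C^\infty$ for $t>0$ by parabolic smoothing (see \cite{ConstantinWu}). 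Hence $w(\cdot,t)\in H^{2-\alpha+\delta}$, which is in the local well-posedness class, and standard propagation of regularity upgrades $w$ to $C^\infty_x$ for every $t>0$. For uniqueness in $L^\infty_t H^1_x$, a competing solution $w'$ with the same data is compared with $\bar w$ through the same $L^2$-energy argument: the difference $w'-\bar w$ solves the perturbed equation with vanishing initial data and the small source $F$, and the energy inequality closes under an $H^1$ a priori control alone, since $\bar w$ itself is in $H^{2-\alpha+\delta}$ away from the measure-zero set of spike times.

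\textbf{Main obstacle.} The crux is the quantitative control of the cross-interaction $F$ on all of $[0,1]$ in a norm strong enough to close the $H^\beta$ energy estimate in the presence of super-critical dissipation. The $|x|^{-2}$ decay of the Riesz transform is the weakest available and forces a super-exponential choice of $|x_n-x_m|$ that has to be simultaneously compatible with the $H^{2+\delta}$-smallness of each blob for $t\ge 1/n$ and with the large $H^\beta$ spike at $t=t_n$; verifying this compatibility \emph{uniformly} in time, while also ensuring that the inductive construction of $w$ from truncated sums passes to a genuine global solution rather than an artefact of the approximation, is the most delicate part of the argument.
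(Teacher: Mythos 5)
Your overall scheme (translate and superpose the building blocks from Theorem~\ref{wn}, control the cross--interactions by spatial separation, compare with a pseudo-solution, pass to a limit) is the right family of ideas, but as written it does not close, for two related reasons. First, your pseudo-solution is the \emph{infinite} sum $\bar w(\cdot,t)=\sum_n \tilde w_n(\cdot,t)$ of exact building-block solutions, and you want a stability estimate of Lemma \ref{shortcontrol} type giving $\|w-\bar w\|_{H^\beta}\le\tfrac12$ uniformly on $[0,1]$. That kind of estimate is Gronwall in nature and carries a factor $\exp\!\big(\int_0^t\|\bar w\|_{C^1}\,ds\big)$ (or the $H^{2+\delta}$ norm). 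But $\bar w$ does not have a uniform $C^1$ (or $H^{2+\delta}$) bound as $t\to 0^+$: by design, the $H^6$ bounds $C_{c_j}$ are unbounded and the $H^{2+\delta}$ bound on the sum degenerates like $S_{j_0(t)}$ as $t\to 0$. So the one-shot comparison you propose does not control the error term, and the passage from finite truncations to the full series needs more than ``the same stability estimate.'' The paper avoids this by an \emph{inductive} gluing: at stage $J$ the pseudo-solution is $\tilde w_{J-1}+T_{R_J}(w_{c_J})$ where $\tilde w_{J-1}$ is the already-constructed true solution (not the sum of blocks), and $R_J$ is then chosen \emph{after} $\tilde w_{J-1}$ is known, using only the soft fact that translates of $H^7$ functions become $H^5$-orthogonal and that smooth solutions are time-continuous in $H^7$; this keeps a two-body comparison at each step, with constants that are allowed to depend on $J$ and are absorbed by taking $R_J$ large enough.

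Second, your uniqueness argument is genuinely incomplete. You assert that the $L^2$ energy inequality closes ``under an $H^1$ a priori control alone, since $\bar w$ is in $H^{2-\alpha+\delta}$ away from the measure-zero set of spike times.'' But the Gronwall factor $\exp\!\big(\int_{t_0}^{t}\|w_\infty\|_{C^1}\big)$ is not controlled merely by knowing $w_\infty$ is smooth for a.e.\ $t$; the relevant bound $\|w_\infty(\cdot,t)\|_{C^1}\lesssim S_{j_0(t)}$ blows up as $t\to 0^+$, and the measure-zero remark does not help. The paper's uniqueness relies on a quantitative interplay between two choices you did not make: $\|W(1/c_j)\|_{L^2}\lesssim c_j^{-1}\,\|W\|_{L^\infty_tH^1_x}$ (from short time and $L^\infty$ control), and the choice $c_j\ge j\,e^{S_{j-1}+1}$ which forces $c_j^{-1}e^{S_{j-1}}\to 0$; only their combination sends $\|W(t)\|_{L^2}$ to $0$ along $t=1/c_j\to 0$ and gives uniqueness. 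Without tying the growth of the $H^6$ constants $C_{c_j}$ to the separation $c_j$ (in the paper, via $S_j$), the argument does not close. In short, your ``Main obstacle'' paragraph correctly identifies the gap, but the body of your proof does not supply the inductive construction and the quantitative $(c_j,R_j)$ selection that actually overcome it.
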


\begin{proof}
    For this proof, we will be considering initial conditions of the form
    \begin{equation}\label{initialtotal}
        w(x,0)=\sum_{j=1}^{\infty}T_{R_{j}}(w_{c_{j}}(x,0))
    \end{equation}
    with $T_{R}(f(x_{1},x_{2}))=f(x_{1}+R,x_{2})$, $w_{c_{J}}(x,0)$ the initial conditions given by theorem \ref{wn}.
    In order to show properties of the solution given by \eqref{initialtotal}, we will also consider a truncated initial conditions 
    \begin{equation}\label{initialtrunc}
        \tilde{w}_{J}(x,0)=\sum_{j=1}^{J}T_{R_{j}}(w_{c_{j}}(x,0))
    \end{equation}
    and we will refer the solution with initial conditions given by \eqref{initialtrunc} as $\tilde{w}_{J}(x,t)$.
    Fixed $\epsilon,$ which we will assume $\frac{1}{2}>\epsilon$ without loss of generality, we will choose  $(c_{j})_{j\in\mathds{N}}$ so that they fulfil:
    \begin{itemize}
        \item $c_{i}>c_{j}$ if $i>j$. 
        \item $2^{-c_{1}+1}\leq \epsilon$, so $||w(x,0)||_{H^{\beta}}\leq \epsilon$.
        \item If we define
        \begin{equation}\label{sj}
            S_{j}:=\sum_{i=1}^{j}C_{c_{i}}
        \end{equation}
        with $C_{c_{i}}$ the constants given by \eqref{c1n}, then we take $c_{j}$ so that
        $$c_{j}\geq j e^{S_{j-1}+1},2^{c_{j}}\geq 2 S_{j-1}.$$
        \item If $t_{c_{j}}$ is the time given by theorem \ref{wn} such that
        $$||w_{c_{j}}(x,t_{c_{j}})||_{H^{\beta}}\geq 2^{c_{j}}$$
        then $\frac{1}{c_{j+1}}\leq t_{c_{j}}$.
    \end{itemize}

    We will now divide the proof in four different steps.
    
    Step 1)The goal of this step is to show the following claim:
    
    For any choice of $(c_{j})_{j\in\mathds{N}}$ and $\epsilon'>0$, we can choose $(R_{j})_{j\in\mathds{N}}$ such that, for $t\in[0,1]$,  for any $J\in\mathds{N}$
    \begin{equation}\label{convh5}
        ||\tilde{w}_{J}(x,t)-\tilde{w}_{J-1}(x,t)-T_{R_{J}}(w_{c_{J}}(x,t))||_{H^{5}}\leq \epsilon' 2^{-J-1},
    \end{equation}
    and $\tilde{w}_{J}(x,t)\in H^{6}$ for $t\in[0,1]$, and such that for $t\in[0,1]$
    \begin{equation}\label{convc3}
        ||\tilde{w}_{J}(x,t)-\tilde{w}_{J-1}(x,t)||_{C_{x}^{3}(B_{J}(0))}\leq \epsilon'2^{-J}
    \end{equation}
    where $B_{J}(0)$ is the ball of radius $J$ centered at the origin.

    We will show \eqref{convh5} and \eqref{convc3} by induction, by showing that, for any $J\in\mathds{N}$, fixed $(c_{j})_{j=1,...,J}$ and $\epsilon'>0$, we can choose $(R_{i})_{i=1,...,J}$ so that for any $j=1,...,J$
    \begin{equation}\label{convh5trunc}
        ||\tilde{w}_{j}(x,t)-\tilde{w}_{j-1}(x,t)-T_{R_{j}}(w_{c_{j}}(x,t))||_{H^{5}}\leq \epsilon' 2^{-j-1},
    \end{equation}
     \begin{equation}\label{convc3trunc}
        ||\tilde{w}_{j}(x,t)-\tilde{w}_{j-1}(x,t)||_{C_{x}^{3}(B_{J}(0))}\leq \epsilon'2^{-j}.
    \end{equation}
    and $\tilde{w}_{j}(x,t)\in H^{6}$.
     For $J=1$ this is trivial since $\tilde{w}_{J=0}=0$, $\tilde{w}_{J=1}(x,t)=T_{R_{1}}(w_{c_{1}}(x,t))$ and $w_{J=1}\in C^{\infty}$ for all time. Now, for some arbitrary $J$ we have that, if we define
    $$\bar{w}_{J}(x,t)=\tilde{w}_{J-1}(x,t)+T_{R_{J}}(w_{c_{J}}(x,t))$$ 
    we have that $\bar{w}_{J}(x,t)$ is a pseudo-solution for \eqref{SQG} with
    $$F(x,t)=-v(\tilde{w}_{J-1}(x,t))\cdot \nabla (T_{R_{J}}(w_{c_{J}}(x,t)))-v(T_{R_{J}}(w_{c_{J}}(x,t)))\cdot \nabla (\tilde{w}_{J-1}(x,t)).$$
    Furthermore, both $\tilde{w}_{J-1}(x,t)$ and $T_{R_{J}}(w_{c_{J}}(x,t))$ are $C^{\infty}$ functions for $t\in[0,1]$ since they are both solutions to \eqref{SQG} that are uniformly bounded in $H^{6}$, $\tilde{w}_{J-1}(x,t)$ by hypothesis and $T_{R_{J}}(w_{c_{J}}(x,t))$ by theorem \ref{wn}. But then we know that
    $\text{lim}_{R_{j}\rightarrow\infty}||F(x,t)||_{H^5}=0$ by using that for any two functions $f_{1}(x),f_{2}(x)\in H^{7}$
    $$\text{lim}_{R\rightarrow\infty}||f_{1}(x_{1},x_{2})f_{2}(x_{1}+R,x_{2})||_{H^5}=0,$$
    plus the fact that $C^{\infty}$ solutions solutions to \eqref{SQG} are continuous in time with respect to the $H^{7}$ norm.

    Now, to get \eqref{convh5trunc}, we just use that, if $\bar{w}_{error,c}(x,t)$ is a family of pseudo-solutions (which depends on the parameter $c$) with source term $F_{error,c}$ and fulfilling $||\bar{w}_{error,c}(x,t)||_{H^{6}}\leq C$ for $t\in[0,T]$ ($C$ independent of $c$), $||F_{error,c}(x,t)||_{H^{5}}\leq c$ and we call $w_{c}(x,t)$ the solution of \eqref{SQG} with the same initial conditions as $\bar{w}_{error,c}$, then 
    $$\lim_{c\rightarrow 0}||w_{c}(x,t)-\bar{w}_{error,c}(x,t)||_{H^{5}}=0,$$
and therefore,
\begin{equation}\label{rjh5}
    \text{lim}_{R_{J}\rightarrow\infty}|| \tilde{w}_{J}(x,t)-\tilde{w}_{J-1}(x,t)-T_{R_{J}}(w_{c_{J}}(x,t))||_{H^{5}}=0,
\end{equation}
and so taking $R_{J}$ big enough gives \eqref{convh5trunc}, and then since for $t\in[0,1]$ $w_{J}(x,t)$ is a $H^{5}$ solution to \eqref{SQG} with initial conditions in $C^{\infty}$, it must also be in $H^{6}$.

Next, for \eqref{convc3trunc}, since we only need to prove the case $j=J$, we  use
\begin{align*}
    &||\tilde{w}_{J}(x,t)-\tilde{w}_{J-1}(x,t)||_{C_{x}^{3}(B_{J}(0))}\\
    &\leq ||\tilde{w}_{J}(x,t)-\tilde{w}_{J-1}(x,t)-T_{R_{J}}(w_{c_{J}}(x,t)||_{H^{5}}+||T_{R_{J}}(w_{c_{J}}(x,t)||_{C_{x}^{3}(B_{J}(0))}\\
    &\leq \epsilon' 2^{-J-1}+||T_{R_{J}}(w_{c_{J}}(x,t)||_{C_{x}^{3}(B_{J}(0))}
\end{align*}
and, as before, using the continuity in time with respect to the $H^{5}$ norm of smooth solutions to \eqref{SQG} gives us
$$\text{sup}_{t\in[0,1]}\text{lim}_{R_{J}\rightarrow\infty}||T_{R_{J}}(w_{c_{J}}(x,t)||_{C_{x}^{3}(B_{J}(0))}=0,$$
so taking $R_{j}$ big enough finishes step 1.
    
    Step 2)The goal of this step is to obtain the properties of $\text{lim}_{J\rightarrow\infty}\tilde{w}_{J}(x,t)$:
    
    First we note that,
    $$||w_{c_{j}}(x,t)||_{H^{1+\delta}}\leq 2^{-c_{j}},||w_{c_{j}}(x,t)||_{L^1}\leq 2^{-c_{j}}$$
    so there exists
    $$w_{\infty}(x,t):=\lim_{J\rightarrow\infty}\tilde{w}_{J}(x,t)$$
    and  $\tilde{w}_{J}(x,t)$ tends to $w_{\infty}(x,t)$ in $ H^{1+\delta}\cap L^1$. 
    We would like to show that, for any $t\in[0,1]$
    $$\frac{\partial}{\partial t}w_{\infty}(x,t)+v(w_{\infty}(x,t))\cdot \nabla w_{\infty}(x,t)+\Lambda^{\alpha}(w_{\infty}(x,t))=0.$$

    For this, we note that, for $j>J$, using the properties of $\Lambda^{\alpha}$ and $v$, for $t\in[0,1]$
    \begin{align*}
        &||\Lambda^{\alpha}w_{\infty}(x,t)-\Lambda^{\alpha}\tilde{w}_{j}(x,t)||_{C_{x}^{1}(B_{J}(0))}\leq C( ||w_{\infty}(x,t)-\tilde{w}_{j}(x,t)||_{C_{x}^{2}(B_{J}(0))}+||w_{\infty}(x,t)-\tilde{w}_{j}(x,t)||_{L^1})\\
        &\leq C(\sum_{i=j}^{\infty}||\tilde{w}_{i+1}(x,t)-\tilde{w}_{i}(x,t)||_{C_{x}^{2}(B_{J}(0))}+2^{-j+1})\leq C 2^{-j},
    \end{align*}
    and similarly
    \begin{align*}
        &||v(w_{\infty})\cdot\nabla w_{\infty}-v(w_{j})\cdot\nabla w_{j}||_{C_{x}^{1}(B_{J}(0))}\\
        &\leq C\text{sup}_{i\geq j}||\tilde{w}_{i}||_{C_{x}^{2}(B_{J}(0))}(||w_{\infty}(x,t)-\tilde{w}_{j}(x,t)||_{C_{x}^{2}(B_{J}(0))}+||w_{\infty}(x,t)-w_{j}(x,t)||_{L^1})\\
        &+C\text{sup}_{i\geq j}(||\tilde{w}_{i}||_{C_{x}^{2}(B_{J}(0))}+||\tilde{w}_{i}||_{L^1})(||w_{\infty}(x,t)-\tilde{w}_{j}(x,t)||_{C_{x}^{2}(B_{J}(0))}\\
        &\leq C 2^{-j},
    \end{align*}
    so that

    \begin{align*}
        &w_{\infty}(x,t_{2})-w_{\infty}(x,t_{1})=\text{lim}_{J\rightarrow \infty}(\tilde{w}_{J}(x,t_{2})-\tilde{w}_{J}(x,t_{1}))\\
        &=-\text{lim}_{J\rightarrow \infty}\int_{t_{1}}^{t_{2}}(v(\tilde{w}_{J}(x,s))\cdot \nabla \tilde{w}_{J}(x,s)+\Lambda^{\alpha}(\tilde{w}_{J}(x,s)))ds\\
        &=-\int_{t_{1}}^{t_{2}}(v(w_{\infty}(x,s))\cdot \nabla w_{\infty}(x,s)+\Lambda^{\alpha}(w_{\infty}(x,s)))ds.
    \end{align*}    
    and using that
\begin{align*}
    &\text{lim}_{J\rightarrow\infty}||(v(w_{\infty}(x,t))\cdot \nabla w_{\infty}(x,t)+\Lambda^{\alpha}(w_{\infty}(x,t))-(v(\tilde{w}_{J}(x,t))\cdot \nabla \tilde{w}_{J}(x,t)\\
    &+\Lambda^{\alpha}(\tilde{w}_{J}(x,t))||_{C^{0}_{t}C^{1}_{x}([0,1]\times  B_{J}(0))}=0
\end{align*}
    and that $(v(\tilde{w}_{J}(x,t))\cdot \nabla \tilde{w}_{J}(x,t)+\Lambda^{\alpha}(\tilde{w}_{J}(x,t))$ is continuous in time with respect to the $C_{x}^{1}(B_{J}(0))$ norm, we get that the function
    $$v(w_{\infty}(x,t))\cdot \nabla w_{\infty}(x,t)+\Lambda^{\alpha}(w_{\infty}(x,t))$$
    is also continuous in time (for $t\in[0,1]$) with respect to the $C^{1}_{x}(B_{J}(0))$ norm, so
    \begin{align*}
        &\frac{\partial }{\partial t}w_{\infty}(x,t)=-(v(w_{\infty}(x,t))\cdot \nabla w_{\infty}(x,t)+\Lambda^{\alpha}(w_{\infty}(x,t))).
    \end{align*} 
    holds.

    Furthermore, for $t>0$
    \begin{align*}
        &||w_{\infty}(x,t)-\tilde{w}_{J}(x,t)||_{H^{2+\delta}}\leq \sum_{j=J}^{\infty}||w_{\infty}(x,t)-\tilde{w}_{J}(x,t)-T_{R_{j}}(w_{c_{j}}(x,t))||_{H^{5}}+\sum_{j=J}^{\infty}||w_{c_{j}}(x,t)||_{H^{2+\delta}}\\
        &\leq \epsilon'2^{-J-1}+\sum_{j=J}^{\infty}||w_{c_{j}}(x,t)||_{H^{2+\delta}}.
    \end{align*}
    
    Now, we note that, if $t\geq \frac{1}{c_{j}}$, then
    $$||w_{c_{j}}(x,t)||_{H^{2+\delta}}\leq 2^{-j}$$
    so, if  
    \begin{equation}\label{j0}
        j_{0}(t):=\text{max}(\{j\in\mathds{N}: \frac{1}{c_{j}}< t\},\{0\})
    \end{equation}
    then
    $$\sum_{j=1}^{\infty}||w_{c_{j}}(x,t)||_{H^{2+\delta}}\leq \sum_{j=j_{0}(t)+1}^{\infty}2^{-c_{j}}+S_{j_{0}(t)}\leq S_{j_{0}(t)}+1$$
    with $S_{j}$ as in \eqref{sj} and if $J>j_{0}(t)$
     $$\sum_{j=J}^{\infty}||w_{c_{j}}(x,t)||_{H^{2+\delta}}\leq \sum_{j=J}^{\infty}2^{-c_{j}}\leq 2^{-J+1}.$$
    This in particular means that, for any $t>0$,
    \begin{equation}\label{convh2del}
        \text{lim}_{J\rightarrow\infty}||w_{\infty}(x,t)-\tilde{w}_{J}(x,t)||_{H^{2+\delta}}=0
    \end{equation}
    and
    \begin{equation}\label{boundh2del}
        ||w_{\infty}(x,t)||_{H^{2+\delta}}\leq \epsilon'+ S_{j_{0}(t)}+1.
    \end{equation}
so in particular we know that, for any $t_{0}>0$ 
\begin{equation}\label{sj0}
    \text{sup}_{t\in[t_{0},1]}||w_{\infty}(x,t)||_{H^{2+\delta}}\leq \epsilon'+ S_{j_{0}(t_{0})}+1
\end{equation}
and since $w_{\infty}(x,t)$ is a solution to \eqref{SQG}, using  Theorem 3.1 from \cite{ConstantinWu} tells us that $w_{\infty}$ is in $C^{\infty}$ for any $t>0$.

Finally, we have that, for $t=1$

$$||w_{\infty}(x,t)||_{H^{2+\delta}}\leq \sum_{j=1}^{\infty}||w_{c_{j}}(x,t)||_{H^{2+\delta}}+||\tilde{w}_{J}(x,t)-\tilde{w}_{J-1}(x,t)-T_{R_{J}}(w_{c_{J}}(x,t)||_{H^{2+\delta}}\leq 2^{-c_{1}+1}+\epsilon'$$
so, we can make the $H^{2+\delta}$ as small as we want by taking $c_{1}$ big and $\epsilon'$ small, and in particular $w_{\infty}(x,t)$ will be a global smooth solution to \eqref{SQG}.

    Step 3: We will now show that we have uniqueness, i.e. that, for any $M,\epsilon>0$, if we assume there exists a solution $\tilde{w}(x,t)$ to \eqref{SQG} fulfilling
    $$\text{sup}_{t\in[0,\epsilon]}||\tilde{w}(x,t)||_{H^{1}}\leq M$$
    then, for $t\in[0,\epsilon]$ $\tilde{w}(x,t)=w_{\infty}(x,t)$.

    For this, we note that, if we define $W(x,t):=\tilde{w}(x,t)-w_{\infty}(x,t)$ then

    $$\frac{\partial}{\partial t}W(x,t)+v(W)\cdot\nabla(W+w_{\infty})+v(w_{\infty})\cdot\nabla W+\Lambda^{\alpha} W=0$$
    so in particular
    $$\frac{\partial}{\partial t}||W||^2_{L^2}\leq -2\int_{\mathds{R}^2}W(v(W)\cdot\nabla w_{\infty}(x,t))dx$$
but then we have
\begin{equation}\label{c1bounded}
    |\int_{\mathds{R}^2}W(v(W)\cdot\nabla w_{\infty}(x,t))dx|\leq ||w_{\infty}(x,t)||_{C^1}||W||^2_{L^2}
\end{equation}
$$|\int_{\mathds{R}^2}W(v(W)\cdot\nabla w_{\infty}(x,t))dx|\leq ||w_{\infty}(x,t)||_{L^{\infty}}||W||_{L^2}||W||_{H^1},$$
so, for any $t_{0}\in[0,t]$ we have

    $$||W(x,t)||_{L^{2}}\leq Ct_{0}||W||_{H^1}e^{\int_{t_{0}}^{t}||w_{\infty}(x,s)||_{C^1}ds}$$
but, for $t\leq 1$

    \begin{align*}
      &||w_{\infty}(x,t)||_{C^1}\leq ||w_{\infty}(x,t)||_{H^{2+\delta}}\leq \epsilon'+ S_{j_{0}(t_{0})}+1
    \end{align*}
where we used \eqref{sj0}. Now, we note that for $t=\frac{1}{c_{j}}$ we already know that

  $$||W(x,t)||_{L^{2}}\leq \frac{C}{c_{j}}||W||_{H^1}e^{\epsilon'+ S_{j_{0}(\frac{C}{c_{j}})}+1}\leq\frac{1}{c_{j}}(M+1)e^{\epsilon'+ S_{j_{0}(\frac{1}{c_{j}})}+1}$$  
and, by definition of $j_{0}$ (see \eqref{j0}) we have that $j_{0}(\frac{1}{c_{j}})\leq j-1$, so
    $$||W(x,t)||_{L^{2}}\leq \frac{1}{c_{j}}(M+1)e^{\epsilon'+ S_{j-1}+1}\leq \frac{C(M+1)}{j}$$ 
    which tends to zero as $j$ tends to infinity so, for $t\in[0,1]$ $||W(x,t)||_{L^2}=0$ and therefore $\tilde{w}(x,t)=w_{\infty}(x,t)$. For $t>1$, we just use that $\text{sup}_{t\geq 1}||w_{\infty}(x,t)||_{C^1}\leq C$, and therefore \eqref{c1bounded} gives uniqueness. 
    Step 4: To end the proof, we need to show loss of regularity, and more precisely that there is a sequence of times $t_{n}$ such that
    $$\text{lim}_{n\rightarrow \infty}||w_{\infty}(x,t_{n})||_{H^{\beta}}=\infty.$$
    
    But we chose our $c_{j}$ so that

    $$2^{c_{j}}\geq 2 S_{j-1},$$
    and if $t_{c_{j}}$ is the time given by theorem \ref{wn} such that
        $$||w_{c_{j}}(x,t_{c_{j}})||_{H^{\beta}}\geq 2^{c_{j}}$$
        then $\frac{1}{c_{j+1}}\leq t_{c_{j}}$.

        Therefore, we have that
\begin{align*}
    &||w_{\infty}(x,t_{c_{j}})||_{H^{\beta}}\geq ||T_{R_{j}}(w_{c_{j}}(x,t_{c_{j}}))||_{H^{\beta}}-\sum_{i\in\mathds{N},i\neq j}||T_{R_{i}}(w_{c_{i}}(x,t_{c_{j}}))||\\
    &-\sum_{i=0}^{\infty}||\tilde{w}_{j}(x,t)-\tilde{w}_{j-1}(x,t)-T_{R_{j}}(w_{c_{j}}(x,t))||_{H^{\beta}}\geq 2^{c_{j}}-\sum_{i\in\mathds{N},i\neq j}||T_{R_{i}}(w_{c_{i}}(x,t_{c_{j}}))||-\epsilon' 
\end{align*}
      But
      $$\sum_{i=1}^{j-1}||T_{R_{i}}(w_{c_{i}}(x,t_{c_{j}}))||_{H^{\beta}}\leq 2\sum_{i=1}^{j-1}||T_{R_{i}}(w_{c_{i}}(x,t_{c_{j}}))||_{H^{6}}\leq S_{j-1}\leq 2^{c_{j}-2}$$
      $$\sum_{i=j+1}^{\infty}||T_{R_{i}}(w_{c_{i}}(x,t_{c_{j}}))||_{H^{\beta}}\leq \sum_{i=j+1}^{\infty}2^{-c_{i}}\leq 1$$
      so
      $$||w_{\infty}(x,t_{c_{j}})||_{H^{\beta}}\geq 2^{c_{j}-1}-1-\epsilon'$$
      and we are done.
\end{proof}

\section*{Acknowledgements}
This work is supported in part by the Spanish Ministry of Science
and Innovation, through the “Severo Ochoa Programme for Centres of Excellence in R$\&$D (CEX2019-000904-S)” and 114703GB-100. DC and LMZ were partially supported by the ERC Advanced Grant 788250. 

\bibliographystyle{alpha}

\end{document}